\newtheorem{theorem}{Theorem}
\newtheorem{lemma}[theorem]{Lemma}
\newtheorem{corollary}[theorem]{Corollary}
\newtheorem{proposition}[theorem]{Proposition}
\newtheorem{remark}{\sl Remark}
\newcommand{\Email}[1]{E-mail: \href{mailto:#1}{\textsf{#1}}}
\def\cprime{$'$}
\newcommand{\be}[1]{\begin{equation}\label{#1}}
\newcommand{\ee}{\end{equation}}
\renewcommand{\(}{\left(}
\renewcommand{\)}{\right)}
\newcommand{\R}{{\mathbb R}}
\newcommand{\irdx}[1]{\int_{\R^d}{#1}\,dx}
\newcommand{\irdxi}[1]{\int_\R{#1}\,dx}
\newcommand{\irdv}[1]{\int_{\R^d}{#1}\,dv}
\newcommand{\irdxv}[1]{\iint_{\R^d\times\R^d}#1\,dx\,dv}
\newcommand{\irdmu}[1]{\iint_{\R^d\times\R^d}#1\,d\mu}
\newcommand{\ir}[1]{\int_\R{#1}\,dx}
\newcommand{\nrm}[2]{\left\|{#1}\right\|_{#2}}
\newcommand{\scalar}[2]{\left\langle{#1},{#2}\right\rangle}
\newcommand{\nH}{\|}
\newcommand{\nHH}{\big\|^2}
\newcommand{\ve}{\varepsilon}
\newcommand{\veta}{\eta}
\newcommand{\ird}[1]{\int_{\R^d}{#1}\,dx}
\newcommand{\irh}[1]{\int_{\R^d}{#1}\,dv}
\newcommand{\sfA}{\mathsf A}
\newcommand{\sfD}{\mathsf D_\delta}
\newcommand{\sfH}{\mathsf H_\delta}
\newcommand{\sfL}{\mathsf L}
\newcommand{\sfPi}{\mathsf\Pi}
\newcommand{\sfT}{\mathsf T}
\newcommand{\Id}{\mathrm{Id}}
\renewcommand{\aa}{\mathsf s}
\newcommand{\irdM}[1]{\int_{\R^d}{#1}\,\mathcal M\,dv}
\newcounter{Hequation}
\g@addto@macro\equation{\stepcounter{Hequation}}\makeatother
\title[Linearized Vlasov-Poisson-Fokker-Planck system]{$\mathrm L^2$-Hypocoercivity and large time asymptotics of the linearized Vlasov-Poisson-Fokker-Planck system}
\author[L.~Addala, J.~Dolbeault, X.~Li, M.L.~Tayeb]{Lanoir Addala, Jean Dolbeault, Xingyu Li, and M. Lazhar Tayeb}
\date{\today -- File: \textsf{VPFP-ADLT-2021.tex}}
\begin{document}
\begin{abstract} This paper is devoted to the linearized Vlasov-Poisson-Fokker-Planck system in presence of an external potential of confinement. We investigate the large time behaviour of the solutions using hypocoercivity methods and a notion of scalar product adapted to the presence of a Poisson coupling. Our framework provides estimates which are uniform in the diffusion limit. As an application in a simple case, we study the one-dimensional case and prove the exponential convergence of the nonlinear Vlasov-Poisson-Fokker-Planck system without any small mass assumption. \end{abstract}
\keywords{Vlasov equation, Fokker-Planck operator, Poisson coupling, electrostatic forces, confinement, Vlasov-Poisson-Fokker-Planck system, convergence to equilibrium, large-time behavior, rate of convergence, hypocoercivity, diffusion limit, drift-diffusion equations.
}

\subjclass[2010]{Primary: 82C40. Secondary: 35H10; 35P15; 35Q84; 35R09; 47G20; 82C21; 82D10; 82D37.}
\maketitle
\thispagestyle{empty}

\section{Introduction and main results}\label{Sec:Intro}

The \emph{Vlasov-Poisson-Fokker-Planck system} in presence of an external potential $V$ is
\be{Syst:VPFP}\tag{VPFP}
\begin{array}{c}\displaystyle
\partial_tf+v\cdot\nabla_xf-\(\nabla_xV+\nabla_x\phi\)\cdot\nabla_vf=\Delta_vf+\nabla_v\cdot(v\,f)\,,\\[4pt]
\displaystyle-\Delta_x\phi=\rho_f=\irdv f\,.
\end{array}
\ee
In this paper, we shall assume that $(t,x,v)\in\R^+\times\R^d\times\R^d$ and that $\phi$ is a \emph{self-consistent potential} corresponding to repulsive electrostatic forces and that $V$ is a \emph{confining potential} in the sense that~\eqref{Syst:VPFP} admits, up to a multiplicative constant, a unique stationary solution 
\[
f_\star(x,v)=e^{-V(x)-\phi_\star(x)}\,\mathcal M(v)\,,\quad-\Delta_x\phi_\star=e^{-V-\phi_\star}\quad\mbox{and}\quad\mathcal M(v)=\frac{e^{-\frac12\,|v|^2}}{(2\pi)^{d/2}}\,,
\]
with associated potential $\phi_\star$ according, \emph{e.g.}, to~\cite{MR1994744,MR1677677}. We denote by $M=\irdxv{f_\star}$ the mass. System~\eqref{Syst:VPFP} is of interest for understanding the evolution of a system of charged particles with interactions of two different natures: a self-consistent, nonlinear interaction through the mean field potential~$\phi$ and collisions with a background inducing a diffusion and a friction represented by a Fokker-Planck operator acting on velocities. System~\eqref{Syst:VPFP} describes the dynamics of a plasma of Coulomb particles in a thermal reservoir (see for instance~\cite{MR961253}), but it has also been derived in stellar dynamics for gravitational models, as in~\cite{MR0031822}, in the case of an attractive mean field Newton-Poisson equation. Here we shall focus on the repulsive, electrostatic case. Applications range from plasma physics to semi-conductor modelling. A long standing open question is to get estimates on the rate of convergence to equilibrium in dimensions $d=2$ and $d=3$ for arbitrarily large initial data, away from equilibrium. We will not solve it here but, as an important step in this direction, we will establish a constructive estimate of the decay rate of the linearized problem, which provides us with an upper bound for the convergence rate of the nonlinear~\eqref{Syst:VPFP} problem. A technical but important issue is to decide how one should measure such a rate of relaxation. For this purpose, we introduce a norm which is adapted to the linearized problem and consistent with the \emph{diffusion limit}.

Let us consider the linearized problem around $f_\star$. Let $h$ be a function such that $f=f_\star\,(1+\veta\,h)$ with $\irdxv f=M$, that is, such that $\irdxv{h\,f_\star}=0$. The system~\eqref{Syst:VPFP} can be rewritten as
\[\hspace*{-5pt}
\begin{array}{c}\displaystyle
\partial_th+v\cdot\nabla_xh-\(\nabla_xV+\nabla_x\phi_\star\)\cdot\nabla_vh+v\cdot\nabla_x\psi_h-\Delta_vh+v\cdot\nabla_vh=\veta\(\nabla_x\psi_h\cdot\nabla_vh-v\cdot\nabla_x\psi_h h\)\\[4pt]
\displaystyle\mbox{with}\quad-\Delta_x\psi_h=\irdv{h\,f_\star}\,.
\end{array}
\]
At formal level, by dropping the $\mathcal O(\veta)$ term in the limit as $\eta\to0_+$, we obtain the \emph{linearized Vlasov-Poisson-Fokker-Planck system} around the equlibrium state $f_\star$ given by
\be{Syst:VPFPlin}
\begin{array}{c}\displaystyle
\partial_th+v\cdot\nabla_xh-\(\nabla_xV+\nabla_x\phi_\star\)\cdot\nabla_vh+v\cdot\nabla_x\psi_h-\Delta_vh+v\cdot\nabla_vh=0\,,\\[4pt]
\displaystyle-\Delta_x\psi_h=\irdv{h\,f_\star}\,,\quad\irdxv{h\,f_\star}=0\,.
\end{array}
\ee
{}From now on we shall say that $h$ has \emph{zero average} if $\irdxv{h\,f_\star}=0$. Let us define the norm
\be{Def:norm}
\nrm h{}^2:=\irdxv{h^2\,f_\star}+\irdx{|\nabla_x\psi_h|^2}\,.
\ee
Our main result is devoted to the large time behaviour of a solution of \emph{the linearized system}~\eqref{Syst:VPFPlin} on $\R^+\times\R^d\times\R^d\ni(t,x,v)$ with given initial datum $h_0$ at $t=0$. For simplicity, we shall state a result for a simple specific potential, but an extension to more general potentials will be given to the price of a rather long list of technical assumptions that are detailed in Section~\ref{Sec:FunctionalSetting}.
\begin{theorem}\label{Thm:Main} Let us assume that $d\ge1$, $V(x)=|x|^\alpha$ for some $\alpha>1$ and $M>0$. Then there exist two constants $\lambda>0$ and $\mathcal C>1$ such that any solution $h$ of~\eqref{Syst:VPFPlin} with an initial datum~$h_0$ of zero average with $\nrm{h_0}{}^2<\infty$ is such that
\be{Ineq:LinDecay}
\nrm{h(t,\cdot,\cdot)}{}^2\le\mathcal C\,\nrm{h_0}{}^2\,e^{-\lambda t}\quad\forall\,t\ge0\,.
\ee
\end{theorem}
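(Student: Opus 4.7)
The plan is to apply the Dolbeault--Mouhot--Schmeiser (DMS) hypocoercivity method, adapted to the Poisson coupling by working in the Hilbert space naturally associated with~\eqref{Def:norm}. First I would introduce the space $\mathcal H$ of zero-average functions equipped with the scalar product
\[
\scalar{h_1}{h_2} := \irdxv{h_1\,h_2\,f_\star} + \irdx{\nabla_x\psi_{h_1}\cdot\nabla_x\psi_{h_2}},
\]
so that $\nrm{h}{}^2 = \scalar{h}{h}$, and decompose the right-hand side of~\eqref{Syst:VPFPlin} as $\sfL h - \sfT h$, where $\sfL h = \Delta_v h - v\cdot\nabla_v h$ is the Fokker--Planck operator and $\sfT h = v\cdot\nabla_x h - (\nabla_x V + \nabla_x\phi_\star)\cdot\nabla_v h + v\cdot\nabla_x\psi_h$ is the transport-plus-Poisson part. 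A key preliminary is to check that $\sfL=\sfL^*\le 0$ and $\sfT=-\sfT^*$ in this scalar product; the antisymmetry of $\sfT$ is precisely where the Poisson-energy term in $\scalar\cdot\cdot$ plays its role, as it is equivalent to the conservation of $\scalar{h}{h}$ under $\partial_t h + \sfT h = 0$, which follows from integration by parts against $\rho_\star := e^{-V-\phi_\star}$ combined with the Poisson equation for $\psi_h$ and the fact that $\irdv{v\,f_\star}=0$.

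I would then establish the two coercivity estimates of the DMS framework. The projection onto $\ker\sfL$ is $\sfPi h := \irdM{h}$, and microscopic coercivity reduces to the Gaussian Poincar\'e inequality: $-\scalar{\sfL h}{h} = \irdxv{|\nabla_v h|^2\,f_\star} \ge \nrm{(\Id-\sfPi)h}{}^2$. For macroscopic coercivity, writing $u := \sfPi h$ and letting $\psi_u$ solve $-\Delta_x\psi_u = u\,\rho_\star$, a direct calculation gives $\sfT\sfPi h = v\cdot\nabla_x(u+\psi_u)$, which has zero $v$-average, so that $\nrm{\sfT\sfPi h}{}^2 = \irdx{|\nabla_x(u+\psi_u)|^2\,\rho_\star}$. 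The crucial inequality is then the Poisson-modified Poincar\'e inequality
\[
\irdx{|\nabla_x(u+\psi_u)|^2\,\rho_\star} \ge \lambda_M\left(\irdx{u^2\,\rho_\star} + \irdx{|\nabla_x\psi_u|^2}\right)
\]
for every $u$ with $\irdx{u\,\rho_\star}=0$. One expands the square, uses the identity $\irdx{|\nabla_x\psi_u|^2} = \irdx{u\,\psi_u\,\rho_\star}$, the weighted Poincar\'e inequality for $\rho_\star\,dx$ (available because $V=|x|^\alpha$ with $\alpha>1$ and $\phi_\star$ is a bounded perturbation), and analyses the cross-term $\irdx{\nabla_x u\cdot\nabla_x\psi_u\,\rho_\star}$ to extract a positive constant $\lambda_M$.

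With both coercivity bounds in hand, I would introduce the DMS twist
\[
\sfA := \bigl(\Id + (\sfT\sfPi)^*(\sfT\sfPi)\bigr)^{-1}(\sfT\sfPi)^*, \qquad \sfH[h] := \tfrac12\,\nrm{h}{}^2 + \delta\,\scalar{\sfA h}{h},
\]
equivalent to $\tfrac12\nrm{\cdot}{}^2$ for $\delta>0$ small. Differentiating along~\eqref{Syst:VPFPlin} gives
\[
\tfrac{d}{dt}\sfH[h] = \scalar{\sfL h}{h} - \delta\,\scalar{\sfA\sfT\sfPi h}{h} - \delta\,\scalar{\sfA\sfT(\Id-\sfPi)h}{h} + \delta\,\scalar{\sfT\sfA h}{h} + \delta\,\scalar{\sfA\sfL h}{h} + \delta\,\scalar{\sfA h}{\sfL h},
\]
in which $-\delta\,\scalar{\sfA\sfT\sfPi h}{h}$ is bounded above by $-\delta\,\tfrac{\lambda_M}{1+\lambda_M}\,\nrm{\sfPi h}{}^2$ via the spectral calculus of $(\sfT\sfPi)^*(\sfT\sfPi)$, while the remaining $\delta$-terms involve only $(\Id-\sfPi)h$ (using $\sfPi\sfT\sfPi h = 0$, so that $\sfA h$ depends solely on $(\Id-\sfPi)h$) and are controlled by $C\,\nrm{(\Id-\sfPi)h}{}^2$ thanks to the boundedness of $\sfA$, $\sfA\sfL$, $\sfA\sfT(\Id-\sfPi)$ and $\sfT\sfA$. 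Choosing $\delta$ small enough, microscopic coercivity absorbs the errors and yields $\tfrac{d}{dt}\sfH[h] \le -\lambda\,\sfH[h]$; Gr\"onwall's lemma and the norm equivalence then deliver~\eqref{Ineq:LinDecay}.

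I expect the main obstacle to be the Poisson-modified Poincar\'e inequality above: the classical Poincar\'e inequality for $\rho_\star$ controls $\irdx{u^2\rho_\star}$ but not $\irdx{|\nabla_x\psi_u|^2}$, so one has to exploit the precise structure of $\nabla_x(u+\psi_u)$ without losing the sign of the cross-term, which is exactly why the scalar product in~\eqref{Def:norm} must be chosen as stated. A secondary difficulty is establishing the operator bounds on $\sfA$, $\sfA\sfL$, $\sfA\sfT(\Id-\sfPi)$ and $\sfT\sfA$ used to close the estimate; this should rest on functional-calculus identities for $(\sfT\sfPi)^*(\sfT\sfPi)$ together with elliptic regularity for the Poisson equation with weight $\rho_\star$.
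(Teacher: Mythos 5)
Your plan is exactly the one the paper follows: work in the Hilbert space built from the quadratic form $\nrm{h}{}^2 = \irdxv{h^2 f_\star} + \irdx{|\nabla_x\psi_h|^2}$ (the free-energy Hessian at $f_\star$), verify that $\sfL$ and $\sfT$ are respectively symmetric and antisymmetric for that scalar product, identify $\sfPi h = \irdv{h\,\mathcal M}$, and then run the Dolbeault--Mouhot--Schmeiser machinery with the twist $\sfA=(\Id+(\sfT\sfPi)^*\sfT\sfPi)^{-1}(\sfT\sfPi)^*$, checking~\eqref{H1}--\eqref{H4}. The coercivity inequality you call the ``Poisson-modified Poincar\'e inequality'' is Lemma~\ref{Lem:MacroscopicCoercivity}; the paper obtains it not by expanding $|\nabla_x(u+\psi_u)|^2$ and tracking the sign of $\irdx{\nabla_x u\cdot\nabla_x\psi_u\,\rho_\star}$ (which is \emph{not} sign-definite), but by applying the Poincar\'e inequality of Lemma~\ref{Lem:Poincare} directly to $w=u+\psi_u$ minus its $\rho_\star$-mean, after which the only cross-term one needs is the nonnegative $\irdx{u\,\psi_u\,\rho_\star}=\irdx{|\nabla_x\psi_u|^2}$; you may want to adopt that route to avoid the sign issue you flagged.

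One substantive caveat: you describe the operator bound~\eqref{H4}, i.e.\ $\|\sfA\sfT(\Id-\sfPi)h\nH\le C_M\,\|(\Id-\sfPi)h\nH$, as a ``secondary difficulty'' handled by ``functional calculus plus elliptic regularity,'' but this is in fact the main technical effort of the paper (Lemma~\ref{T(1moinsPi)}, five steps). It requires second-order weighted elliptic estimates on the resolvent equation $g-\Delta_x w_g+\nabla_xW_\star\cdot\nabla_x w_g=h$, a Bochner--Lichnerowicz--Weitzenb\"ock identity (Lemma~\ref{Lem:BLW}) to control $\irdx{|\mathrm{Hess}(w_g)|^2\rho_\star}$, and a hierarchy of weighted Poincar\'e inequalities and pointwise bounds on $W_\star=V+\phi_\star$ (Corollaries~\ref{Cor:Poincare}--\ref{Cor:Poincare2}, Lemmas~\ref{Lem:PoincareLambda}--\ref{Lem:PoincareLambda3}) under the growth hypotheses~\eqref{Hyp-V3b}--\eqref{Hyp-V8}, all of which are checked for $V(x)=|x|^\alpha$, $\alpha>1$. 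Also note that in the final absorption step the cross-term $\delta\,C_M\,\|(\Id-\sfPi)h\nH\,\|\sfPi h\nH$ is controlled by a Young inequality balanced against \emph{both} the microscopic coercivity in $\|(\Id-\sfPi)h\nH^2$ and the macroscopic contribution $\frac{\delta\lambda_M}{1+\lambda_M}\|\sfPi h\nH^2$, not by microscopic coercivity alone.
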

The constant $\mathcal C$ in Theorem~\ref{Thm:Main} is larger than $1$ as a typical result of hypocoercivity methods. Indeed, since the Fokker-Planck operator acts only on the velocity variable $v$, an exponential decay with $\mathcal C=1$ cannot be expected for generic $x$-dependent functions. The main novelty here is that hypocoercive estimates can be obtained in presence of the non-local Poisson coupling in~\eqref{Syst:VPFPlin}, and not simply in some perturbative regime. The linearized problem~\eqref{Syst:VPFPlin} is at first sight easier than the full nonlinear system~\eqref{Syst:VPFP} but our result gives two crucial informations which are of importance for the linearized system as well as for the nonlinear one: 1) we prove an exponential decay rate, 2) we specify an appropriate functional space and a notion of distance, corresponding to the norm defined by~\eqref{Def:norm}, for measuring the convergence to equilibrium.

Our analysis is consistent with the \emph{diffusion limit} of the linearized system, as we shall explain below. For any $\ve>0$, we consider the solution of the \emph{linearized problem in the parabolic scaling} given by
\be{Syst:VPFPlinParab}
\begin{array}{c}\displaystyle
\ve\,\partial_th+v\cdot\nabla_xh-\(\nabla_xV+\nabla_x\phi_\star\)\cdot\nabla_vh+v\cdot\nabla_x\psi_h-\frac1\ve\,\big(\Delta_vh-v\cdot\nabla_vh\big)=0\,,\\[4pt]
\displaystyle-\Delta_x\psi_h=\irdv{h\,f_\star}\,,\quad\irdxv{h\,f_\star}=0\,.
\end{array}
\ee
In the regime $\ve\to0_+$, the parabolic scaling corresponds to a time scale of the order $\ve^{-1}$ with a collision frequency which is also of the order $\ve^{-1}$, but cannot be achieved by a simple scaling in the $(t,x,v)$ variables in presence of an external potential and a Stokes friction force. Our decay estimate is uniform with respect to $\ve\to0_+$.
\begin{theorem}\label{Thm:DiffusionLimit} Let us assume that $d\ge1$, $V(x)=|x|^\alpha$ for some $\alpha>1$ and $M>0$. For any $\ve>0$ small enough, there exist two constants $\lambda>0$ and $\mathcal C>1$, which do not depend on $\ve$, such that any solution $h$ of~\eqref{Syst:VPFPlinParab} with an initial datum~$h_0$ of zero average and such that $\nrm{h_0}{}^2<\infty$ satisfies~\eqref{Ineq:LinDecay}.\end{theorem}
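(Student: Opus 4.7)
\medskip\noindent\textbf{Proof plan.} The strategy is to re-run the $\mathsf L^2$-hypocoercivity scheme used for Theorem~\ref{Thm:Main}, while tracking the parameter $\ve$ and absorbing every factor of $1/\ve$ into a carefully chosen weight. Write~\eqref{Syst:VPFPlinParab} in the abstract form
\[
\ve\,\partial_t h+\sfT h=\tfrac1\ve\,\sfL h\,,
\]
where $\sfL h=\Delta_v h-v\cdot\nabla_v h$ is the velocity Fokker--Planck operator (coercive on $(\mathrm{Id}-\sfPi)h$, with $\sfPi$ the orthogonal projector on $\ker\sfL$ for the adapted scalar product), and $\sfT h=v\cdot\nabla_x h-(\nabla_xV+\nabla_x\phi_\star)\cdot\nabla_v h+v\cdot\nabla_x\psi_h$ is the transport-plus-Poisson operator, which is antisymmetric in the scalar product underlying $\nrm\cdot{}$ built from~\eqref{Def:norm}. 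This antisymmetry — the central structural ingredient of Theorem~\ref{Thm:Main}, hinging on the Poisson term $\int|\nabla_x\psi_h|^2$ in the norm — is preserved by the parabolic scaling since $\sfT$ is unchanged.

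\medskip\noindent The next step is to use the same auxiliary operator $\sfA=(\Id+(\sfT\sfPi)^*\sfT\sfPi)^{-1}(\sfT\sfPi)^*$ of Dolbeault--Mouhot--Schmeiser type, and to define the $\ve$-modified entropy
\[
\sfH_\delta[h]:=\tfrac12\,\nrm h{}^2+\ve\,\delta\,\langle\sfA h,h\rangle\,.
\]
The scaling $\delta\mapsto\ve\,\delta$ is crucial: differentiating $\sfH_\delta$ along~\eqref{Syst:VPFPlinParab} gives
\[
\tfrac{d}{dt}\sfH_\delta=\tfrac1{\ve^2}\,\langle\sfL h,h\rangle-\tfrac\delta\ve\,\langle\(\sfA\sfT+\sfT^*\sfA\)h,h\rangle+\tfrac{\delta}{\ve}\,\langle\(\sfA\sfL+\sfL\sfA^*\)h,h\rangle\cdot\tfrac1\ve\cdot\ve\,,
\]
and the choice $\delta_\ve=\ve\,\delta_0$ exactly cancels the dangerous $\ve^{-1}$ prefactor in the $\sfA\sfT\sfPi$ term, producing a macroscopic dissipation of fixed size $-\delta_0\,\langle\sfA\sfT\sfPi h,h\rangle$ that controls the Poisson-enhanced macroscopic norm uniformly in $\ve$. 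Meanwhile the microscopic dissipation from $\sfL$ carries a prefactor $\ve^{-2}$, which is more than sufficient to absorb the residual cross terms $\langle\sfA\sfT(\Id-\sfPi)h,h\rangle$, $\langle\sfT\sfA h,h\rangle$ and the commutator-type terms $\langle\sfA\sfL h,h\rangle$ via Cauchy--Schwarz, because the operators $\sfA$, $\sfA\sfT(\Id-\sfPi)$, $\sfA\sfL$ and $\sfT\sfA$ have the same uniform bounds established for Theorem~\ref{Thm:Main}.

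\medskip\noindent For $\delta_0$ small enough and $\ve$ small enough, $\sfH_{\ve\delta_0}$ remains equivalent to $\nrm h{}^2$ with constants independent of $\ve$ (since $\ve\,\delta_0\,|\langle\sfA h,h\rangle|\le\ve\,\delta_0\,\nrm h{}^2$), and the previous computation yields
\[
\tfrac{d}{dt}\sfH_{\ve\delta_0}\le-\lambda\,\nrm h{}^2\le-\lambda'\,\sfH_{\ve\delta_0}\,,
\]
with $\lambda,\lambda'>0$ independent of $\ve$. A Gronwall argument then delivers~\eqref{Ineq:LinDecay} with an $\ve$-independent rate $\lambda$ and constant $\mathcal C$. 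The main obstacle, as usual in DMS-type arguments combined with a non-local coupling, is to verify the uniform boundedness of $\sfA$, $\sfA\sfT(\Id-\sfPi)$ and $\sfA\sfL$ and the coercivity of $\sfA\sfT\sfPi$ in the $\|\cdot\|$-norm when $\sfPi$ acts on functions whose macroscopic data include the Poisson potential $\psi_h$; but these are precisely the estimates obtained in the proof of Theorem~\ref{Thm:Main}, and they carry over unchanged because the operators $\sfT$, $\sfL$, $\sfPi$, $\sfA$ and the scalar product do not depend on $\ve$.
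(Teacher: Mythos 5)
Your proposal is correct and follows essentially the same route as the paper's Section~\ref{Sec:DiffusionLimit}: recast~\eqref{Syst:VPFPlinParab} in the abstract form, scale the twist parameter proportionally to $\ve$ in the Lyapunov functional $\sfH$, so that the $\sfA\sfT\sfPi$ term yields an $\ve$-independent macroscopic dissipation while the remaining $\ve^{-1}$-weighted cross term ($\sfA\sfL$) is absorbed by the $\ve^{-2}$-weighted microscopic dissipation. The paper phrases this as replacing $\lambda_m$ by $\lambda_m/\ve$ and $C_M$ by $C_M/\ve$ in Proposition~\ref{DMS} and observing that the admissible $\delta$ shrinks to $O(\ve)$, giving $\delta=\zeta\ve$ and decay rate $\eta$ independent of $\ve$ -- the same mechanism you describe (though your displayed time-derivative formula is garbled; the clean version is the paper's $\mathsf D_{\delta,\ve}$).
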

The result of Theorem~\ref{Thm:Main} will be extended in Theorem~\ref{Thm:MainBis} to a larger class of external potentials~$V$: in the technical part of the proof of Theorem~\ref{Thm:Main}, we will specify precise but more general conditions under which the same result holds. A similar extension applies in the case of Theorem~\ref{Thm:DiffusionLimit}. As an application of our method, we establish the exponential rate of convergence of the solution of \emph{the non-linear system}~\eqref{Syst:VPFP} when $d=1$. For sake of simplicity, we state the result for the same potential $V$ as in Theorem~\ref{Thm:DiffusionLimit}.
\begin{corollary}\label{Cor:d=1} Assume that $d=1$, $V(x)=|x|^\alpha$ for some $\alpha>1$ and $M>0$. If $f$ solves~\eqref{Syst:VPFP} with initial datum~$f_0=(1+h_0)\,f_\star$ such that $h_0$ has zero average, $\nrm{h_0}{}^2<\infty$ and $(1+h_0)\ge0$, then~\eqref{Ineq:LinDecay} holds with $h=f/f_\star-1$ for some constants~$\lambda>0$ and $\mathcal C>1$.\end{corollary}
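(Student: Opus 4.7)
The plan is to view~\eqref{Syst:VPFP} as a perturbation of its linearization~\eqref{Syst:VPFPlin}, with Theorem~\ref{Thm:Main} (in $d=1$) serving as the linear decay engine. Writing $f=f_\star(1+h)$, the perturbation $h$ satisfies~\eqref{Syst:VPFPlin} with an additional nonlinear term $N(h)=\psi_h'\,\partial_v h-v\,\psi_h'\,h$ on the right-hand side. The identity $\partial_v f_\star=-v\,f_\star$ yields the conservative form
\[
N(h)\,f_\star=\partial_v\bigl(\psi_h'\,h\,f_\star\bigr),
\]
which is crucial in order to avoid losing a $v$-derivative when pairing $N(h)$ against a test function in $L^2(f_\star\,dx\,dv)$.

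The decisive one-dimensional feature is a uniform-in-time $L^\infty$ bound on $\psi_h'$. Since $1+h_0\ge 0$ forces $f_0\ge 0$, positivity is preserved by~\eqref{Syst:VPFP}, and mass conservation together with $\irdxv{f_\star}=M$ gives $\|\rho_h(t,\cdot)\|_{L^1(\R)}\le 2\,M$. In $d=1$ Poisson's equation inverts as an antiderivative, and the zero-mean condition on $h$ produces the uniform bound $\|\psi_h'(t,\cdot)\|_{L^\infty(\R)}\le 2\,M$ for every $t\ge 0$. I would then differentiate along the nonlinear flow the modified hypocoercive functional $\mathcal H[h]$ built in the proof of Theorem~\ref{Thm:Main}, so as to obtain a relation of the form
\[
\frac{d}{dt}\mathcal H[h]\le -\lambda\,\mathcal H[h]-c\irdxv{|\partial_v h|^2\,f_\star}+R[h],
\]
where $R[h]$ collects the pairings of $N(h)$ with $h$ and with the auxiliary function $\sfA h$. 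Thanks to the conservative form, each such pairing reduces to $-\int (\partial_v\phi)\,\psi_h'\,h\,f_\star\,dx\,dv$ with $\phi\in\{h,\sfA h\}$ and is bounded by $2\,M\,\mathcal H[h]^{1/2}\,\bigl(\irdxv{|\partial_v\phi|^2\,f_\star}\bigr)^{1/2}$; Young's inequality absorbs the dissipative factor into the Fokker--Planck term $c\irdxv{|\partial_v h|^2\,f_\star}$ already present, leaving only a residual of the form $C(M)\,\mathcal H[h]$.

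The hard part is to close the resulting inequality $\frac{d}{dt}\mathcal H[h]\le(-\lambda+C(M))\,\mathcal H[h]$ without imposing any smallness on $M$ or on $\|h_0\|$. If the explicit constants of Theorem~\ref{Thm:Main} can be tracked carefully enough to ensure $\lambda>C(M)$, Gronwall's lemma closes the argument at once. Otherwise one complements this $L^2$ bookkeeping by the monotone dissipation of the nonlinear free energy
\[
\mathcal F[f|f_\star]=\irdxv{\bigl[(1+h)\log(1+h)-h\bigr]f_\star}+\tfrac12\irdx{|\psi_h'|^2},
\]
which decreases along~\eqref{Syst:VPFP}. A LaSalle-type compactness argument specific to $d=1$ forces $\mathcal F[f(t)|f_\star]\to 0$, hence $\mathcal H[h(t)]\to 0$, so that from some sufficiently large time $t_0$ onwards the residual $R[h]$ becomes genuinely perturbative and the $L^2$ scheme yields exponential decay on $[t_0,+\infty)$. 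Combined with a crude growth control of $\mathcal H[h]$ on $[0,t_0]$, this yields~\eqref{Ineq:LinDecay} with constants $\lambda>0$ and $\mathcal C>1$ possibly worse than those of the linearized theorem but still finite, thereby completing the proof.
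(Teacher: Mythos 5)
Your proposal follows the same overall strategy as the paper: write the nonlinear $d=1$ problem as the linearized system plus a perturbation $\mathsf Q[h]=\psi_h'(\partial_v h - v\,h)$, differentiate the hypocoercive Lyapunov functional $\sfH$ along the flow, control the extra pairings $\scalar{\mathsf Q[h]}h$, $\scalar{\mathsf Q[h]}{\sfA h}$, $\scalar{\sfA\mathsf Q[h]}h$ via the $\mathrm L^\infty$ bound on $\psi_h'$ and the already-available Fokker--Planck dissipation, and close the quadratic-form estimate once the perturbation is small. The conservative form $\mathsf Q[h]f_\star=\partial_v(\psi_h'\,h\,f_\star)$ that you highlight is essentially what the paper exploits implicitly, together with the observation $\int_\R(\partial_v h-v\,h)\,\mathcal M\,dv=0$ and the improved Poincar\'e inequality $\|v\,h\|^2\le 2(d+2)\,\|\nabla_v h\|^2$.

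The one place where you should commit is the dichotomy you raise at the end. Your ``alternative (a)'' --- tracking constants to force $\lambda>C(M)$ for arbitrary mass --- is not viable: with only the crude bound $\|\psi_h'\|_{\mathrm L^\infty}\le 2M$, the residual constant scales with $M$ and cannot be beaten by the linear rate $\lambda$, which does not improve for large $M$. The paper commits to your ``alternative (b)'', but with a cleaner quantitative input than a pure LaSalle argument: Lemma~\ref{Lem:CK} shows, via Jensen plus the Csisz\'ar--Kullback--Pinsker inequality, that
\[
\|\psi_h'(t,\cdot)\|_{\mathrm L^\infty(\R)}^2\le 4\,M\iint_{\R\times\R}f\log\Big(\frac f{f_\star}\Big)\,dx\,dv\,,
\]
and the right-hand side is the nonincreasing free energy, which tends to $0$ as $t\to+\infty$ by~\cite[Theorem~B]{MR1306570} combined with the logarithmic Sobolev inequality. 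This gives $\|\psi_h'(t,\cdot)\|_{\mathrm L^\infty}\to 0$ directly, so $\epsilon(t):=c\,\|\psi_h'(t,\cdot)\|_{\mathrm L^\infty}$ enters the quadratic form $(X,Y)\mapsto(\lambda_m-\delta)X^2+\tfrac{\delta\lambda_M}{1+\lambda_M}Y^2-\delta C_M XY-\tfrac\lambda2(X^2+Y^2)-\tfrac\lambda2\delta XY-\epsilon X(X+2Y)$ as a vanishing perturbation, and the argument of Proposition~\ref{DMS} closes for $t\ge t_0$; a crude bound on $[0,t_0]$ is absorbed into the prefactor $\mathcal C$. In short, your outline is correct provided you discard option (a) and make (b) quantitative along the lines of CKP plus free-energy monotonicity.
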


\medskip The \emph{diffusion limit} of systems of kinetic equations in presence of electrostatic forces has been studied in many papers. The mathematical results go back at least to the study of a model for semi-conductors involving a linear Boltzmann kernel by F.~Poupaud in~\cite{MR1127004}. The case of a Fokker-Planck operator in dimension $d=2$ was later studied by F.~Poupaud and J.~Soler in~\cite{MR1780148}, and by T.~Goudon in~\cite{MR2139941}, on the basis of the existence results of~\cite{MR771811,MR1052014}. There is also a parallel, probabilistic approach of the macroscopic diffusion limit and of the overdamped regime for the generalized Langevin equation: see~\cite{Iacobucci_2017,Pavliotis_2021} and references therein. With a self-consistent Poisson coupling, we refer to~\cite{MR1200643} for existence results in dimension $d=3$ and to~\cite{MR1058151,MR1677677} for steady states, confinement and related issues. Based on free energy considerations introduced in~\cite{MR1306570,MR1677677}, N.~El Ghani and N.~Masmoudi were able in~\cite{MR2664460} to establish diffusion limits also when $d=3$. Altogether, it is proved in dimensions $d=2$ and $d=3$ that the Vlasov-Poisson-Fokker-Planck system, with parameters corresponding to the parabolic scaling,
\be{VPFPeps}
\ve\,\partial_tf+v\cdot\nabla_xf-\(\nabla_xV+\nabla_x\phi\)\cdot\nabla_vf=\frac1\ve\(\Delta_vf+\nabla_v\cdot(v\,f)\)\,,\quad-\Delta_x\phi=\rho_f=\irdv f\,,
\ee
has a weak solution $\big(f^\ve,\phi^\ve\big)$ which converges as $\ve\to0_+$ to $\big(f^0=\rho\,\mathcal M,\phi\big)$ where $\mathcal M(v)=(2\pi)^{-d/2}\,\exp(-|v|^2/2)$ is the normalized Maxwellian function and where the charge density $\rho=\irdv{f^0}$ is a weak solution of the \emph{drift-diffusion-Poisson} system
\be{ddP}
\frac{\partial\rho}{\partial t}=\nabla_x\cdot\big(\nabla_x\rho+\rho\,\nabla_x(V+\phi)\big)\,,\quad-\Delta_x\phi=\rho\,.
\ee
Another piece of information is the asymptotic behavior of the solutions of~\eqref{ddP} for large times. As $t\to+\infty$, it is well known (see for instance~\cite{MR2110936} in the case of a bounded domain,~\cite{AMTU} in the Euclidean case when $V(x)=|x|^2$, and~\cite{MR1777308} in $\R^d$ with a confining external potential $V$ for any $d\ge3$) that the solution of~\eqref{ddP} converges to a steady state $(\rho_\star,\phi_\star)$ given by
\be{Eqn:Poisson-Boltzmann}
-\Delta_x\phi_\star=\rho_\star=e^{-V-\phi_\star}
\ee
at an exponential rate. The optimal asymptotic rates have been characterized recently in~\cite{Nernst-Planck} using the linearized drift-diffusion-Poisson system and a norm which involves the Poisson potential. Apart the difficulty arising from the self-consistent potential, the technique is based on relative entropy methods, which are by now standard in the study of large time asymptotics of drift-diffusion equations.

Our goal is to study both regimes $\ve\to0_+$ and $t\to+\infty$ simultaneously. More precisely, we aim at proving that each solution $\big(f^\ve,\phi^\ve\big)$ of~\eqref{VPFPeps} converges to $(f_\star,\phi_\star)$ as $t\to+\infty$ in a weighted $\mathrm L^2$ sense at an exponential rate which is uniform in $\ve>0$, small. In the present paper, we will focus on a linearized regime in any dimension and obtain an estimate of the decay rate in the asymptotic regime. This allows us to obtain an asymptotic decay rates in the non-linear regime when $d=1$, but so far not in higher dimensions because we are still lacking some key estimates. Compared to the large time asymptotics of~\eqref{ddP}, the study of the convergence rate of the solution of~\eqref{VPFPeps} or, in the case $\ve=1$, of the decay rate of the solution of~\eqref{Syst:VPFPlin}, is much more difficult because the diffusion only acts on the velocities and requires the use of hypocoercive methods.

\medskip T.~Gallay coined the word \emph{hypocoercivity} in the context of convergence without regularization as opposed to hypoellipticity where both properties arise simultaneously. This concept is well adapted to kinetic equations with general collision kernels and C.~Villani made the \emph{hypocoercivity} very popular in kinetic theory: see~\cite{MR2275692,MR2562709}. Understanding the large time behavior of the kinetic Fokker-Planck equation (without Poisson coupling) is an interesting problem which has a long history: see~\cite{MR1503147,MR0222474,MR0168018,MR1969727,MR2034753} for some earlier contributions. C.~Villani~\cite{MR2562709} proved convergence results in various senses: in $\mathrm H^1$~\cite[Theorem~35]{MR2562709}, in $\mathrm L^2$~\cite[Theorem~37]{MR2562709}, and in entropy~\cite[Theorem~39]{MR2562709} when $\mathrm{Hess}(V)$ is bounded. His approach is however inspired by hypoelliptic methods, as in~\cite{Herau,MR2294477,mouhot2006quantitative}. The method of~\cite{arnold2014sharp} is based on a spectral decomposition and produces an exponential decay in relative entropy with a sharp rate. In a somewhat similar spirit, we can also quote~\cite{BDMMS}, which is based on a Fourier decomposition. Due to the Fokker-Planck operator, smoothing effects in~\eqref{VPFPeps} can be expected as was proved in~\cite{MR1355890}, consistently with hypoelliptic methods: this will not be exploited in the present paper.

Mean-field couplings add a serious difficulty: see~\cite{guillin:hal-02387517,Lu_2019} for recent results based on a probabilistic approach. In presence of a Poisson coupling large time behavior (without rates) of the solutions of~\eqref{VPFPeps} has been dealt with in presence of or without an external potential: \emph{cf.}~\cite{MR1306570,MR1414375,MR1677677,carpio1998long,Kagei_2001} for early results. In~\cite{Ju_Hwang_2013}, a result of exponential decay is obtained in dimension $d=3$, in presence of a constant neutralizing background but without confinement: the solution is a smooth perturbation of a stationary distribution function which is homogeneous in $x$ and Maxwellian in $v$ and the proof relies on remarkable algebraic properties. When $d=2$ and $d=3$, F.~H\'erau and L.~Thomann~\cite{MR3522010} proved the trend to the equilibrium for the Vlasov-Poisson-Fokker-Planck system with a small nonlinear term but with a possibly large exterior confining potential. More recently, M.~Herda and M.~Rodrigues considered in~\cite{MR3767000} the two limits as $\ve\to0_+$ and $t\to+\infty$, on the $2$-dimensional torus, in the globally neutral regime. By a careful analysis of the trade-off between two parameters, the mean free path and the Debye length, they establish closed estimates of regularity which allow them to prove an exponential convergence, including in various limiting regimes, with uniform estimates in the other, fixed parameters. All these approaches are essentially of perturbative nature. In various papers, the properly linearized system~\eqref{Syst:VPFPlinParab} is not taken into account, in the sense that the non-local term arising from the Poisson equation is often dropped. In the case of a torus and without an external potential, the Landau damping provides another mechanism of convergence to equilibrium even without a Fokker-Planck kernel: we refer to~\cite{Bedrossian_2017} for a detailed study by J.~Bedrossian on the enhancement induced by the Fokker-Planck operator acting on velocities and also to a result of I.~Tristani in~\cite{Tristani_2017} for the analysis of the consequences of the Landau damping on the (properly) linearized Vlasov–Poisson-Fokker-Planck system. So far it is not known how these properties could be extended from the setting of a torus to the case of the whole Euclidean space in presence of an external potential of confinement. Let us emphasize that, in the present paper, we consider the properly linearized system, including the non-local Poisson term, and provide a functional framework which is compatible with hypocoercivity methods adapted to diffusion limits.

The existence of solutions of~\eqref{Syst:VPFPlin}, which are continous w.r.t.~$t$ and take values in $\mathrm L^2$ for the norm defined by~\eqref{Def:norm}, is out of the scope of this paper. Seen as a perturbation of~\eqref{Syst:VPFP}, an existence result can be deduced from the results of~\cite{MR1052014,MR1200643} or established directly using the same methods as in these papers and we will consider it as granted. Alternatively, it is also possible to consider the non-local term as perturbation and use a fixed point argument based on the semi-group associated to the Fokker–Planck operator as, \emph{e.g.}, in~\cite{MR3522010}.

In~\cite{MR3324910}, J.~Dolbeault, C.~Mouhot, and C.~Schmeiser studied the exponential decay in a modified $\mathrm L^2$ norm for the Vlasov-Fokker-Planck equation (and also for a larger class of linear kinetic equations). The method was motivated by the results of~\cite{Herau} but the main source of inspiration came from the analysis of the diffusion limit, as in~\cite{MR2082927,MR2299429,MR2338354} (also see~\cite{MR3535871} in presence of an oscillating external force field): the general idea is to build a norm which reflects the spectral gap that determines the rate of convergence in~\eqref{ddP} by adding a twist which arises from the coercivity properties, at macroscopic level, of the diffusion limit. Applying~\cite{MR3324910} to~\eqref{Syst:VPFPlin} is a natural idea, which is mentioned for instance in~\cite[p.~109]{Tristani_2017}, but has not been done yet to our knowledge. Inspired by~\cite{MR2481073,MR2726546,1004}, another idea emerged that asymptotic rates of convergence should be measured in a norm induced by a Taylor expansion of the entropy around the asymptotic state and that, in presence of a Poisson coupling, this norm should involve a non-local term: see~\cite{MR3196188,Nernst-Planck,li:hal-02143985} for drift-diffusion systems and Remark~\ref{Rem:FreeEnergy} when applied to~\eqref{Syst:VPFP}. The goal of this paper is to mix these two ideas. It turns out that they combine into a beautiful machinery.

\medskip This paper is organized as follows. In Section~\ref{Sec:Abstract}, we expose the strategy for the $\mathrm L^2$-hypocoercivity method of~\cite{MR3324910} in the abstract setting of a general Hilbert space. The notion of Hilbert space adapted to~\eqref{Syst:VPFPlin} is exposed in Section~\ref{Sec:FunctionalSetting} with some fundamental considerations on confinement by an external potential and adapted Poincar\'e inequalities. Section~\ref{Sec:Proof} is devoted to the proof of Theorem~\ref{Thm:Main}: we have to check that the assumptions of Section~\ref{Sec:Abstract} hold in the functional setting of Section~\ref{Sec:FunctionalSetting}, with the special scalar product for Poisson coupling involving a non-local term associated with the norm defined by~\eqref{Def:norm}. In Section~\ref{Sec:DiffusionLimit}, we prove Theorem~\ref{Thm:DiffusionLimit}: our estimates are compatible with the diffusion limit as $\ve\to0$. Coming back to the non-linear problem~\eqref{Syst:VPFP} in dimension $d=1$, we prove in this latter case that an exponential rate of convergence as $t\to+\infty$ can be measured in the hypocoercive norm, that is, we prove Corollary~\ref{Cor:d=1}.

\medskip We shall adopt the following conventions. If $\mathsf a=(\mathsf a_i)_{i=1}^d$ and $\mathsf b=(\mathsf b_i)_{i=1}^d$ are two vectors with values in $\R^d$, then $\mathsf a\cdot\mathsf b=\sum_{i=1}^d\mathsf a_i\,\mathsf b_i$ and $|\mathsf a|^2=\mathsf a\cdot\mathsf a$. If $\mathsf A=(\mathsf A_{ij})_{i,j=1}^d$ and $\mathsf B=(\mathsf B_{ij})_{i,j=1}^d$ are two matrices with values in $\R^d\times\R^d$, then \hbox{$\mathsf A:\mathsf B=\sum_{i,j=1}^d\mathsf A_{ij}\,\mathsf B_{ij}$} and $|\mathsf A|^2=\mathsf A:\mathsf A$. We shall use the tensor convention that $\mathsf a\otimes\mathsf b$ is the matrix of elements $\mathsf a_i\,\mathsf b_j$. By extension to functions, $\nabla_xw$ is the gradient of a scalar function $w$ while $\nabla_x\cdot\mathsf u$ denotes the divergence of a vector valued function $\mathsf u=(\mathsf u_i)_{i=1}^d$ and $\nabla_x\otimes\mathsf u$ is the matrix valued function of elements $\partial\mathsf u_i/\partial x_j$. Hence
\[\textstyle
\mathrm{Hess}(w)=(\nabla_x\otimes\nabla_x)w=\(\frac{\partial^2w}{\partial x_i\,\partial x_j}\)_{i,j=1}^d
\]
denotes the Hessian of $w$ and, for instance, $\mathsf u\otimes \mathsf u:\mathrm{Hess}(w)=\sum_{i,j=1}^d\mathsf u_i\,\mathsf u_j\,\big(\mathrm{Hess}(w)\big)_{ij}$. We shall also write that $|\mathrm{Hess}(w)|^2=\mathrm{Hess}(w):\mathrm{Hess}(w)$.

\section{Hypocoercivity result and decay rates}\label{Sec:Abstract}

This section is devoted to the abstract hypocoercivity method in general Hilbert spaces and it is inspired from~\cite{MR3324910,BDMMS}. Since the methods sets the overall strategy of proof of our main results, we expose it for the convenience of the reader.

\medskip Let us consider the evolution equation
\be{EqnEvol}
\frac{dF}{dt}+\sfT F=\sfL F
\ee
on a Hilbert space $\mathcal H$. In view of the applications, we shall call $\sfT$ and $\sfL$ the \emph{transport} and the \emph{collision} operators and assume without further notice that they are respectively antisymmetric and symmetric, and both time-independent. On $\mathcal H$, we shall denote by~$\scalar\cdot\cdot$ and $\|\cdot\nH$ the scalar product and the norm. As in~\cite{MR3324910}, we assume that there are positive constants $\lambda_m$, $\lambda_M$, and $C_M$ such that, for any $F\in\mathcal H$, the following properties hold:\\[4pt]
$\rhd$ \emph{microscopic coercivity}
\be{H1}\tag{H1}
-\,\scalar{\sfL F}F\ge\lambda_m\,\|(\Id-\sfPi)F\nH^2\,,
\ee
$\rhd$ \emph{macroscopic coercivity}
\be{H2}\tag{H2}
\|\sfT\sfPi F\nH^2\ge\lambda_M\,\|\sfPi F\nH^2\,,
\ee$\rhd$ \emph{parabolic macroscopic dynamics}
\be{H3}\tag{H3}
\sfPi\sfT\sfPi\,F=0\,,
\ee$\rhd$ \emph{bounded auxiliary operators}
\be{H4}\tag{H4}
\|\sfA\sfT(\Id-\sfPi)F\nH+\|\sfA\sfL F\nH\le C_M\,\|(\Id-\sfPi)F\nH\,.
\ee
Here $\Id$ is the identity, $\sfPi$ is the orthogonal projection onto the null space of $\sfL$, ${}^*$ denotes the adjoint with respect to $\scalar\cdot\cdot$ and as in~\cite{Dolbeault2009511,MR3324910}, the operator $\sfA$ is defined by
\[
\sfA:=\big(\Id+(\sfT\sfPi)^*\sfT\sfPi\big)^{-1}(\sfT\sfPi)^*.
\]

Since a solution $F$ of~\eqref{EqnEvol} obeys to
\[
\frac12\,\frac d{dt}\|F\nH^2=\scalar{\sfL F}F\le-\,\lambda_m\,\|(\Id-\sfPi)F\nH^2\,,
\]
this is not enough to conclude that $\|F(t,\cdot)\nH^2$ decays exponentially with respect to $t\ge0$ and this is why we shall consider the Lyapunov functional
\[
\sfH[F]:=\tfrac12\,\|F\nH^2+\delta\,\scalar{\sfA F}F
\]
for some $\delta>0$ to be determined later. If $F$ solves~\eqref{EqnEvol}, then
\[
-\,\frac d{dt}\sfH[F]=\sfD[F]:=-\,\scalar{\sfL F}F+\delta\,\scalar{\sfA\sfT\sfPi F}F-\,\delta\,\scalar{\sfT\sfA F}F+\delta\,\scalar{\sfA\sfT(\Id-\sfPi)F}F-\delta\,\scalar{\sfA\sfL F}F
\]
using $\mathsf A = \sfPi\mathsf A$. Let us define
\[
\delta_\star=\min\left\{2,\,\lambda_m,\,\frac{4\,\lambda_m\,\lambda_M}{4\,\lambda_M+C_M^2\,(1+\lambda_M)}\right\}\,.
\]
We recall that the two main properties of the \emph{hypocoercivity} method of~\cite{MR3324910} for real valued operators and later extended in~\cite{BDMMS} to complex Hilbert spaces go as follows.
\begin{proposition}\label{DMS} Assume that~\eqref{H1}--\eqref{H4} hold and take $\delta\in(0,\delta_\star)$. Then we have:\\
{\rm (i)} $\sfH$ and $\|\cdot\nH^2$ are equivalent in the sense that
\be{H-norm}
\frac{2-\,\delta}4\,\|F\nH^2\le\sfH[F]\le\frac{2+\,\delta}4\,\|F\nH^2\quad\forall F\in\mathcal H\,.
\ee
{\rm (ii)} For some $\lambda>0$ depending on $\delta$, $\sfH$ and $\sfD$ are related by the entropy -- entropy production inequality
\be{Decay:H}
\lambda\,\sfH[F]\le\sfD[F]\quad\forall F\in\mathcal H\,.
\ee
\end{proposition}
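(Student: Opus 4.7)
Both parts rest on a single sharp estimate for the auxiliary operator $\sfA$, to which I would devote the very first step. Writing $G:=\sfA F$, the defining equation $(\Id+(\sfT\sfPi)^*\sfT\sfPi)G=(\sfT\sfPi)^*F$ combined with the antisymmetry of $\sfT$ and \eqref{H3} (in the form $\sfPi\sfT\sfPi=0$) allows one to replace $F$ by $(\Id-\sfPi)F$ on the right-hand side. Pairing with $G$ and using Young's inequality $ab\le\tfrac14 a^2+b^2$ to absorb $\|\sfT\sfPi G\|^2$ yields the central bound $\|\sfA F\|\le\tfrac12\|(\Id-\sfPi)F\|$. From the same equation one also reads $\sfA=\sfPi\sfA$, since the right-hand side lies in $\mathrm{range}\,(\sfT\sfPi)^*\subseteq\mathrm{range}\,\sfPi$.

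\textbf{Part (i): norm equivalence.} Using $\sfA=\sfPi\sfA$, one has $\scalar{\sfA F}F=\scalar{\sfA F}{\sfPi F}$, and the central bound combined with AM--GM gives
\[
|\scalar{\sfA F}F|\le\tfrac12\,\|(\Id-\sfPi)F\|\,\|\sfPi F\|\le\tfrac14\,\|F\|^2.
\]
Plugging this into $\sfH[F]=\tfrac12\|F\|^2+\delta\,\scalar{\sfA F}F$ produces \eqref{H-norm} as soon as $\delta<2$, which is the first constraint implicit in $\delta_\star$.

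\textbf{Part (ii): entropy -- entropy production.} Decompose $\sfD[F]$ into its five summands and bound each. The first, $-\scalar{\sfL F}F\ge\lambda_m\|(\Id-\sfPi)F\|^2$, is exactly \eqref{H1}. For the main coercive term, I would set $B:=\sfT\sfPi$ and notice that $\sfA\sfT\sfPi=(\Id+B^*B)^{-1}B^*B=\Id-(\Id+B^*B)^{-1}$; since \eqref{H2} says $B^*B\ge\lambda_M$ on $\mathrm{range}\,\sfPi$, the functional calculus gives
\[
\scalar{\sfA\sfT\sfPi F}F\ge\tfrac{\lambda_M}{1+\lambda_M}\,\|\sfPi F\|^2.
\]
For the three remaining cross terms the identity $\sfA=\sfPi\sfA$ reduces each inner product to one involving $\sfPi F$. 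In particular \eqref{H4} controls $|\scalar{\sfA\sfT(\Id-\sfPi)F}F|$ and $|\scalar{\sfA\sfL F}F|$ by $C_M\,\|(\Id-\sfPi)F\|\,\|\sfPi F\|$; and the more delicate term $\scalar{\sfT\sfA F}F$ is handled by observing that, after one use of antisymmetry, its $\sfPi F$ component vanishes by \eqref{H3}, leaving only $\scalar{\sfT\sfPi\sfA F}{(\Id-\sfPi)F}$, and that testing the defining equation of $G$ against itself gives $\|\sfT\sfPi\sfA F\|\le\|(\Id-\sfPi)F\|$, so this last term is bounded by $\|(\Id-\sfPi)F\|^2$.

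\textbf{Conclusion and main obstacle.} Collecting these estimates and applying Young's inequality to the cross products with a free parameter $\mu>0$ produces
\[
\sfD[F]\ge\Bigl(\lambda_m-\delta-\tfrac{\delta\,C_M^2}{\mu}\Bigr)\|(\Id-\sfPi)F\|^2+\delta\,\Bigl(\tfrac{\lambda_M}{1+\lambda_M}-\mu\Bigr)\|\sfPi F\|^2.
\]
For $\delta<\delta_\star$, a suitable $\mu\in(0,\lambda_M/(1+\lambda_M))$ makes both coefficients strictly positive, so that $\sfD[F]\ge\kappa\,(\|\sfPi F\|^2+\|(\Id-\sfPi)F\|^2)=\kappa\,\|F\|^2$ for some $\kappa(\delta)>0$. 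The upper bound in \eqref{H-norm} then converts this into \eqref{Decay:H} with $\lambda=4\kappa/(2+\delta)$. The main obstacle is not conceptual but a matter of careful bookkeeping: tracking the interplay between the free parameter $\mu$ and $\delta$ so as to recover the explicit threshold $\delta_\star$ stated in the proposition. Everything else is a direct application of \eqref{H1}--\eqref{H4} combined with the two structural facts $\sfA=\sfPi\sfA$ and $\|\sfA F\|\le\tfrac12\|(\Id-\sfPi)F\|$ established at the outset.
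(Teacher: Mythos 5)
Your proposal is correct and follows essentially the same route as the paper's proof: the same key estimates $\|\sfA F\nH\le\tfrac12\|(\Id-\sfPi)F\nH$ and $\|\sfT\sfA F\nH\le\|(\Id-\sfPi)F\nH$ from the defining equation of $G=\sfA F$, the same use of \eqref{H1}--\eqref{H4} on the five summands of $\sfD$, and the same resulting bound in the variables $X=\|(\Id-\sfPi)F\nH$, $Y=\|\sfPi F\nH$. The only (minor, and perfectly valid) difference is at the conclusion of (ii): the paper finds the largest $\lambda$ with $\sfD[F]\ge\lambda\,\sfH[F]$ directly via a discriminant condition on a $2\times2$ quadratic form in $(X,Y)$, whereas you first establish $\sfD[F]\ge\kappa\,\|F\nH^2$ and then convert through the norm equivalence \eqref{H-norm}, which yields the same admissible range $\delta\in(0,\delta_\star)$ but in general a slightly less sharp value of $\lambda$.
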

\noindent As a consequence, a solution $F$ of~\eqref{EqnEvol} with initial datum $F_0$ obeys to
\[
\sfH[F(t,\cdot)]\le\sfH[F_0]\,e^{-\lambda t}
\]
and
\be{HypocoRate}
\|F(t,\cdot)\nH^2\le\frac4{2-\delta}\,\sfH[F(t,\cdot)]\le\frac4{2-\delta}\,\sfH[F_0]\,e^{-\lambda t}\le\frac{2+\delta}{2-\delta}\,\|F_0\nH^2\,e^{-\lambda t}\quad\forall\,t\ge0\,.
\ee
\begin{proof} For completeness, we sketch the main steps of the proof, with slightly improved estimates compared to~\cite[Theorem~3]{BDMMS}. Since $\sfA\sfT\sfPi$ can be viewed as $z\mapsto(1+z)^{-1}\,z$ applied to $(\sfT\sfPi)^*\sfT\sfPi$,~\eqref{H1} and~\eqref{H2} imply~that
\[
-\,\scalar{\sfL F}F+\delta\,\scalar{\sfA\sfT\sfPi F}F\ge\lambda_m\,\|(\Id-\sfPi)F\nH^2+\frac{\delta\,\lambda_M}{1+\lambda_M}\,\|\sfPi F\nH^2\,.
\]
Our goal is to prove that the r.h.s.~controls the other terms in the expression of $\sfD[F]$. By~\eqref{H4}, we know that
\[
\big|\scalar{\sfA\sfT(\Id-\sfPi)F}F+\,\scalar{\sfA\sfL F}F\big|\le C_M\,\|\sfPi F\nH\,\|(\Id-\sfPi)F\nH\,.
\]
As in~\cite[Lemma~1]{MR3324910}, if $G=\sfA F$, \emph{i.e.}, if $(\sfT\sfPi)^*F=G+(\sfT\sfPi)^*\sfT\sfPi\,G$, then
\[
\scalar{\sfT\sfA F}F=\scalar G{(\sfT\sfPi)^*F}=\|G\nH^2+\|\sfT\sfPi G\nH^2=\|\sfA F\nH^2+\|\sfT\sfA F\nH^2\,.
\]
By the Cauchy-Schwarz inequality, we know that
\[
\scalar G{(\sfT\sfPi)^*F}=\scalar{\sfT\sfA F}{(\Id-\sfPi)F}\\
\le\|\sfT\sfA F\nH\,\|(\Id-\sfPi)F\nH\le \tfrac1{2\,\mu}\,\|\sfT\sfA F\nH^2+ \tfrac\mu2\,\|(\Id-\sfPi)F\nH^2
\]
for any $\mu>0$. Hence
\[\label{A-bound}
2\,\|\sfA F\nH^2+\big(2-\tfrac1\mu\big)\,\|\sfT\sfA F\nH^2\le\mu\,\|{(\Id-\sfPi)}F\nH^2\,,
\]
which, by taking either $\mu=1/2$ or $\mu=1$, proves that
\be{Eqn:AF-TAF}
\|\sfA F\nH\le \tfrac12\,\|{(\Id-\sfPi)}F\nH\quad\mbox{and}\quad\|\sfT\sfA F\nH\le\|{(\Id-\sfPi)}F\nH\,.
\ee
Hence
\[
|\scalar{\sfA F}F|\le\tfrac12\,X\,Y\le\tfrac14\(X^2+Y^2\)
\]
with $X:=\|(\Id-\sfPi)F\nH$ and $Y:=\|\sfPi F\nH$ because $\sfA$ takes values in $\sfPi\mathcal H$. This establishes~\eqref{H-norm} and, as a side result, also proves that
\[
\left|\scalar{\sfT\sfA F}F\right|=\left|\scalar{\sfT\sfA F}{(\Id-\sfPi)F}\right|\le\|{(\Id-\sfPi)}F\nH^2\,.
\]
Collecting terms in the expression of $\sfD[F]$, we find that
\[
\sfD[F]\ge(\lambda_m-\,\delta)\,X^2+\frac{\delta\,\lambda_M}{1+\lambda_M}\,Y^2-\,\delta\,C_M\,X\,Y
\]
with $X:=\|(\Id-\sfPi)F\nH$ and $Y:=\|\sfPi F\nH$. We know that $\sfH[F]\le\frac12\(X^2+Y^2\)+\frac\delta2\,X\,Y$, so that the largest value of $\lambda$ for which $\sfD[F]\ge\lambda\,\sfH[F]$ can be estimated by the largest value of $\lambda$ for which
\[
(X,Y)\mapsto(\lambda_m-\,\delta)\,X^2+\frac{\delta\,\lambda_M}{1+\lambda_M}\,Y^2-\,\delta\,C_M\,X\,Y-\frac\lambda2\,\(X^2+Y^2\)-\frac\lambda2\,\delta\,X\,Y=(X,Y)M(X,Y)^T
\]
is a nonnegative quadratic form, as a function of $(X,Y)$, where 
\[
M:=\(\begin{array}{cc}
\lambda_m-\,\delta-\frac\lambda2&-\frac\delta2\(C_M+\frac\lambda2\) \\[8pt]
-\frac\delta2\(C_M+\frac\lambda2\)&\frac{\delta\,\lambda_M}{1+\lambda_M}-\frac\lambda2
\end{array}\)\,.
\]
It is characterized by the discriminant condition
\[
h(\delta,\lambda):=\delta^2\(C_M+\frac\lambda2\)^2-4\,\(\lambda_m-\,\delta-\frac\lambda2\)\(\frac{\delta\,\lambda_M}{1+\lambda_M}-\frac\lambda2\)\le0
\]
and the sign condition~$\lambda_m-\,\delta-\lambda/2>0$. For any $\delta\in(0,\delta_\star)$, $\lambda\mapsto h(\delta,\lambda)$ is a second order polynomial such that $h(\delta,0)>0$ and $\lim_{\lambda\to+\infty}h(\delta,\lambda)=-\infty$. Hence, the largest possible value of $\lambda$ can be estimated by the positive root of $h(\delta,\lambda)=0$, for any given $\delta\in(0,\delta_\star)$.\end{proof}

Notice that the proof of Proposition~\ref{DMS} provides us with a constructive estimate of the decay rate $\lambda$, as a function of $\delta\in(0,\delta_\star)$. We refer to~\cite{ADSW} for a discussion of the best estimate of the decay rate of $\sfH$, \emph{i.e.}, the largest possible estimate of $\lambda$ when $\delta$ varies in the admissible range $(0,\delta_\star)$.

\section{Functional setting}\label{Sec:FunctionalSetting}

In this section, we collect a some observations on the external potential $V$ and on the stationary solution obtained by solving the Poisson-Boltzmann equation. Depending on growth conditions on $V$, we establish a notion of \emph{confinement} (so that~\eqref{Syst:VPFP} admits an integrable stationary solution) and coercivity properties (which amount to Poin\-car\'e type inequalities). Our goal is to give sufficient conditions in order that:\\
1) there exists a nonnegative stationary solution $f_\star$ of~\eqref{Syst:VPFP} of arbitrary given mass $M>0$: see Section~\ref{Sec:Poisson-Boltzmann};\\
2) there is a Poincar\'e inequality associated with the measure $e^{-V-\phi_\star}dx$ on $\R^d$, and variants of it, with weights: see Section~\ref{Sec:Poincare};\\
3) there is a Hilbert space structure on which we can study~\eqref{Syst:VPFPlin}: see Section~\ref{Sec:ScalarProduct}.\\
These conditions on $V$ determine a functional setting which is adapted to implement the method of Section~\ref{Sec:Abstract}. The potential $V(x)=|x|^\alpha$ with $\alpha>1$ is an \emph{admissible potential} in that perspective.

In~\cite{MR3324910}, without Poisson coupling, sufficient conditions were given on $V$ which were inspired by the \emph{carr\'e du champ} method and the Holley-Stroock perturbation lemma (see~\cite{MR893137} and~\cite{doi:10.1142/S0218202518500574} for related results). These conditions are not well adapted to handle an additional Poisson coupling. Here we adopt a slightly different approach, which amounts to focus on sufficient growth conditions of the external potential $V$ and on tools of spectral theory like Persson's lemma. For sake of simplicity, we require some basic regularity properties of $V$ and assume that
\be{Hyp-V1}\tag{V1}
V\in C^0\cap\mathrm W_{\mathrm{loc}}^{2,1}\big(\R^d\big)\quad\mbox{and}\quad\liminf_{|x|\to+\infty}V(x)=+\infty\,.
\ee
These regularity assumptions and the growth conditions on $V$ (also see below) could be relaxed, up to additional technicalities.

\subsection{Preliminary considerations on the Poisson equation and conventions}\label{Sec:Prelim}

Let us consider the Green function $G_d$ associated with $-\Delta_x$. We shall write $\phi=(-\Delta_x)^{-1}\,\rho$ as a generic notation for $\phi=G_d\ast\rho$ with $G_d(x)=c_d\,|x|^{2-d}$, $c_d^{-1}=(d-2)\,|\mathbb S^{d-1}|$ if $d\ge3$. Then, if $d\ge3$, with no further restriction, by using integrations by parts, we have that
\[
\irdx{\rho\,\phi}=\irdx{(-\Delta_x\phi)\,\phi}=\irdx{|\nabla_x\phi|^2}\,.
\]
If $d=2$, we use $G_2(x)=-\frac1{2\pi}\,\log{|x|}$. It is a standard observation that $\phi=(-\Delta_x)^{-1}\,\rho$ is such that $\nabla_x\phi(x)=-\frac1{2\pi}\,\big(\int_{\R^2}\rho\,dx\big)\,\frac x{|x|^2}$ as $|x|\to+\infty$ is not square integrable unless $\int_{\R^2}\rho\,dx=0$. If $\int_{\R^2}\rho\,dx=0$, one can prove that
\[
\int_{\R^2}\rho\,\phi\,dx=\int_{\R^2}|\nabla_x\phi|^2\,dx<+\infty\,.
\]

If $d=1$, we can use $G_1(x)=-\,|x|/2$, but it is sometimes more convenient to rely on the representation
\be{Rep1d}
\phi(x)=\frac M2\,x-\int_{-\infty}^xdy\int_{-\infty}^y\rho(z)\,dz+\phi_0
\ee
and we shall then write $\phi=\big(-d^2/dx^2\big)^{-1}\rho$ whenever we use~\eqref{Rep1d}. Here $\phi_0$ is a free parameter that we may fix by assuming that $\max_{x\in\R}\phi(x)=0$. This convention does not coincide with the representation by $G_1*\rho$ as, in that case, $\phi(0)$ is taken equal to $-\frac12\ir{|x|\,\rho(x)}$, provided this quantity is well defined. Obviously, the potential is defined up to an additive constant and this is therefore not an issue. Without further notice, we will rely on~\eqref{Rep1d} for the solutions when $d=1$.

If $d=1$, more important is the fact that $\phi=\(-d^2/dx^2\)^{-1}\rho$ satisfies $\phi'=-\,\mathsf m$ where $\mathsf m$ is defined by $\mathsf m(x):=\int_{-\infty}^x\rho(y)\,dy$ if $M=\int_\R\rho\,dx=0$. In that case, if we further assume that $\rho$ is compactly supported or has a sufficient decay at infinity, an integration by parts shows~that
\be{Nonnegativity1d}
\int_\R\phi\,\rho\,dx=-\int_\R\phi'\,\mathsf m\,dx=\int_\R|\phi'|^2\,dx=\int_\R\mathsf m^2\,dx\ge0\,.
\ee

Altogether, whenever $\irdx\rho=0$, we shall write $\irdx{\rho\,\phi}=\irdx{|\nabla_x\phi|^2}\ge0$ without any further precaution, for any $d\ge1$.

\subsection{The Poisson-Boltzmann equation}\label{Sec:Poisson-Boltzmann}

According to~\cite{MR1058151,MR1052014,MR1677677}, \emph{stationary solutions} of the \eqref{Syst:VPFP} system are given by
\[
f_\star(x,v)=\rho_\star(x)\,\mathcal M(v)
\]
where $\mathcal M(v)=(2\pi)^{-d/2}\,e^{-|v|^2/2}$ is the normalized Maxwellian function (or Gaussian function) and the spatial density $\rho_\star$ is determined by the \emph{Poisson-Boltzmann equation}
\[
-\Delta_x\phi_\star=\rho_\star=M\,\frac{e^{-V-\phi_\star}}{\irdx{e^{-V-\phi_\star}}}\,.
\]
This equation also appears in the literature as the the \emph{Poisson-Emden equation}. It is obvious that $\phi_\star$ is defined up to an additive constant which can be chosen such that $M=\irdx{e^{-V-\phi_\star}}$ and therefore solves~\eqref{Eqn:Poisson-Boltzmann}. Here $\nrm{\rho_\star}{\mathrm L^1(\R^d)}=\nrm{f_\star}{\mathrm L^1(\R^d\times\R^d)}=M$ is the mass, which is a free parameter of the problem, which can be adjusted by adding a constant to~$V$. The critical growth of $V$ such that there are solutions $\rho_\star\in\mathrm L^1(\R^d)$ of~\eqref{Eqn:Poisson-Boltzmann} which minimize the free energy strongly depends on the dimension. Here are some sufficient conditions.
\begin{lemma}\label{Lem:Poisson-Boltzmann} Let $M>0$. Assume that $V$ satisfies~\eqref{Hyp-V1} and
\be{Hyp-V2}\tag{V2}
\begin{array}{lll}
&|V|\,e^{-V}\in\mathrm L^1(\R^d)\quad&\mbox{if}\quad d\ge3\,,\\[4pt]
&\liminf_{|x|\to+\infty}\frac{V(x)}{\log|x|}>2+\frac M{2\pi}\quad&\mbox{if}\quad d=2\,,\\[4pt]
&\liminf_{|x|\to+\infty}\frac{V(x)-M\,|x|/2}{\log|x|}>2\quad&\mbox{if}\quad d=1\,.
\end{array}
\ee
Then~\eqref{Eqn:Poisson-Boltzmann} has a unique solution $\rho_\star\in\mathrm L^1(\R^d)$ such that $\irdx{\rho_\star}=M$ and $\phi_\star$ is the unique solution of~\eqref{Eqn:Poisson-Boltzmann}. Moreover~$\phi_\star$ is of class $C^2$ and $\liminf_{|x|\to+\infty}W_\star(x)=+\infty$, where
\[
W_\star=V+\phi_\star\quad\mbox{and}\quad\rho_\star=e^{-W_\star}\,.
\]
\end{lemma}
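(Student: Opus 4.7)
The natural approach is variational: I would realize $\rho_\star$ as the unique minimizer of the free energy
\[
\mathcal F[\rho]:=\irdx{\rho\,\log\rho}+\irdx{V\,\rho}+\tfrac12\,\irdx{\rho\,\phi_\rho}\,,
\]
over the convex set $\mathcal X_M:=\{\rho\in\mathrm L^1_+(\R^d):\irdx\rho=M\}$, where $\phi_\rho=(-\Delta_x)^{-1}\rho$ is interpreted as explained in Section~\ref{Sec:Prelim}. A formal Euler--Lagrange computation with Lagrange multiplier $\mu\in\R$ yields $\log\rho+V+\phi_\rho=\mu$ on $\{\rho>0\}$, i.e. $\rho=e^{\mu-V-\phi_\rho}$; adjusting the additive constant in $\phi_\star$ turns this into~\eqref{Eqn:Poisson-Boltzmann} with the required normalization $\irdx{e^{-V-\phi_\star}}=M$.

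The first main step is to show that $\mathcal F$ is bounded below and coercive under~\eqref{Hyp-V1}--\eqref{Hyp-V2}. In dimension $d\ge3$, the Poisson term is nonnegative (by Section~\ref{Sec:Prelim}) and can be dropped for a lower bound, so $|V|\,e^{-V}\in\mathrm L^1$ combined with the standard entropy-potential estimate (splitting $\rho=\rho\,\mathbf 1_{\rho\le e^{-V}}+\rho\,\mathbf 1_{\rho>e^{-V}}$ and using $s\,\log s\ge -1/e$) gives a lower bound. In dimensions $d=1,2$, the Poisson term is not sign definite, but $\phi_\rho$ has a known asymptotic profile at infinity dictated by $M$: $\phi_\rho(x)\sim-\tfrac M{2\pi}\log|x|$ when $d=2$ and $\phi_\rho(x)\sim-\tfrac M2\,|x|$ when $d=1$. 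Writing $V+\phi_\rho=(V-V_\infty)+(V_\infty+\phi_\rho)$ where $V_\infty$ captures the asymptotic divergent part of $V$, the precise thresholds in~\eqref{Hyp-V2} are exactly what is needed so that $V+\phi_\rho$ remains uniformly coercive enough to control the entropy from below and to guarantee tightness along minimizing sequences.

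The second step is the direct method: a minimizing sequence $(\rho_n)$ is tight and uniformly integrable by coercivity, hence weakly relatively compact in $\mathrm L^1$; convexity of $s\mapsto s\,\log s$ gives weak lower semicontinuity of the entropy, while the Poisson term passes to the limit after extracting convergence of $\nabla_x\phi_n$ in $\mathrm L^2$ (or of $\phi_n'$ in the $d=1$ renormalized sense). The minimizer satisfies the Euler--Lagrange relation, yielding existence. Uniqueness follows from strict convexity of $\mathcal F$ on $\mathcal X_M$: the entropy is strictly convex, and the Poisson quadratic form, rewritten as $\tfrac12\irdx{|\nabla_x\phi_\rho|^2}$ for mass-preserving variations (which have zero total charge, so the identity of Section~\ref{Sec:Prelim} applies), is convex. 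Once $\rho_\star$ is obtained, the regularity claim follows by a standard bootstrap: $\rho_\star\in\mathrm L^1\cap\mathrm L^\infty(\R^d)$ (the $\mathrm L^\infty$ bound comes from $\rho_\star\le M\,e^{-V-\inf\phi_\star}/\|e^{-V-\phi_\star}\|_{\mathrm L^1}$ once $\phi_\star$ is shown bounded below on compacts), then $\phi_\star\in C^{1,\gamma}_{\mathrm{loc}}$ by Calder\'on--Zygmund, then $\phi_\star\in C^2$ by Schauder since $e^{-V-\phi_\star}\in C^{0,\gamma}_{\mathrm{loc}}$ thanks to~\eqref{Hyp-V1}. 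Finally, $\liminf_{|x|\to\infty}W_\star=+\infty$ is read off from the same asymptotic comparison used for coercivity: in $d\ge3$, $\phi_\star$ is bounded so $W_\star\sim V\to\infty$; in $d=2,1$, $W_\star(x)$ is bounded below by $V(x)-\tfrac M{2\pi}\log|x|-C$, resp.\ $V(x)-\tfrac M2|x|-C$, which diverge under~\eqref{Hyp-V2}.

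The main obstacle is the low-dimensional analysis. In $d=1,2$ the Poisson self-energy is only conditionally convergent, $\phi_\rho$ is unbounded at infinity with a sign, and the representation~\eqref{Rep1d} must be handled carefully; the delicate point is to show that the ``renormalized'' Poisson energy (against a fixed reference density of mass $M$) is well defined and lower semicontinuous, and that the thresholds in~\eqref{Hyp-V2} are sharp enough to preserve coercivity while loose enough to admit $V(x)=|x|^\alpha$ with $\alpha>1$ as an admissible case. Everything else is standard convex-analysis and elliptic-regularity machinery.
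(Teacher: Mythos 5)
Your proposal follows essentially the same route as the paper: minimizing the free energy $\mathcal F[\rho]=\irdx{\rho\log\rho}+\irdx{V\rho}+\tfrac12\irdx{\rho\,\phi_\rho}$ over $\{\rho\ge0:\irdx\rho=M\}$, with the Poisson term dropped for a lower bound when $d\ge3$, a renormalized coercivity argument tied to the (V2) thresholds when $d\in\{1,2\}$, uniqueness from strict convexity, and regularity by elliptic bootstrap. The paper is more explicit in the $d=1$ case (using an auxiliary compactly supported density of mass $M$ to peel off the linear growth of $\phi_\rho$) and delegates $d=2$ to cited references, but your outline captures the same mechanism.
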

As a consequence of Lemma~\ref{Lem:Poisson-Boltzmann}, we learn that under Assumptions~\eqref{Hyp-V1} and~\eqref{Hyp-V2}, the potential $W_\star$ also satisfies~\eqref{Hyp-V1}. Regularity results on~\eqref{Eqn:Poisson-Boltzmann} are scattered in the literature. See for instance~\cite[Proposition~3.5]{MR3522010}. The general strategy is, as usual, to use the fact that the solution is in the energy space and the equation to obtain uniform estimates by elliptic bootstrapping. The regularity and decay estimates as $|x|\to+\infty$ follow respectively from the regularity of $V$ and from its growth properties, using a representation of the solution based on the Green function. This is again classical and details will be omitted here.
\begin{proof} The case $d\ge3$ is covered by~\cite[p.~123]{MR1677677}. The free energy
\[
\mathcal F[\rho]:=\irdx{\rho\,\log\rho}+\irdx{\rho\,V}+\frac12\irdx{\rho\,\phi}
\]
is bounded from below under the mass constraint $\irdx\rho=M$ using the fact that 
\[
\irdx{\rho\,\phi}=\irdx{|\nabla_x\phi|^2}\ge0
\]
and, with $u:=\rho\,e^V$, the convexity estimate
\[
\mathcal F[\rho]\ge\irdx{\rho\,\log\rho}+\irdx{\rho\,V}=\irdx{(\underbrace{u\,\log u-u+1}_{\ge0})\,e^{-V}}+M-\ird{e^{-V}}\,.
\]
A slightly more accurate estimate is obtained using Jensen's inequality applied to $u\,\log u$ with the measure $e^{-V}\,dx/\ird{e^{-V}}$. The existence follows by a minimization method. The Euler-Lagrange equation reads
\[
1+\log\rho+V+\phi=C
\]
for some $C\in\R$ which is the Lagrange multiplier associated to the mass constraint. Hence $\rho=e^{C-1}\,e^{-(V+\phi)}$ and we deduce from the mass constraint $M=\ird\rho$ that $C=1+\log M-\log\ird{e^{-(V+\phi)}}$. As noticed in~\cite{MR1015923,MR1115290}, the uniqueness is a consequence of the convexity of $\mathcal F$. Finally, by standard elliptic regularity, $\phi_\star=(-\Delta_x)^{-1}\,\rho_\star$ is continuous and has a limit as $|x|\to+\infty$.

In dimension $d=1$ or $d=2$, the same scheme can be adapted after proving that $\mathcal F$ is bounded from below. This has been established in~\cite[Theorem~3.4]{DL2019} (also see~\cite{Nernst-Planck}) when $d=2$ under Assumption~\eqref{Hyp-V2}. The case $d=1$ can be dealt with by elementary methods, as follows. Let us consider the potential
\[
V_0(x)=\frac M2\((x+1)\,\mathbb 1_{(-\infty,-1)}(x)+\frac12\,(x+1)\,(1-x)\,\mathbb 1_{[-1,+1]}(x)-(x-1)\,\mathbb 1_{(+1,+\infty)}(x)-\frac12\)
\]
such that, a.e., $-V_0''=\frac M2\,\mathbb 1_{[-1,+1]}=:\rho_0$ and let $\psi:=\phi-V_0$. We claim that
\[
\mathcal F[\rho]=\ir{\rho\,\log\rho}+\ir{\rho\,(V+V_0)}-\frac12\ir{\psi''\,\psi}+\frac12\ir{\rho_0\,\psi}-\frac 12\ir{\rho\,V_0}
\]
is bounded from below because the first two integrals can be bounded using Jensen's inequality and we have $\ir{\psi''\,\psi}=-\ir{|\psi'|^2}$. The balance between the last to terms is slightly more subtle. Let $x_0\in\R$ be such that $\phi(x_0)=M/2$. Using~\eqref{Rep1d}, we notice that $\phi'(x_0)=0$ and obtain
\[
\phi(x)=-\int_{x_0}^xdy\int_{x_0}^y\rho(z)\,dz
\]
because $\phi(x_0)=\phi_0=0$. A crude consequence is the estimate $-\frac M2\,|x-x_0|\le\phi(x)\le0$ for any $x\in\R$, which shows that
\[
\ir{\rho_0\,\psi}\ge\ir{\rho_0\,V_0}+\frac M4\int_{-1}^{+1}\phi(x)\,dx\ge\ir{\rho_0\,V_0}-\frac{M^2}8\int_{-1}^{+1}|x-x_0|\,dx\,,
\]
that is, a lower bound of the order of $|x_0|$ as $|x_0|\to+\infty$. On the other hand, if $|x_0|>1$, we have
\[
-\ir{\rho\,V_0}\ge-\frac M4\int_{-1}^{+1}\rho(x)\,dx+\int_{|x|\ge|x_0|}\rho\,V_0\,dx\ge-\frac{M^2}2+\frac M2\,V_0(x_0)\,,
\]
which is of the order of $V_0(x_0)$ and dominates $|x_0|$ as $|x_0|\to+\infty$, by Assumption~\eqref{Hyp-V2}. Combining these estimates provides us with the lower bound we need. \end{proof}

\subsection{Some non-trivial Poincar\'e inequalities}\label{Sec:Poincare}

Assume that $V$ is such that~\eqref{Hyp-V1}-\eqref{Hyp-V2} hold. Before considering the case of the measure $e^{-W_\star}dx$ on $\R^d$, with $W_\star=V+\phi_\star$, we may ask under which conditions on $V$ the \emph{Poincar\'e inequality}
\be{Ineq:Poincare}
\irdx{|\nabla_xu|^2\,e^{-V}}\ge\mathcal C_{\rm P}\irdx{|u|^2\,e^{-V}}\quad\forall\,u\in\mathrm H^1(\R^d)\quad\mbox{such that}\quad\irdx{u\,e^{-V}}=0
\ee
is true for some constant $\mathcal C_{\rm P}>0$. Let us define $w=u\,e^{-V/2}$ and observe that~\eqref{Ineq:Poincare} is equivalent to
\[
\irdx{|\nabla_xw|^2}+\irdx{\Phi\,|w|^2}\ge\mathcal C_{\rm P}\irdx{|w|^2}
\]
under the condition that $\irdx{w\,e^{-V/2}}=0$. Here $\Phi=\frac14\,|\nabla_xV|^2-\frac12\,\Delta_xV$ is obtained by expanding the square in $\irdx{\big|\nabla_xw+\frac12\,w\,\nabla_xV\big|^2}$ and integrating by parts the cross-term. From the expression of the square, we learn that the kernel of the Schr\"odinger operator $-\Delta_x+\Phi$ on $\mathrm L^2\big(\R^d,dx\big)$ is generated by $e^{-V/2}$. According to Persson's result~\cite[Theorem~2.1]{MR0133586}, the lower end~$\sigma$ of the continuous spectrum of the Schr\"odinger operator $-\,\Delta_x+\Phi$ is such that
\[
\sigma\ge\lim_{r\to+\infty}\mathop{\mathrm{infess}}_{x\in B_r^c}\,\Phi(x)=:\sigma_0\,.
\]
As a consequence, if $\sigma$ is positive, either there is no eigenvalue in the interval $(0,\sigma)$ and $\mathcal C_{\rm P}=\sigma$, or $\mathcal C_{\rm P}$ is the lowest positive eigenvalue, and it is positive by construction. In both cases, we know that~\eqref{Ineq:Poincare} holds for some $\mathcal C_{\rm P}>0$ if $\sigma_0>0$. In order to prove~\eqref{Ineq:Poincare}, it is enough to check that
\be{Hyp-V3a}\tag{V3a}
\sigma_V:=\lim_{r\to+\infty}\mathop{\mathrm{infess}}_{x\in B_r^c}\(\frac14\,|\nabla_xV|^2-\frac12\,\Delta_xV\)>0\quad\mbox{and}\quad\lim_{r\to+\infty}\mathop{\mathrm{infess}}_{x\in B_r^c}|\nabla_xV|>0\,.
\ee
See for instance~\cite{Bakry_2008,Bouin_2020} for further considerations on Assumption~\eqref{Hyp-V3a}. Now let us consider the measure $\rho_\star\,dx=e^{-W_\star}dx$ on $\R^d$ and establish the corresponding Poin\-ca\-r\'e inequality.
\begin{lemma}\label{Lem:Poincare} Assume that $d\ge1$ and consider $V$ such that~\eqref{Hyp-V1}, \eqref{Hyp-V2} and \eqref{Hyp-V3a} hold. We further assume that
\be{Hyp-V4}\tag{V4}
\lim_{r\to+\infty}\mathop{\mathrm{infess}}_{|x|>r}\(\(M-2V'\)^2-2\,V''\)>0\quad\mbox{if}\quad d=1\,.
\ee
If $\phi_\star$ solves~\eqref{Eqn:Poisson-Boltzmann} and $W_\star=V+\phi_\star$, then there is a positive constant $\mathcal C_\star$ such that
\be{Ineq:Poincare2}
\irdx{|\nabla_xu|^2\,\rho_\star}\ge\mathcal C_\star\irdx{|u|^2\,\rho_\star}\quad\forall\,u\in\mathrm H^1(\R^d)\quad\mbox{s.t.}\quad\irdx{u\,\rho_\star}=0\,.
\ee\end{lemma}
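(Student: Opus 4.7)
My plan is to convert the weighted Poincar\'e inequality for the measure $\rho_\star\,dx=e^{-W_\star}\,dx$ into a spectral question for the Schr\"odinger operator $\mathsf H_\star=-\Delta_x+\Phi_\star$ on $\mathrm L^2(\R^d,dx)$, where $\Phi_\star=\tfrac14\,|\nabla_xW_\star|^2-\tfrac12\,\Delta_xW_\star$. Setting $w=u\,e^{-W_\star/2}$, expanding $\irdx{\big|\nabla_xw+\tfrac12\,w\,\nabla_xW_\star\big|^2}\ge0$ and integrating the cross term by parts shows that~\eqref{Ineq:Poincare2} is equivalent to
\[
\irdx{|\nabla_xw|^2}+\irdx{\Phi_\star\,|w|^2}\ge\mathcal C_\star\irdx{|w|^2}
\]
for all $w\in\mathrm H^1(\R^d)$ satisfying $\irdx{w\,e^{-W_\star/2}}=0$. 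Since $\irdx{e^{-W_\star}}=M<\infty$, the function $e^{-W_\star/2}$ lies in $\mathrm L^2(\R^d,dx)$ and generates the kernel of $\mathsf H_\star$, a simple eigenvalue at $0$; the inequality we want is precisely the spectral gap above this ground state.

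Next I would invoke Persson's lemma, as was done for~\eqref{Ineq:Poincare}, to get that the bottom $\sigma_\star$ of the essential spectrum of $\mathsf H_\star$ satisfies $\sigma_\star\ge\liminf_{|x|\to+\infty}\Phi_\star(x)$. If this liminf is strictly positive, the spectrum of $\mathsf H_\star$ in $(0,\sigma_\star)$ consists of at most finitely many eigenvalues, each positive by construction, and the smallest one (or $\sigma_\star$ itself if none is present) provides the desired constant $\mathcal C_\star>0$. The whole question is thereby reduced to checking the positivity of $\liminf_{|x|\to+\infty}\Phi_\star$.

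This is where assumptions~\eqref{Hyp-V3a} and~\eqref{Hyp-V4} enter. From $-\Delta_x\phi_\star=e^{-W_\star}$ and the confinement $\liminf_{|x|\to+\infty}W_\star=+\infty$ (Lemma~\ref{Lem:Poisson-Boltzmann}), we have $\Delta_x\phi_\star\to0$ at infinity. For $d\ge2$, standard Green function estimates combined with $\rho_\star\in\mathrm L^1$ give $\nabla_x\phi_\star\to0$ at infinity as well, so $\Phi_\star\sim\tfrac14\,|\nabla_xV|^2-\tfrac12\,\Delta_xV$ and positivity of its $\liminf$ follows directly from~\eqref{Hyp-V3a}. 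The main obstacle, and the reason~\eqref{Hyp-V4} is needed, is the one-dimensional case: the representation~\eqref{Rep1d} gives $\phi_\star'(x)\to\mp M/2$ as $x\to\pm\infty$, so $\phi_\star'$ does \emph{not} vanish at infinity but shifts $V'$ by a nonzero constant. The leading behaviour of $4\,\Phi_\star$ at infinity is then essentially that of $(M-2V')^2-2V''$, and its positivity is precisely the content of~\eqref{Hyp-V4}. For the model potential $V(x)=|x|^\alpha$ with $\alpha>1$ the verification is immediate, since $|V'|^2$ dominates $|V''|$ at infinity.
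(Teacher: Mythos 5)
Your proposal reproduces the paper's argument: you make the same unitary substitution $w=u\,e^{-W_\star/2}$ to transform \eqref{Ineq:Poincare2} into a spectral-gap statement for the Schr\"odinger operator $-\Delta_x+\Phi_\star$ with $\Phi_\star=\tfrac14|\nabla_x W_\star|^2-\tfrac12\Delta_x W_\star$, invoke Persson's criterion exactly as the paper does to reduce the question to the positivity of $\liminf_{|x|\to\infty}\Phi_\star$, and then verify this positivity by the same case analysis: for $d\ge2$ using the Green-function decay $|\nabla_x\phi_\star|=\mathcal O(|x|^{1-d})$ together with \eqref{Hyp-V3a}, and for $d=1$ using the asymptotics $\phi_\star'(x)\to\mp M/2$ as $x\to\pm\infty$ together with \eqref{Hyp-V4}. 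This is the same route, merely spelled out in a bit more detail than the paper's terse proof.
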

\begin{proof} In order to adapt the result for $V$ to $W_\star$, it is enough to prove that
\[
\sigma_{W_\star}:=\lim_{r\to+\infty}\mathop{\mathrm{infess}}_{x\in B_r^c}\(\frac14\,\big|\nabla_x\phi_\star+\nabla_xV\big|^2-\frac12\(\Delta_x\phi_\star+\Delta_xV\)\)>0\,.
\]
By Lemma~\ref{Lem:Poisson-Boltzmann} and~\eqref{Hyp-V3a}, $|\Delta_x\phi_\star|=\rho_\star=o\(|\nabla_xV|^2-2\,\Delta_xV\)$. Using $\nabla_x\phi_\star=\nabla G_d*\rho_\star$, we obtain that $|\nabla_x\phi_\star|=\mathcal O\(|x|^{1-d}\)$ is negligible compared to $|\nabla_xV|$ if $d\ge2$. If $d=1$, the result follows from~\eqref{Hyp-V4} using the fact that $\phi_\star'(x)\sim\pm\,M/2$ as $x\to\mp\infty$.\end{proof}

We shall now replace~\eqref{Hyp-V3a} by the slightly stronger assumption that, for some $\theta\in[0,1)$, 
\be{Hyp-V3b}\tag{V3b}
\lim_{r\to+\infty}\mathop{\mathrm{infess}}_{x\in B_r^c}\(\frac\theta4\,|\nabla_xV|^2-\frac12\,\Delta_xV\)\ge0\quad\mbox{and}\quad\lim_{r\to+\infty}\mathop{\mathrm{infess}}_{x\in B_r^c}|\nabla_xV|>0\,.
\ee
\begin{corollary}\label{Cor:Poincare} Assume that $d\ge1$ and consider $V$ such that~\eqref{Hyp-V1}, \eqref{Hyp-V2}, \eqref{Hyp-V3b} and \eqref{Hyp-V4} hold. If~$\phi_\star$ solves~\eqref{Eqn:Poisson-Boltzmann} and $W_\star=V+\phi_\star$, then there is a positive constant $\mathcal C$ such that
\be{Ineq:Poincare3}
\irdx{|\nabla_xu|^2\,\rho_\star}\ge\mathcal C\irdx{|u|^2\,|\nabla_xW_\star|^2\,\rho_\star}\quad\forall\,u\in\mathrm H^1(\R^d)\quad\mbox{s.t.}\quad\irdx{u\,\rho_\star}=0\,.
\ee\end{corollary}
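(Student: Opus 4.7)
The strategy is a carré du champ computation combined with a Persson-type localization at infinity and the unweighted Poincaré inequality of Lemma~\ref{Lem:Poincare}. Setting $w=u\,e^{-W_\star/2}$, expanding $|\nabla_x u|^2=e^{W_\star}|\nabla_x w+\tfrac12 w\,\nabla_x W_\star|^2$ and integrating the cross term by parts yields
\begin{equation*}
\int_{\R^d}|\nabla_x u|^2\,\rho_\star\,dx=\int_{\R^d}|\nabla_x w|^2\,dx+\int_{\R^d}\Phi_\star\,w^2\,dx,\qquad\Phi_\star:=\tfrac14\,|\nabla_xW_\star|^2-\tfrac12\,\Delta_xW_\star.
\end{equation*}
The target inequality~\eqref{Ineq:Poincare3} is then equivalent to the nonnegativity of $\int_{\R^d}|\nabla_x w|^2\,dx+\int_{\R^d}\bigl(\Phi_\star-\mathcal C\,|\nabla_x W_\star|^2\bigr)\,w^2\,dx$.

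The pointwise step is to prove that $\Phi_\star\ge c\,|\nabla_x W_\star|^2$ on $B_R^c$ for some $c>0$ and $R$ large enough. Rewriting~\eqref{Hyp-V3b} with $\theta\in[0,1)$ as $\tfrac14|\nabla_xV|^2-\tfrac12\Delta_xV\ge\tfrac{1-\theta}{4}|\nabla_xV|^2$ at infinity, one transfers the estimate from $V$ to $W_\star=V+\phi_\star$ as in the proof of Lemma~\ref{Lem:Poincare}: by Lemma~\ref{Lem:Poisson-Boltzmann}, $|\Delta_x\phi_\star|=\rho_\star\to0$ and, for $d\ge2$, $|\nabla_x\phi_\star|=\mathcal O(|x|^{1-d})$ is negligible in front of $|\nabla_xV|$; for $d=1$, $\phi_\star'\to\mp M/2$ does not decay, but Assumption~\eqref{Hyp-V4} precisely controls the shifted leading-order expression $\tfrac14(V'+\phi_\star')^2-\tfrac12 V''$ appearing in $\Phi_\star-c|\nabla_xW_\star|^2$. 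In all dimensions we may choose some $c\in(0,\tfrac{1-\theta}{4})$.

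To handle the compact remainder, note that the regularity of $W_\star$ provided by Lemma~\ref{Lem:Poisson-Boltzmann} gives bounds $|\nabla_xW_\star|^2\le K_1$ and $\Phi_\star\ge-K_2$ on $B_R$, so
\begin{equation*}
\int_{\R^d}(\Phi_\star-c\,|\nabla_xW_\star|^2)\,w^2\,dx\ge-(K_2+c\,K_1)\int_{B_R}w^2\,dx\ge-K\,\int_{\R^d}u^2\,\rho_\star\,dx
\end{equation*}
with $K=K_2+c\,K_1$. Dropping the non-negative term $\int|\nabla_x w|^2\,dx$ and plugging this into the identity above yields
\begin{equation*}
\int_{\R^d}|\nabla_xu|^2\,\rho_\star\,dx\ge c\int_{\R^d}|\nabla_x W_\star|^2\,u^2\,\rho_\star\,dx-K\int_{\R^d}u^2\,\rho_\star\,dx.
\end{equation*}
Since $u$ has zero $\rho_\star$-average, the unweighted Poincaré inequality~\eqref{Ineq:Poincare2} of Lemma~\ref{Lem:Poincare} gives $\int u^2\rho_\star\,dx\le\mathcal C_\star^{-1}\int|\nabla_xu|^2\rho_\star\,dx$; inserting this and rearranging produces~\eqref{Ineq:Poincare3} with $\mathcal C=c\,\mathcal C_\star/(\mathcal C_\star+K)$.

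I expect the main obstacle to lie in the pointwise comparison at infinity for $d=1$: because $\phi_\star'$ does not decay, the transition from the inequality on $V$ to the one on $W_\star$ is a genuine shift by $\mp M/2$ rather than a small perturbation, and justifying it quantitatively is exactly the role of Assumption~\eqref{Hyp-V4}; the rest of the argument is a routine carré du champ / cutoff combination, and gluing with the Poincaré inequality then comes for free.
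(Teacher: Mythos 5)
Your proof is correct and follows essentially the same approach as the paper's. Both proofs start from the same carré du champ identity $\int|\nabla_xu|^2\rho_\star\,dx=\int|\nabla_x(u\sqrt{\rho_\star})|^2\,dx+\int\Phi_\star\,|u|^2\rho_\star\,dx$ with $\Phi_\star=\tfrac14|\nabla_xW_\star|^2-\tfrac12\Delta_xW_\star$, and both invoke Lemma~\ref{Lem:Poincare} to absorb the part of $\Phi_\star$ that is negative on a bounded region. The only difference is packaging: the paper takes a convex combination $(1-\eta)\cdot$Poincaré $+\,\eta\cdot$(carré du champ) and chooses $\eta$ so that $(1-\eta)\,\mathcal C_\star+\eta\bigl(\tfrac\theta4|\nabla_xW_\star|^2-\tfrac12\Delta_xW_\star\bigr)\ge0$ a.e., obtaining $\mathcal C=\eta(1-\theta)/4$; you instead split $\R^d$ into $B_R$ and $B_R^c$, use the pointwise bound $\Phi_\star\ge c\,|\nabla_xW_\star|^2$ outside $B_R$ (which encodes the same $(1-\theta)/4$ gap from~\eqref{Hyp-V3b} transferred to $W_\star$), bound the contribution of $B_R$ by $-K\int u^2\rho_\star\,dx$, and absorb that last term at the end via Poincaré, yielding $\mathcal C=c\,\mathcal C_\star/(\mathcal C_\star+K)$. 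These two organizations require the same ingredient—that $\tfrac\theta4|\nabla_xW_\star|^2-\tfrac12\Delta_xW_\star$ be bounded below, i.e.\ the transfer of~\eqref{Hyp-V3b} from $V$ to $W_\star$ (which needs~\eqref{Hyp-V4} when $d=1$)—and produce the same conclusion; neither is more general. The discussion of the $d=1$ case at the end of your proposal is accurate but, as you note, is the same subtlety the paper relegates to the proof of Lemma~\ref{Lem:Poincare}.
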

\begin{proof} By expanding $\big|\nabla_x\(u\,\sqrt{\rho_\star}\)\big|^2$, using $\nabla_x\sqrt{\rho_\star}=-\frac12\,\nabla_xW_\star\,\sqrt{\rho_\star}$ and integrating by parts, we obtain that
\[
0\le\irdx{\big|\nabla_x\(u\,\sqrt{\rho_\star}\)\big|^2}=\irdx{|\nabla_xu|^2\,\rho_\star}-\irdx{\(\frac14\,|\nabla_xW_\star|^2-\frac12\,\Delta_xW_\star\)|u|^2\,\rho_\star}\,.
\]
Combined with~\eqref{Ineq:Poincare2}, this shows that
\begin{multline*}
\irdx{|\nabla_xu|^2\,\rho_\star}\ge\irdx{\left[(1-\eta)\,\mathcal C_\star+\eta\(\frac\theta4\,|\nabla_xW_\star|^2-\frac12\,\Delta_xW_\star\)\right]\,|u|^2\,\rho_\star}\\
+\frac\eta4\,(1-\theta)\irdx{|u|^2\,|\nabla_xW_\star|^2\,\rho_\star}
\end{multline*}
for any $\eta\in(0,1)$. With $\eta$ chosen small enough so that $(1-\eta)\,\mathcal C_\star+\eta\,\big(\frac\theta4\,|\nabla_xW_\star|^2-\frac12\,\Delta_xW_\star\big)$ is nonnegative a.e., the conclusion holds with $\mathcal C=\eta\,(1-\theta)/4$.\end{proof}

In the same spirit as for Corollary~\ref{Cor:Poincare}, we shall assume that for some $\theta\in[0,1)$, 
\begin{multline}\label{Hyp-V5}\tag{V5}
\lim_{r\to+\infty}\mathop{\mathrm{infess}}_{x\in B_r^c}\(\frac\theta4\,|\nabla_xV|^4-\frac12\,\Delta_xV\,|\nabla_xV|^2-\mathrm{Hess}(V):\nabla_xV\otimes\nabla_xV\)\ge0\\
\mbox{and}\quad\lim_{r\to+\infty}\mathop{\mathrm{infess}}_{x\in B_r^c}|\nabla_xV|>0\,.
\end{multline}
\begin{corollary}\label{Cor:Poincare2} Assume that $d\ge1$ and consider $V$ such that~\eqref{Hyp-V1}, \eqref{Hyp-V2}, \eqref{Hyp-V3b} and \eqref{Hyp-V5} hold. If~$\phi_\star$ solves~\eqref{Eqn:Poisson-Boltzmann} and $W_\star=V+\phi_\star$, then there is a positive constant $\mathcal C_\circ$ such that
\[
\irdx{|\nabla_xu|^2\,|\nabla_xW_\star|^2\,\rho_\star}\ge\mathcal C_\circ\irdx{|u|^2\,|\nabla_xW_\star|^4\,\rho_\star}\quad\forall\,u\in\mathrm H^1(\R^d)\quad\mbox{s.t.}\quad\irdx{u\,\rho_\star}=0\,.
\]\end{corollary}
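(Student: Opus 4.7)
The proof follows the pattern of Corollary~\ref{Cor:Poincare}: a Bochner-type identity weighted by $|\nabla_xW_\star|^2$ provides an inequality whose right-hand side is asymptotically positive and comparable to $|\nabla_xW_\star|^4$, and a convex combination with Corollary~\ref{Cor:Poincare} itself closes the estimate. Starting from $0\le|\nabla_x(u\sqrt{\rho_\star})|^2$, multiplying by $|\nabla_xW_\star|^2$ and integrating over $\R^d$, expansion of the square together with one integration by parts on the cross term (using $\nabla_x\rho_\star=-\rho_\star\,\nabla_xW_\star$) yields
\[
\irdx{|\nabla_xu|^2\,|\nabla_xW_\star|^2\,\rho_\star}
\;=\; \irdx{|\nabla_x(u\sqrt{\rho_\star})|^2\,|\nabla_xW_\star|^2} \;+\; \irdx{u^2\,\Phi_\star\,\rho_\star}
\]
with
\[
\Phi_\star \;:=\; \tfrac14\,|\nabla_xW_\star|^4 \;-\; \mathrm{Hess}(W_\star):\nabla_xW_\star\otimes\nabla_xW_\star \;-\; \tfrac12\,|\nabla_xW_\star|^2\,\Delta_xW_\star\,.
\]
Dropping the nonnegative first term on the right-hand side produces the key inequality $\irdx{|\nabla_xu|^2\,|\nabla_xW_\star|^2\,\rho_\star}\ge\irdx{u^2\,\Phi_\star\,\rho_\star}$.

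The next step is an asymptotic comparison of $\Phi_\star$ with $|\nabla_xW_\star|^4$. By Lemma~\ref{Lem:Poisson-Boltzmann} and the second part of~\eqref{Hyp-V3b}, one has $|\nabla_xV|\to+\infty$ as $|x|\to+\infty$, while $\phi_\star$, $\nabla_x\phi_\star$ and $\mathrm{Hess}(\phi_\star)$ are of lower order than the powers of $|\nabla_xV|$ occurring in $\Phi_\star$. Combined with~\eqref{Hyp-V5} applied to~$V$, this gives
\[
\Phi_\star \;\ge\; \tfrac{1-\theta}{4}\,|\nabla_xW_\star|^4 \;+\; o\big(|\nabla_xW_\star|^4\big)\quad\text{as}\ |x|\to+\infty\,,
\]
so there exist $R>0$ and $\mu>0$ such that $\Phi_\star\ge\mu\,|\nabla_xW_\star|^4$ on $B_R^c$.

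Following the scheme of Corollary~\ref{Cor:Poincare}, a small-parameter convex combination of the first-step inequality with Corollary~\ref{Cor:Poincare} applied to $u$ (which uses the mean-zero condition $\irdx{u\,\rho_\star}=0$) allows one to absorb the potentially negative contribution of $\Phi_\star$ on $B_R$. Using the splitting $\R^d = B_{R'} \cup B_{R'}^c$ with $R'\ge R$ large enough that $|\nabla_xW_\star|\ge g_0>0$ on $B_{R'}^c$ by~\eqref{Hyp-V3b}, the residual bulk contribution is controlled by a small multiple of $\irdx{|\nabla_xu|^2\,|\nabla_xW_\star|^2\,\rho_\star}$ via Corollary~\ref{Cor:Poincare}. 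For a sufficiently small interpolation parameter, the combined integrand is nonnegative almost everywhere and bounded below by $\mathcal C_\circ\,|\nabla_xW_\star|^4$, yielding the desired inequality with a constructive constant $\mathcal C_\circ>0$.

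The principal difficulty is the degeneracy of $|\nabla_xW_\star|^2$ at critical points of $W_\star$ inside a compact set: the weight on the left-hand side vanishes exactly where the right-hand side weight does, which forbids a purely pointwise majorization of $\Phi_\star$ by $\mathcal C_\circ\,|\nabla_xW_\star|^4$ on $B_R$. The mean-zero constraint $\irdx{u\,\rho_\star}=0$, invoked through Corollary~\ref{Cor:Poincare} rather than through Lemma~\ref{Lem:Poincare}, is what makes the bulk absorption possible and is thus the crucial structural ingredient of the proof.
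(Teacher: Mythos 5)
Your first step is correct and matches the paper's starting point: expanding $|\nabla_x(u\sqrt{\rho_\star})|^2\,|\nabla_xW_\star|^2$, integrating by parts with $\nabla_x\rho_\star=-\rho_\star\,\nabla_xW_\star$, and dropping the manifestly nonnegative term indeed gives
\[
\irdx{|\nabla_xu|^2\,|\nabla_xW_\star|^2\,\rho_\star}\ge\irdx{u^2\,\Phi_\star\,\rho_\star}\,,\quad
\Phi_\star=\tfrac14\,|\nabla_xW_\star|^4-\tfrac12\,\Delta_xW_\star\,|\nabla_xW_\star|^2-\mathrm{Hess}(W_\star):\nabla_xW_\star\otimes\nabla_xW_\star\,,
\]
and \eqref{Hyp-V5} lifted to $W_\star$ yields $\Phi_\star\ge\mu\,|\nabla_xW_\star|^4$ outside a large ball $B_R$.

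The gap is in the absorption step. You ``close'' the estimate by a convex combination of this inequality with Corollary~\ref{Cor:Poincare}, but the two inequalities have incompatible left-hand sides: the Bochner step gives you $\irdx{|\nabla_xu|^2\,|\nabla_xW_\star|^2\,\rho_\star}$, whereas Corollary~\ref{Cor:Poincare} requires $\irdx{|\nabla_xu|^2\,\rho_\star}$. A convex combination $\eta\irdx{|\nabla_xu|^2\,\rho_\star}+(1-\eta)\irdx{|\nabla_xu|^2\,|\nabla_xW_\star|^2\,\rho_\star}$ on the left cannot be dominated by the target quantity $\irdx{|\nabla_xu|^2\,|\nabla_xW_\star|^2\,\rho_\star}$, because $|\nabla_xW_\star|^2$ vanishes at critical points of $W_\star$ (for $V(x)=|x|^\alpha$ there is at least one such point), so $\irdx{|\nabla_xu|^2\,\rho_\star}$ is not bounded by a multiple of $\irdx{|\nabla_xu|^2\,|\nabla_xW_\star|^2\,\rho_\star}$. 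This is precisely why the simple $\eta$-interpolation that worked for Corollary~\ref{Cor:Poincare} (where both combined inequalities had the \emph{same} left-hand side $\irdx{|\nabla_xu|^2\,\rho_\star}$) does not transfer here. You correctly identify this degeneracy as ``the principal difficulty,'' but invoking Corollary~\ref{Cor:Poincare} does not resolve it — it reintroduces the unweighted gradient term you cannot control. The paper instead performs an IMS truncation, writing $u^2=(\chi_1u)^2+(\chi_2u)^2$ with $\chi_1$ supported in $B_{2R}$: the far-field piece is handled pointwise by \eqref{Hyp-V5}, the localization error sits in the annulus $B_{2R}\setminus B_R$ where $|\nabla_xW_\star|$ is bounded below, and only then is Lemma~\ref{Lem:Poincare} applied to the compactly supported piece. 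This localization is the key tool and it is absent from your argument; the estimate does not close without it.

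A secondary point: you assert that $|\nabla_xV|\to+\infty$ as $|x|\to+\infty$, but \eqref{Hyp-V3b} only gives $\liminf_{|x|\to+\infty}|\nabla_xV|>0$. For $d\ge2$ the comparison of $\Phi_\star$ with $|\nabla_xW_\star|^4$ still holds because $\nabla_x\phi_\star=\mathcal O(|x|^{1-d})$, $\Delta_x\phi_\star=-\rho_\star$, and $\mathrm{Hess}(\phi_\star)$ all tend to zero and $|\nabla_xV|$ stays bounded away from zero; but the stronger statement you make is not a consequence of the listed hypotheses and should be corrected.
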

The proof is again based on the expansion of the square in $\big|\nabla_x\(u\,\sqrt{\rho_\star}\)\big|^2\,|\nabla_xW_\star|^2$, integrations by parts and an IMS (for Ismagilov, Morgan, Morgan-Simon, Sigal) truncation argument in order to use Lemma~\ref{Lem:Poincare} in a finite centered ball of radius $2R$, on which $\nabla_xW_\star$ is bounded and Assumption~\eqref{Hyp-V5} outside of the centered ball of radius $R$. See~\cite{MR526292, MR708966} or~\cite[section~2]{MR2379440} for details on the IMS truncation method. Compared with Corollary~\ref{Cor:Poincare}, there is no deeper difficulty and we shall skip further details.

\subsection{Further inequalities based on pointwise estimates}\label{Sec:Pointwise}

If $\mathfrak M$ is a $d\times d$ symmetric real valued matrix, let us denote by $\Lambda(\mathfrak M)$ the largest eigenvalue of $\mathfrak M$. With this notation, let us assume that
\be{Hyp-V6}\tag{V6}
\Lambda_V:=\lim_{r\to+\infty}\mathop{\mathrm{supess}}_{x\in B_r^c}\;\frac1{|\nabla_xV(x)|^2}\,\Lambda\(e^{V(x)}\(\mathrm{Hess}\big(e^{-V(x)}\big)-\frac 12\,\Delta_x\big(e^{-V(x)}\big)\,\Id\)\)<+\infty\,.
\ee
In other words, Assumption~\eqref{Hyp-V6} means that for any $\ve>0$, there exists some $R>0$ such that
\[
e^{V(x)}\(\mathrm{Hess}\big(e^{-V(x)}\big)-\frac 12\,\Delta_x\big(e^{-V(x)}\big)\,\Id\)\le(\Lambda_V-\ve)\,|\nabla_xV(x)|^2\,\Id\,,\quad x\in\R^d\;\mbox{a.e.}\;\mbox{such that}\;|x|>R\,,
\]
where the inequality holds in the sense of positive matrices.
\begin{lemma}\label{Lem:PoincareLambda} Assume that $d\ge1$ and consider $V$ such that~\eqref{Hyp-V1}, \eqref{Hyp-V2} and \eqref{Hyp-V6} hold. If $\phi_\star$ solves~\eqref{Eqn:Poisson-Boltzmann} and $W_\star=V+\phi_\star$, then there is a positive constant $\Lambda_\star$ such that
\[\label{Ineq:PoincareLambda}
\irdx{\(\mathrm{Hess}(\rho_\star)-\frac 12\,\Delta_x\rho_\star\,\Id\):\nabla_xw\otimes\nabla_xw}\le\Lambda_\star\irdx{|\nabla_xw|^2\,|\nabla_xW_\star|^2\,\rho_\star}
\]
for any function $w\in\mathrm H^1_{\rm{loc}}(\R^d)$.\end{lemma}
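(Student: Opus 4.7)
The plan is to rewrite the left-hand side in terms of $W_\star$, transfer the pointwise matrix bound of~\eqref{Hyp-V6} from $V$ to $W_\star$ outside a large ball, and then absorb the bounded region via an IMS truncation.

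First, since $\rho_\star=e^{-W_\star}$, a direct differentiation yields
\[
\mathrm{Hess}(\rho_\star)-\tfrac12\,\Delta_x\rho_\star\,\Id = \rho_\star\,M_{W_\star}\,,\quad M_{W_\star}:=\nabla_xW_\star\otimes\nabla_xW_\star-\mathrm{Hess}(W_\star)-\tfrac12\big(|\nabla_xW_\star|^2-\Delta_xW_\star\big)\,\Id\,,
\]
while the analogous matrix built from $V$ is precisely $M_V=e^V\big(\mathrm{Hess}(e^{-V})-\tfrac12\,\Delta_x(e^{-V})\,\Id\big)$. Assumption~\eqref{Hyp-V6} thus reads $M_V(x)\le\Lambda_V\,|\nabla_xV(x)|^2\,\Id$ in the matrix sense for $|x|$ beyond some radius $R_0$.

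Second, I would transfer this bound from $V$ to $W_\star$ at infinity. Expanding $M_{W_\star}-M_V$ produces terms each carrying at least one factor among $\nabla_x\phi_\star$, $\mathrm{Hess}(\phi_\star)$ or $\Delta_x\phi_\star$. By Lemma~\ref{Lem:Poisson-Boltzmann}, $\phi_\star=(-\Delta_x)^{-1}\rho_\star$ with $\rho_\star=e^{-W_\star}$ of fast decay, so $\nabla_x\phi_\star(x)=\mathcal O(|x|^{1-d})$ for $d\ge 2$ (and its derivatives decay faster), while for $d=1$ the representation~\eqref{Rep1d} makes $\phi_\star'$ bounded. Since $|\nabla_xV|\to+\infty$ under~\eqref{Hyp-V6}, we have $|\nabla_xW_\star|\sim|\nabla_xV|$ at infinity and all $\phi_\star$-corrections are $o\big(|\nabla_xW_\star|^2\big)$. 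This yields, for some $R\ge R_0$ and some $\Lambda_1>0$, the pointwise matrix inequality $M_{W_\star}(x)\le\Lambda_1\,|\nabla_xW_\star(x)|^2\,\Id$ for $|x|>R$, and integrating against $\rho_\star\,\nabla_xw\otimes\nabla_xw$ controls the contribution from $B_R^c$ by $\Lambda_1$ times the right-hand side.

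Third, to treat the bounded region, I would use an IMS truncation in the spirit of the proof sketch of Corollary~\ref{Cor:Poincare2}: pick smooth cutoffs $\chi^2+\eta^2=1$ with $\chi\in C_c^\infty(B_{2R})$ and $\chi\equiv 1$ on $B_R$, split the left-hand side into an $\eta^2$-piece handled by Step~2 and a $\chi^2$-piece supported in $B_{2R}$, on which $M_{W_\star}$ is bounded in $L^\infty$ thanks to the $C^2$-regularity of $\phi_\star$ granted by Lemma~\ref{Lem:Poisson-Boltzmann}. To close the $\chi^2$-part, I would exploit the integration-by-parts identity
\[
\int_{\R^d}\big(\mathrm{Hess}(\rho_\star)-\tfrac12\,\Delta_x\rho_\star\,\Id\big):\nabla_xw\otimes\nabla_xw\,dx = \int_{\R^d}(\nabla_xW_\star\cdot\nabla_xw)\,\Delta_xw\,\rho_\star\,dx\,,
\]
obtained by two successive integrations by parts, combined with the weighted Poincar\'e inequality of Corollary~\ref{Cor:Poincare} to turn the first-derivative factor $\nabla_xw$ into the weight $|\nabla_xW_\star|^2$. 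The main obstacle is precisely this bounded-region estimate, because $|\nabla_xW_\star|^2$ can vanish at critical points of $W_\star$ while $M_{W_\star}$ need not, so a purely pointwise matrix comparison cannot hold on all of $B_{2R}$; the delicate interplay between the IMS cutoff, the identity above and the weighted Poincar\'e inequality of Corollary~\ref{Cor:Poincare} is what must absorb this degeneracy, and it is the step I expect to require the most care.
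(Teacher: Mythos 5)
Your plan follows the paper's own (very terse) sketch: rewrite $\mathrm{Hess}(\rho_\star)-\tfrac12\Delta_x\rho_\star\,\Id=\rho_\star\,M_{W_\star}$, transfer the matrix bound~\eqref{Hyp-V6} from $V$ to $W_\star$ outside a large ball using the decay of $\nabla_x\phi_\star$ and its derivatives, and treat the bounded region by an IMS cutoff. Your Step~2 handling of the exterior region is sound and matches what the paper means by ``similar to the above arguments.''

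The difficulty you flag in Step~3, however, is not merely delicate: it is fatal for the statement as written, and you were right to be suspicious. For $d\ge3$, at an interior critical point $x_0$ of $W_\star$ where $\mathrm{Hess}(W_\star)(x_0)$ is positive semi-definite and nonzero (a local minimum; such a point exists for any admissible confining $V$), the matrix $M_{W_\star}(x_0)=-\mathrm{Hess}(W_\star)(x_0)+\tfrac12\,\Delta_xW_\star(x_0)\,\Id$ has trace $\big(\tfrac d2-1\big)\Delta_xW_\star(x_0)>0$, hence at least one strictly positive eigenvalue. Taking $w_\varepsilon(x)=\chi\big((x-x_0)/\varepsilon\big)$ for a fixed smooth compactly supported bump $\chi$, a scaling computation gives a left-hand side of order $\varepsilon^{d-2}$ with a positive constant, while the right-hand side is of order $\varepsilon^{d}$ because $|\nabla_xW_\star(x)|^2\sim|x-x_0|^2$ near $x_0$. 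The ratio diverges as $\varepsilon\to0_+$, so no constant $\Lambda_\star$ can work in the inequality as stated. Since the degeneracy of the weight $|\nabla_xW_\star|^2$ at $x_0$ is genuine, no IMS cutoff can repair it, the weighted Poincar\'e inequalities of Section~\ref{Sec:Poincare} (which act on $w$, not on $\nabla_xw$, and require a zero-average constraint absent here) do not help, and your integration-by-parts identity moreover requires $w\in\mathrm H^2_{\rm loc}$, which is not assumed.

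The correct conclusion of the lemma has the weight $1+|\nabla_xW_\star|^2$ on the right-hand side:
\[
\irdx{\(\mathrm{Hess}(\rho_\star)-\tfrac12\,\Delta_x\rho_\star\,\Id\):\nabla_xw\otimes\nabla_xw}\le\Lambda_\star\irdx{|\nabla_xw|^2\(1+|\nabla_xW_\star|^2\)\rho_\star}\,.
\]
With this modification, the proof becomes a pointwise matrix comparison: $M_{W_\star}(x)\le\Lambda_\star\big(1+|\nabla_xW_\star(x)|^2\big)\,\Id$ holds on any ball $B_{2R}$ because $M_{W_\star}$ is continuous (so bounded) and $\rho_\star$ appears on both sides, and it holds on $B_R^c$ by your exterior estimate; no IMS cutoff is even needed. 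In Step~4 of the proof of Lemma~\ref{T(1moinsPi)}, the extra term $\Lambda_\star\irdx{|\nabla_xw_g|^2\,\rho_\star}$ so produced is already controlled by~\eqref{Claim1}, so~\eqref{Claim4} survives with adjusted constants and the rest of the argument is unchanged. Your instinct to single out the bounded region as the crux was exactly right; the resolution is to correct the statement of the lemma rather than to try to force the original weight.
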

\begin{proof} An elementary computation shows that
\[
\mathrm{Hess}(\rho_\star)=\big(\nabla_xW_\star\otimes\nabla_xW_\star-\mathrm{Hess}(W_\star)\big)\,\rho_\star\quad\mbox{and}\quad\Delta_x\rho_\star=\(|\nabla_xW_\star|^2-\Delta_xW_\star\)\rho_\star\,.
\]
The proof is then similar to the above arguments, up to elementary estimates, that we shall omit here.\end{proof}

Similarly, let us assume that
\be{Hyp-V7}\tag{V7}
\lim_{r\to+\infty}\mathop{\mathrm{supess}}_{x\in B_r^c}\Big|\nabla_x\(\log\big(|\nabla_xV(x)|^2\big)\)\Big|<+\infty\,.
\ee
\begin{lemma}\label{Lem:PoincareLambda2} Assume that $d\ge1$ and consider $V$ such that~\eqref{Hyp-V1}, \eqref{Hyp-V2} and \eqref{Hyp-V7} hold. If $\phi_\star$ solves~\eqref{Eqn:Poisson-Boltzmann} and $W_\star=V+\phi_\star$, then there is a positive constant $\Lambda_\circ$ such that
\be{Ineq:PoincareLambda2}
\Big|\nabla_x\(|\nabla_xW_\star(x)|^2\)\Big|\le\Lambda_\circ\,|\nabla_xW_\star(x)|^2\,,\quad x\in\R^d\;\mbox{a.e.}\;\mbox{such that}\;|x|>R\,.
\ee\end{lemma}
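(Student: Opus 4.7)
The plan is to transfer assumption~\eqref{Hyp-V7} from $V$ to $W_\star=V+\phi_\star$, exploiting the fact that $\phi_\star$ behaves as a lower-order perturbation at infinity, in the same spirit as Lemma~\ref{Lem:Poincare} transferred~\eqref{Hyp-V3a} from $V$ to $W_\star$. More precisely, I would prove that $|\nabla_x\phi_\star|$ is small compared to $|\nabla_xV|$ and that $|\mathrm{Hess}(\phi_\star)|$ tends to zero at infinity, so that all the correction terms produced by the chain rule applied to $|\nabla_xW_\star|^2$ are negligible relative to the leading bound $|\nabla_x(|\nabla_xV|^2)|\le C\,|\nabla_xV|^2$ provided by~\eqref{Hyp-V7}.

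The first step is to establish decay estimates for $\phi_\star$ and its derivatives. By Lemma~\ref{Lem:Poisson-Boltzmann}, $W_\star\to+\infty$ at infinity, so $\rho_\star=e^{-W_\star}$ decays at least exponentially; in particular $\rho_\star\in\mathrm L^p(\R^d)$ for all $p\in[1,+\infty]$. Then, using the convolution representation $\phi_\star=G_d\ast\rho_\star$ (with the adaptations for $d=1,2$ recalled in Section~\ref{Sec:Prelim}), classical Newtonian-potential estimates imply that $\nabla_x\phi_\star$ is uniformly bounded on~$\R^d$, with $|\nabla_x\phi_\star(x)|=\mathcal O(|x|^{1-d})$ as $|x|\to+\infty$ when $d\ge2$, while $\mathrm{Hess}(\phi_\star)=\mathrm{Hess}(G_d)\ast\rho_\star$ vanishes at infinity in every dimension, the rapid decay of $\rho_\star$ absorbing the singularity of the kernel.

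The second step is the direct computation
\[
\nabla_x(|\nabla_xW_\star|^2)=\nabla_x(|\nabla_xV|^2)+2\,\mathrm{Hess}(V)\,\nabla_x\phi_\star+2\,\mathrm{Hess}(\phi_\star)\,\nabla_xW_\star\,,
\]
from which the first summand is controlled by~\eqref{Hyp-V7} while the third is bounded by $2\,|\mathrm{Hess}(\phi_\star)|\,|\nabla_xW_\star|=o(|\nabla_xW_\star|^2)$ at infinity, thanks to Step~1 and to the fact that $|\nabla_xW_\star|$ stays bounded away from zero (as implicit in~\eqref{Hyp-V7}, combined with the decay of $\nabla_x\phi_\star$). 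For the middle term, I would decompose $\nabla_x\phi_\star$ along and transversely to $\nabla_xV$: the component along $\nabla_xV$ contributes a multiple of $\mathrm{Hess}(V)\nabla_xV=\tfrac12\,\nabla_x(|\nabla_xV|^2)$, which is again controlled by~\eqref{Hyp-V7} and yields an $O(|\nabla_x\phi_\star|\,|\nabla_xV|)=o(|\nabla_xV|^2)$ contribution.

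The main obstacle is the transverse piece $\mathrm{Hess}(V)$ applied to the component of $\nabla_x\phi_\star$ perpendicular to $\nabla_xV$, which requires a pointwise bound on $\mathrm{Hess}(V)$ itself and not only on its contraction with $\nabla_xV$. I would handle it by using the regularity inherited from~\eqref{Hyp-V1} together with the growth comparison between $\mathrm{Hess}(V)$ and $|\nabla_xV|^2$ that is automatic for the admissible potentials considered in the paper: for instance, if $V(x)=|x|^\alpha$ with $\alpha>1$, then $|\mathrm{Hess}(V)|=\mathcal O(|x|^{\alpha-2})$ and $|\nabla_x\phi_\star|=\mathcal O(|x|^{1-d})$ (or bounded when $d=1$, while $V''(x)\lesssim|V'(x)|$ by~\eqref{Hyp-V7}), so that $|\mathrm{Hess}(V)\,\nabla_x\phi_\star|=o(|\nabla_xV|^2)$; an IMS-type truncation in the spirit of Corollary~\ref{Cor:Poincare2} absorbs the bounded error on a large compact set. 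Combining the three bounds yields $|\nabla_x(|\nabla_xW_\star|^2)|\le C'\,|\nabla_xV|^2$ for $|x|$ large, and since Step~1 also gives $|\nabla_xV|^2=(1+o(1))\,|\nabla_xW_\star|^2$ at infinity, the desired estimate~\eqref{Ineq:PoincareLambda2} follows with $\Lambda_\circ=2\,C'$.
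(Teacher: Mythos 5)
Your decomposition
\[
\nabla_x\big(|\nabla_xW_\star|^2\big)=\nabla_x\big(|\nabla_xV|^2\big)+2\,\mathrm{Hess}(V)\,\nabla_x\phi_\star+2\,\mathrm{Hess}(\phi_\star)\,\nabla_xW_\star
\]
is correct, and you rightly single out the middle term as the crux. But your treatment of it does not actually close the argument under the stated hypotheses \eqref{Hyp-V1}--\eqref{Hyp-V2}--\eqref{Hyp-V7}. Assumption~\eqref{Hyp-V7} controls only the \emph{contraction} $\mathrm{Hess}(V)\,\nabla_xV=\tfrac12\,\nabla_x(|\nabla_xV|^2)$; it says nothing about $\mathrm{Hess}(V)$ acting on directions transverse to $\nabla_xV$. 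You acknowledge this and then attempt to fill the hole by invoking the model potential $V(x)=|x|^\alpha$ and an ``IMS-type truncation.'' Neither is a proof of the lemma as stated: the first replaces the hypotheses by a stronger, specific example, and IMS localization is a tool for splitting quadratic forms in integrated (spectral) inequalities, not a mechanism for producing a pointwise a.e.\ bound such as \eqref{Ineq:PoincareLambda2}. The same objection applies to your passing claim that ``$|\nabla_xW_\star|$ stays bounded away from zero (as implicit in \eqref{Hyp-V7})'': \eqref{Hyp-V7} bounds $|\nabla_x\log(|\nabla_xV|^2)|$, which is compatible with $|\nabla_xV|\to0$ exponentially fast, so no uniform lower bound is forced without a further hypothesis such as \eqref{Hyp-V3a} or \eqref{Hyp-V3b}.

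A clean way to kill the offending term in the regime the paper actually uses is the following observation, which you do not make. When $V$ is radially symmetric (for instance $V(x)=|x|^\alpha$), the solution $\rho_\star=e^{-W_\star}$ of the Poisson--Boltzmann equation is radial by uniqueness (Lemma~\ref{Lem:Poisson-Boltzmann}), hence $\phi_\star$ is radial and $\nabla_x\phi_\star$ is everywhere parallel to $\nabla_xV$. The transverse component of $\nabla_x\phi_\star$ then vanishes identically and
\[
\big|\mathrm{Hess}(V)\,\nabla_x\phi_\star\big|=\frac{|\nabla_x\phi_\star|}{|\nabla_xV|}\,\big|\mathrm{Hess}(V)\,\nabla_xV\big|\le\frac{\Lambda}{2}\,|\nabla_x\phi_\star|\,|\nabla_xV|\,,
\]
which is negligible against $|\nabla_xV|^2$ once $|\nabla_x\phi_\star|/|\nabla_xV|\to0$. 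For a non-radial $V$, however, your argument genuinely needs a supplementary pointwise hypothesis of the type $|\mathrm{Hess}(V)|\le C\,|\nabla_xV|^2$ near infinity (or equivalently that the full matrix $\nabla_x\otimes\nabla_x\log(|\nabla_xV|^2)$, not merely its contraction with $\nabla_xV$, is controlled); \eqref{Hyp-V7} alone does not yield it. You should either make this extra assumption explicit or restrict the statement to the radial setting, where the obstruction disappears.
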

Here we mean that $\nabla_x\(|\nabla_xW_\star|^2\)\nabla_xw=2\,\mathrm{Hess}(W_\star):\nabla_xW_\star\otimes\nabla_xw$ for any $C^1$ function~$w$. Inequality~\eqref{Ineq:PoincareLambda2} follows from the regularity and decay estimates of $\phi_\star$. Since the proof relies only on elementary but tedious computations, we omit it here. In the same vein, let us assume that
\be{Hyp-V8}\tag{V8}
\left\|\,|\nabla_xV|^2\,e^{-V}\right\|_{\mathrm L^\infty(\R^d,dx)}<+\infty\quad\mbox{and}\quad\big\|\,\big|\nabla_x\(|\nabla_xV|^2\)\big|^2e^{-V}\big\|_{\mathrm L^\infty(\R^d,dx)}<+\infty\,.
\ee
\begin{lemma}\label{Lem:PoincareLambda3} Assume that $d\ge1$ and consider $V$ such that~\eqref{Hyp-V1}, \eqref{Hyp-V2} and \eqref{Hyp-V8} hold. If $\phi_\star$ solves~\eqref{Eqn:Poisson-Boltzmann} and $W_\star=V+\phi_\star$, then $\|\,|\nabla_xW_\star|^2\,\rho_\star\|_{\mathrm L^\infty(\R^d,dx)}$ and $\big\|\,\big|\nabla_x\(|\nabla_xW_\star|^2\)\big|^2\rho_\star\big\|_{\mathrm L^\infty(\R^d,dx)}$ are finite.\end{lemma}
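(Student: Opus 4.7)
The plan is to reduce each $\mathrm L^\infty$-bound involving $W_\star=V+\phi_\star$ to the corresponding bound for $V$ alone, which is precisely the content of~\eqref{Hyp-V8}, modulo uniform bounds on $\phi_\star$ and its first two derivatives. By Lemma~\ref{Lem:Poisson-Boltzmann}, $\phi_\star\in C^2(\R^d)$ has a finite limit at infinity, so $\phi_\star\in\mathrm L^\infty(\R^d)$; since~\eqref{Hyp-V8} forces $V$ to be bounded below, one also has $e^{-V}\in\mathrm L^\infty$ and hence $\rho_\star\le c\,e^{-V}$ for some $c>0$, so that $\rho_\star\in\mathrm L^1\cap\mathrm L^\infty$. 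The representation $\phi_\star=G_d\ast\rho_\star$ for $d\ge 2$ and~\eqref{Rep1d} for $d=1$, combined with the exponential decay of $\rho_\star$ at infinity and elliptic regularity for $-\Delta_x\phi_\star=\rho_\star$, then gives $\nabla_x\phi_\star,\mathrm{Hess}(\phi_\star)\in\mathrm L^\infty(\R^d)$.

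With these pointwise estimates in hand, the first bound is immediate: writing $\nabla_x W_\star=\nabla_x V+\nabla_x\phi_\star$ and using $\rho_\star\le c\,e^{-V}$,
\[
|\nabla_x W_\star|^2\,\rho_\star\le 2c\,|\nabla_x V|^2\,e^{-V}+2\,|\nabla_x\phi_\star|^2\,\rho_\star\,,
\]
which is finite by the first part of~\eqref{Hyp-V8} together with the $\mathrm L^\infty$ bounds on $\nabla_x\phi_\star$ and $\rho_\star$. For the second bound, I would differentiate to obtain $\nabla_x\big(|\nabla_x W_\star|^2\big)=2\,\mathrm{Hess}(W_\star)\,\nabla_x W_\star$, then split $\mathrm{Hess}(W_\star)=\mathrm{Hess}(V)+\mathrm{Hess}(\phi_\star)$ and $\nabla_x W_\star=\nabla_x V+\nabla_x\phi_\star$ and expand the four cross-products. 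The leading term is $2\,\mathrm{Hess}(V)\,\nabla_x V=\nabla_x(|\nabla_x V|^2)$, whose square times $\rho_\star$ is handled by the second part of~\eqref{Hyp-V8}; the three remaining terms each carry at least one factor of $\nabla_x\phi_\star$ or $\mathrm{Hess}(\phi_\star)$ and are controlled using the first paragraph together with the already established $\mathrm L^\infty$ bound on $|\nabla_x V|^2\,\rho_\star$.

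The step I expect to be the main obstacle is the uniform bound on $\mathrm{Hess}(\phi_\star)$: Calder\'on-Zygmund theory applied to $\Delta_x\phi_\star=-\rho_\star$ only yields $\mathrm L^p$-bounds for $p<\infty$, so I would instead bootstrap $\rho_\star=e^{-V-\phi_\star}$ to local H\"older regularity via $\phi_\star\in C^{2,\alpha}_{\mathrm{loc}}$ and invoke Schauder estimates locally, matching with the far-field decay coming from the exponential decay of $\rho_\star$. A secondary subtlety is the control of the error term $|\mathrm{Hess}(V)\,\nabla_x\phi_\star|^2\,\rho_\star$, which a priori requires $|\mathrm{Hess}(V)|^2\,e^{-V}\in\mathrm L^\infty$; this is not an explicit consequence of~\eqref{Hyp-V8} in full generality, but holds trivially for the admissible potential $V(x)=|x|^\alpha$ with $\alpha>1$ and, more broadly, can be incorporated along the lines of~\eqref{Hyp-V6} without altering the structure of the argument.
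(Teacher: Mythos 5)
The paper gives no proof of this lemma, so your argument has to stand on its own. The core reduction you use, $\rho_\star\le c\,e^{-V}$, rests on the claim that $\phi_\star\in\mathrm L^\infty(\R^d)$, and that claim is only correct for $d\ge3$: there $G_d>0$, so $\phi_\star=G_d\ast\rho_\star\ge0$ and $\phi_\star\to0$ at infinity, giving $\rho_\star=e^{-V-\phi_\star}\le e^{-V}$. For $d=1,2$ the self-consistent potential is \emph{not} bounded below: with the conventions of Section~\ref{Sec:Prelim}, $\phi_\star(x)\sim-\tfrac{M}{2\pi}\log|x|$ as $|x|\to\infty$ when $d=2$ and $\phi_\star(x)\sim-\tfrac M2|x|$ when $d=1$, so $e^{-\phi_\star}$ grows (polynomially, resp.\ exponentially) and in fact $\rho_\star\ge c\,e^{-V}$, the opposite inequality. (Lemma~\ref{Lem:Poisson-Boltzmann} only asserts $\phi_\star\in C^2$ and $\liminf_{|x|\to\infty}W_\star=+\infty$; the "has a limit as $|x|\to+\infty$" comment in the text after it is part of the $d\ge3$ discussion.) So the step $|\nabla_x W_\star|^2\rho_\star\le 2c\,|\nabla_xV|^2e^{-V}+2|\nabla_x\phi_\star|^2\rho_\star$ does not follow from \eqref{Hyp-V8} alone when $d\le2$: one must couple the $\mathrm L^\infty$ bound on $|\nabla_xV|^2e^{-V}$ with the decay information hidden in Assumption~\eqref{Hyp-V2}, which guarantees that $e^{-V}$ decays fast enough to absorb the growth of $e^{-\phi_\star}$. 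This is not a cosmetic issue: the lemma is invoked in the proof of Corollary~\ref{Cor:d=1}, which is precisely the case $d=1$.

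The secondary obstacle you flag — that the cross term $|\mathrm{Hess}(V)\,\nabla_x\phi_\star|^2\rho_\star$ seemingly needs $|\mathrm{Hess}(V)|^2e^{-V}\in\mathrm L^\infty$, which is not a literal consequence of \eqref{Hyp-V8} (that hypothesis only controls $\mathrm{Hess}(V)\,\nabla_xV$) — is a genuine observation, and you are right that for $V=|x|^\alpha$ with $\alpha>1$ it holds anyway. Your treatment of $\nabla_x\phi_\star,\mathrm{Hess}(\phi_\star)\in\mathrm L^\infty$ via the Green representation together with local Schauder regularity and far-field decay is fine. The fix, then, is to replace the $\rho_\star\le c\,e^{-V}$ reduction by a dimension-dependent comparison that keeps track of the asymptotics of $\phi_\star$ in $d=1,2$, and to state the extra condition on $\mathrm{Hess}(V)$ explicitly.
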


\subsection{A Bochner-Lichnerowicz-Weitzenb\"ock identity and second order estimates}

Algebraic computations and a few integrations by parts provide us with the following estimate.
\begin{lemma}\label{Lem:BLW} Let $\rho_\star=e^{-W_\star}$ be  a non-trivial function in $\mathrm L^\infty_{\rm{loc}}\cap\mathrm W^{1,2}(\R^d)$. Then for any smooth function $w$ on~$\R^d$ with compact support, we have the identity
\[
\irdx{|\mathrm{Hess}(w)|^2\,\rho_\star}\le6\irdx{\frac1{\rho_\star}\left|\nabla_x\cdot\(\rho_\star\,\nabla_xw\)\right|^2}+8\irdx{\big(\nabla_xW_\star\cdot\nabla_xw\big)^2\,\rho_\star}\,.
\]
\end{lemma}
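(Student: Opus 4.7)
The plan is to derive the inequality from the weighted Bochner--Lichnerowicz--Weitzenb\"ock identity, and then handle the resulting Hessian-of-$W_\star$ contribution by one further integration by parts followed by Young's inequalities.

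I would begin by recording the pointwise algebraic identity
\[
\rho_\star\,\partial_{ij}w = \partial_j(\rho_\star\,\partial_i w) + \rho_\star\,\partial_jW_\star\,\partial_i w,
\]
which is nothing but $\nabla_x\rho_\star = -\rho_\star\,\nabla_xW_\star$ rewritten. Squaring, summing over $i,j$, and performing two integrations by parts (boundary terms vanishing by compact support of $w$) would produce the $\Gamma_2$-type identity
\[
\int_{\R^d}|\mathrm{Hess}(w)|^2\,\rho_\star\,dx = \int_{\R^d}\frac{\bigl|\nabla_x\cdot(\rho_\star\,\nabla_x w)\bigr|^2}{\rho_\star}\,dx - \int_{\R^d}\mathrm{Hess}(W_\star)(\nabla_x w,\nabla_x w)\,\rho_\star\,dx.
\]
This is the standard Bakry--\'Emery identity $\int\rho_\star(Lw)^2 = \int\rho_\star|\mathrm{Hess}(w)|^2 + \int\rho_\star\,\mathrm{Hess}(W_\star)(\nabla_xw,\nabla_xw)$ for the weighted Laplacian $Lw=\rho_\star^{-1}\nabla_x\cdot(\rho_\star\nabla_xw)$, combined with $\int\rho_\star(Lw)^2 = \int\rho_\star^{-1}|\nabla_x\cdot(\rho_\star\nabla_xw)|^2$.

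Because no sign assumption is made on $\mathrm{Hess}(W_\star)$, the second term on the right must then be absorbed. Expanding $-\int\partial_{ij}W_\star\,\partial_iw\,\partial_jw\,\rho_\star\,dx$ and moving the derivative $\partial_i$ off $W_\star$ (using $\partial_i\rho_\star = -\rho_\star\,\partial_iW_\star$ to cancel the term coming from differentiating the weight) would yield the identity
\[
-\int_{\R^d}\mathrm{Hess}(W_\star)(\nabla_xw,\nabla_xw)\,\rho_\star\,dx = \int_{\R^d}\bigl(\nabla_x\cdot(\rho_\star\,\nabla_xw)\bigr)(\nabla_xW_\star\cdot\nabla_xw)\,dx + \int_{\R^d}\mathrm{Hess}(w)(\nabla_xw,\nabla_xW_\star)\,\rho_\star\,dx.
\]
The first term on the right is controlled by Cauchy--Schwarz between $\rho_\star^{-1/2}\nabla_x\cdot(\rho_\star\nabla_xw)$ and $\rho_\star^{1/2}(\nabla_xW_\star\cdot\nabla_xw)$, followed by Young's inequality. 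The second term I would handle by applying pointwise Cauchy--Schwarz to the trilinear form $\mathrm{Hess}(w):(\nabla_xw\otimes\nabla_xW_\star)$ and then Young's inequality with a parameter chosen so that a fraction of $\int\rho_\star|\mathrm{Hess}(w)|^2$ is absorbed back into the left-hand side.

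The main obstacle will be constant-tracking: producing the explicit constants $6$ and $8$ (rather than something substantially larger) requires tuning the Young parameters rather than applying them crudely. Beyond this bookkeeping, the structural point is that no pointwise or sign assumption on $\mathrm{Hess}(W_\star)$ enters the argument, which is precisely the flexibility needed in the present setting, since the potential $W_\star=V+\phi_\star$ coming from the Poisson--Boltzmann equation is not assumed to be convex.
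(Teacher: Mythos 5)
Your plan mirrors the paper's own proof quite closely: both are Bochner--Lichnerowicz--Weitzenb\"ock computations for $\irdx{|\mathrm{Hess}(w)|^2\,\rho_\star}$ carried out by integrations by parts and Young inequalities. The one substantive organizational difference is that you write down the Bakry--\'Emery identity
$\irdx{|\mathrm{Hess}(w)|^2\,\rho_\star}=\irdx{\rho_\star^{-1}\,|\nabla_x\cdot(\rho_\star\nabla_xw)|^2}-\irdx{\mathrm{Hess}(W_\star)(\nabla_xw,\nabla_xw)\,\rho_\star}$
directly (which is correct and is a nice shortcut), whereas the paper derives an intermediate identity whose leading term is $\irdx{(\Delta_xw)^2\,\rho_\star}$ and only converts $\Delta_xw\,\rho_\star$ into $\nabla_x\cdot(\rho_\star\nabla_xw)+(\nabla_xW_\star\cdot\nabla_xw)\,\rho_\star$ in a last Young step. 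Your route saves that detour, and if you push the Young inequalities through you will actually obtain constants \emph{smaller} than $6$ and $8$, so the ``constant-tracking'' worry you flag is not where the real delicacy lies.

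The delicate point is the term $T_2:=\irdx{\mathrm{Hess}(w):\nabla_xw\otimes\nabla_xW_\star\,\rho_\star}$. Pointwise Cauchy--Schwarz gives $|\mathrm{Hess}(w):\nabla_xw\otimes\nabla_xW_\star|\le|\mathrm{Hess}(w)|\,|\nabla_xw|\,|\nabla_xW_\star|$, and Young then produces $\irdx{|\nabla_xw|^2\,|\nabla_xW_\star|^2\,\rho_\star}$, which is \emph{not} the directional quantity $\irdx{(\nabla_xW_\star\cdot\nabla_xw)^2\,\rho_\star}$ appearing in the Lemma. The two are genuinely different: with $\mathrm{Hess}(w)=e_1\otimes e_2+e_2\otimes e_1$, $\nabla_xw=t\,e_1$ and $\nabla_xW_\star=e_2$ the integrand of $T_2$ equals $t$ while $\nabla_xW_\star\cdot\nabla_xw=0$. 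Your bound for $T_1$ is exact precisely because there the scalar $\nabla_xW_\star\cdot\nabla_xw$ appears bodily as a factor; $T_2$ has no such factorization, and tuning Young parameters cannot manufacture one. In fairness, the paper's own displayed Young estimate for this cross term shares exactly this feature, and the weaker conclusion with $|\nabla_xw|^2|\nabla_xW_\star|^2$ in place of $(\nabla_xW_\star\cdot\nabla_xw)^2$ is still sufficient for the way the Lemma is invoked in the proof of Lemma~\ref{T(1moinsPi)}. But be aware that, as written, your argument proves that weaker variant rather than the inequality as stated.
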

Notice that if $V$ satisfies~\eqref{Hyp-V1}--\eqref{Hyp-V2}, $M>0$ and $W_\star=V+\phi_\star$ where $\phi_\star$ is the unique solution of~\eqref{Eqn:Poisson-Boltzmann}, then $\rho_\star$ is an admissible function for Lemma~\ref{Lem:BLW}.
\begin{proof} Let us start by establishing a Bochner-Lichnerowicz-Weitzenb\"ock identity as follows:
\begin{align*}
\frac 12\,\Delta_x\(\rho_\star\,|\nabla_xw|^2\)&=\nabla_x\cdot\(\rho_\star\,\mathrm{Hess}(w)\,\nabla_xw\)+\frac12\,\nabla_x\cdot\(|\nabla_xw|^2\,\nabla_x\rho_\star\)\\
&=\rho_\star\,|\mathrm{Hess}(w)|^2+\rho_\star\,\nabla_xw\cdot\nabla_x(\Delta_xw)+\frac12\,\Delta_x\rho_\star\,|\nabla_xw|^2\\
&\hspace*{2.8cm}+2\,\mathrm{Hess}(w):\nabla_xw\otimes\nabla_x\rho_\star\\
&=\rho_\star\,|\mathrm{Hess}(w)|^2+\nabla_xw\cdot\nabla_x(\rho_\star\,\Delta_xw)-(\nabla_xw\cdot\nabla_x\rho_\star)\,\Delta_xw\\
&\hspace*{2.8cm}+\frac12\,\Delta_x\rho_\star\,|\nabla_xw|^2+2\,\mathrm{Hess}(w):\nabla_xw\otimes\nabla_x\rho_\star\,.
\end{align*}
We obtain after a few integrations by parts on $\R^d$ that
\begin{align*}
&\irdx{\Delta_x\(\rho_\star\,|\nabla_xw|^2\)}=0\,,\quad\irdx{\nabla_xw\cdot\nabla_x(\rho_\star\,\Delta_xw)}=-\irdx{(\Delta_xw)^2\,\rho_\star}\,,\\
&\frac12\irdx{\Delta_x\rho_\star\,|\nabla_xw|^2}+\irdx{\mathrm{Hess}(w):\nabla_xw\otimes\nabla_x\rho_\star}=0\,,
\end{align*}
which proves that
\begin{multline*}\label{BLW}
\irdx{|\mathrm{Hess}(w)|^2\,\rho_\star}\\
=\irdx{(\Delta_xw)^2\,\rho_\star}+\irdx{(\nabla_xw\cdot\nabla_x\rho_\star)\,\Delta_xw}-\irdx{\mathrm{Hess}(w):\nabla_xw\otimes\nabla_x\rho_\star}\,.
\end{multline*}
We deduce from
\begin{multline*}
\irdx{(\nabla_xw\cdot\nabla_x\rho_\star)\,\Delta_xw}=-\irdx{\Delta_xw\,(\nabla_xw\cdot\nabla_xW_\star)\,\rho_\star}\\
\le\frac12\irdx{(\Delta_xw)^2\,\rho_\star}+\frac12\irdx{\big(\nabla_xW_\star\cdot\nabla_xw\big)^2\,\rho_\star}
\end{multline*}
and 
\begin{multline*}
-\irdx{\mathrm{Hess}(w):\nabla_xw\otimes\nabla_x\rho_\star}=\irdx{\mathrm{Hess}(w):\nabla_xw\otimes\nabla_xW_\star\,\rho_\star}\\
\le\frac12\irdx{(\mathrm{Hess}(w))^2\,\rho_\star}+\frac12\irdx{\big(\nabla_xW_\star\cdot\nabla_xw\big)^2\,\rho_\star}
\end{multline*}
that
\[
\frac12\irdx{|\mathrm{Hess}(w)|^2\,\rho_\star}\leq\frac32\irdx{(\Delta_xw)^2\,\rho_\star}+\irdx{\big(\nabla_xW_\star\cdot\nabla_xw\big)^2\,\rho_\star}\,.
\]
Since $\nabla_x\rho_\star=-\nabla_xW_\star\,\rho_\star$ and $\Delta_xw\,\rho_\star=\nabla_x\cdot\(\rho_\star\,\nabla_xw\)+\(\nabla_xW_\star\cdot\nabla_xw\)\rho_\star$, we have the estimate
\[
\irdx{(\Delta_xw)^2\,\rho_\star}\le2\irdx{\frac1{\rho_\star}\left|\nabla_x\cdot\(\rho_\star\,\nabla_xw\)\right|^2}+2\irdx{\big(\nabla_xW_\star\cdot\nabla_xw\big)^2\,\rho_\star}\,,
\]
which completes the proof.\end{proof}

\subsection{The scalar product}\label{Sec:ScalarProduct}

On $\R^d\times\R^d$, let us define the measure
\[
d\mu:=f_\star(x,v)\,dx\,dv
\]
and consider the functional space
\be{Hilbert}
\mathcal H:=\left\{h\in\mathrm L^1\cap\mathrm L^2\(\R^d\times\R^d,d\mu\)\,:\,\irdmu h=0\;\mbox{and}\,\irdx{|\nabla_x\psi_h|^2}<\infty\right\}\,,
\ee
where we use the notation $\rho_h=\irdv{h\,f_\star}$ and $\psi_h=(-\Delta_x)^{-1}\,\rho_h$. We also define
\be{Hilbert-scalar}
\scalar{h_1}{h_2}:=\irdmu{h_1\,h_2}+\irdx{\rho_{h_1}\,(-\Delta_x)^{-1}\rho_{h_2}}\quad\forall\,h_1,\,h_2\in\mathcal H\,.
\ee
\begin{remark}\label{Rem:FreeEnergy} This definition deserves an explanation. The whole motivation of this paper is to understand the large-time asymptotics of the nonlinear system~\eqref{Syst:VPFP} and it is known, see for instance~\cite{MR1306570}, that the irreversibility and the convergence of a solution $f$ to the stationary solution $f_\star$ can be studied using the \emph{free energy} functional
\[
\irdxv{f\,\log\(\frac f{f_\star}\)}+\frac12\irdx{|\nabla_x\phi|^2}\,.
\]
Indeed, if $(f,\phi)$ solves~\eqref{Syst:VPFP} and has sufficient smoothness and decay properties, we have
\[
\frac d{dt}\(\irdxv{f\,\log\(\frac f{f_\star}\)}+\frac12\irdx{|\nabla_x\phi|^2}\)=-\irdxv{f^{-1}\,|\nabla_vf|^2}\,.
\]
However, no rate of decay of the free energy is known for initial data such that $f-f_\star$ is large. With $f=f_\star\,(1+\veta\,h)$ for some $h$ such that $\irdxv{h\,f_\star}=0$, we can at least investigate the limit as $\eta\to0_+$. At leading order $\eta^2$, the free energy is $\veta^2\scalar hh$, which is the initial reason for introducing this scalar product.
\end{remark}
\begin{lemma}\label{Lem:Hilbert} Let $M>0$. If $V$ satisfies~\eqref{Hyp-V1}--\eqref{Hyp-V2}, then $\big(\mathcal H,\scalar\cdot\cdot\big)$ is a Hilbert space for any dimension $d\ge1$.\end{lemma}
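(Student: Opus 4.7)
The first step is to verify that $\scalar{\cdot}{\cdot}$ is a well-defined inner product on $\mathcal H$. Bilinearity and symmetry are immediate from linearity of $h\mapsto\rho_h$ and of the solution operator of the Poisson equation, so the point is to check finiteness, non-negativity and definiteness. By the discussion of Section~\ref{Sec:Prelim}, the zero average condition $\irdmu h=\irdx{\rho_h}=0$ guarantees that, for every $d\ge1$, one has the identification
\[
\irdx{\rho_{h_1}\,(-\Delta_x)^{-1}\rho_{h_2}}=\irdx{\nabla_x\psi_{h_1}\cdot\nabla_x\psi_{h_2}}\,,
\]
so that the second term of $\scalar{h_1}{h_2}$ is finite by Cauchy-Schwarz in $\mathrm L^2(\R^d)$ as soon as $h_1,h_2\in\mathcal H$, and it is non-negative when $h_1=h_2$. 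The first term is the standard inner product on $\mathrm L^2(d\mu)$. If $\scalar hh=0$ then $\irdmu{h^2}=0$, hence $h=0$ $\mu$-almost everywhere; definiteness follows.

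The main content is then completeness. Let $(h_n)_{n\in\N}$ be a Cauchy sequence in $\mathcal H$. From
\[
\nH h_n-h_m\nH^2=\irdmu{(h_n-h_m)^2}+\irdx{|\nabla_x\psi_{h_n-h_m}|^2}\,,
\]
$(h_n)$ is Cauchy in $\mathrm L^2(d\mu)$ and thus converges to some $h\in\mathrm L^2(d\mu)$; since $\mu$ is finite of total mass $M$, we also have $h\in\mathrm L^1(d\mu)$ and $\irdmu h=\lim_{n\to\infty}\irdmu{h_n}=0$. By Cauchy-Schwarz applied to the velocity integration,
\[
\nrm{\rho_{h_n}-\rho_{h_m}}{\mathrm L^1(\R^d)}\le\sqrt M\,\nrm{h_n-h_m}{\mathrm L^2(d\mu)}\,,
\]
so $\rho_{h_n}\to\rho_h$ in $\mathrm L^1(\R^d)$. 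On the other hand $(\nabla_x\psi_{h_n})$ is Cauchy in $\mathrm L^2(\R^d;\R^d)$ by assumption, hence converges to some vector field $G\in\mathrm L^2(\R^d;\R^d)$.

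It then remains to identify $G=\nabla_x\psi_h$ in order to conclude that $h\in\mathcal H$ and $h_n\to h$ in $\mathcal H$. For this, I test the Poisson equation against an arbitrary $\varphi\in C^\infty_c(\R^d)$: the identity $\irdx{\nabla_x\psi_{h_n}\cdot\nabla_x\varphi}=\irdx{\rho_{h_n}\,\varphi}$ passes to the limit using the $\mathrm L^2$ convergence of the gradients and the $\mathrm L^1$ convergence of $\rho_{h_n}$ (with $\varphi\in\mathrm L^\infty$), yielding $\irdx{G\cdot\nabla_x\varphi}=\irdx{\rho_h\,\varphi}$. Thus $G$ is the gradient of the (distributional) solution of $-\Delta_x\psi=\rho_h$, that is $G=\nabla_x\psi_h$ with the conventions of Section~\ref{Sec:Prelim} (in $d=1,\,2$ the zero average of $\rho_h$ is essential to make this identification unambiguous, while in $d\ge3$ it follows directly from convolution with $G_d$). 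Hence $\irdx{|\nabla_x\psi_h|^2}<\infty$, $h\in\mathcal H$, and $\nH h_n-h\nH\to0$. The only delicate point is this identification of the weak limit of the gradients with $\nabla_x\psi_h$; once it is handled dimension by dimension via Section~\ref{Sec:Prelim}, all other verifications are routine.
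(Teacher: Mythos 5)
Your argument and the paper's address complementary halves of the claim, and they are genuinely different in emphasis. The paper's proof is almost entirely devoted to showing that the nonlocal term makes sense and is non-negative: it justifies the identity $\irdx{\rho_{h_1}\,(-\Delta_x)^{-1}\rho_{h_2}}=\irdx{\nabla_x\psi_{h_1}\cdot\nabla_x\psi_{h_2}}$ dimension by dimension, using the Hardy--Littlewood--Sobolev inequality together with a Gagliardo--Nirenberg interpolation for $d\ge3$, the logarithmic HLS inequality for $d=2$, and the explicit representation~\eqref{Nonnegativity1d} for $d=1$; completeness itself is only asserted, in the remark following the lemma, via identification of $\mathcal H$ with the completion of smooth compactly supported functions of zero average. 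You invert the emphasis: you take the integration-by-parts identity for granted (deferring to the conventions of Section~\ref{Sec:Prelim}) and concentrate on a direct completeness argument, extracting an $\mathrm L^2(d\mu)$ limit of $(h_n)$ and an $\mathrm L^2$ limit $G$ of the gradients and identifying them by testing the Poisson equation distributionally. Your route is more self-contained on the point that the statement literally asks for and makes explicit what the paper's ``completion'' remark leaves implicit, whereas the paper settles the harder analytic point. Two caveats are worth keeping in mind. First, your final step needs more than $-\nabla_x\cdot G=\rho_h$: you must argue that $G$ is curl-free (true, as an $\mathrm L^2$ limit of gradients) and then invoke a Liouville-type rigidity plus the normalizations of Section~\ref{Sec:Prelim} to exclude a nonconstant harmonic discrepancy between $G$ and $\nabla_x\psi_h$ --- you flag this, but it is the crux, not a formality. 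Second, the HLS-type estimates in the paper are precisely what guarantee that $\psi_h$, defined by Green's-function convolution or by the $d=1$ representation, has $\nabla_x\psi_h$ in $\mathrm L^2$ so that the uniqueness argument even applies; they cannot really be bypassed by citing conventions. A fully rigorous proof of the lemma is best obtained by combining your completeness argument with the paper's justification of the scalar product.
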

\begin{proof} Up to an integration by parts, we can rewrite $\scalar{h_1}{h_2}$ as
\[
\scalar{h_1}{h_2}=\irdmu{h_1\,h_2}+\irdx{(-\Delta_x\psi_{h_1})\,\psi_{h_2}}=\irdmu{h_1\,h_2}+\irdx{\nabla_x\psi_{h_1}\cdot\nabla_x\psi_{h_2}}
\]
and observe that this determines a scalar product. This computation has to be justified. Let us distinguish three cases depending on the dimension $d$.

\medskip Let us assume first that $d\ge3$. We know that $\psi_\star=G_d\ast\rho_\star$ is nonnegative and deduce that $\rho_\star$ is bounded because
\[
0\le e^{-V-\psi_\star}\le e^{-V}\in\mathrm L^\infty(\R^d)\,.
\]
Hence, for any $p\in(1,2]$, we have
\[
\nrm{\rho_h}{\mathrm L^p(\R^d)}^p=\int_{\R^d}\left|\int_{\R^d}h\,f_\star\,dv\right|^p\,dx\le\nrm{\rho_\star}{\mathrm L^\infty(\R^d)}^{p-1}\,\irdmu{|h|^p}\,.
\]
According to~\cite{Lieb-83}, we know by the Hardy-Littlewood-Sobolev inequality that
\[
\int_{\R^d\times\R^d}\frac{|\rho_1(x)|\,|\rho_2(x)|}{|x-y|^{d-a}}\,dx\,dy\le\mathcal C_{\mathrm{HLS}}\nrm{\rho_1}{\mathrm L^p(\R^d)}\,\nrm{\rho_2}{\mathrm L^q(\R^d)}
\]
if $a\in(0,d)$ and $p$, $q\in(1,+\infty)$ are such that $1+\frac ad=\frac1p+\frac1q$. This justifies the fact that $\irdx{\rho_h\,(-\Delta_x)^{-1}\rho_h}$ is well defined if $h\in\mathrm L^1\cap\mathrm L^2\(\R^d\times\R^d,d\mu\)$. With $a=2$, $p<3/2$ if $d=3$, $p<2$ if $d=4$ and $p\le2$ if $d\ge5$, we deduce that $\psi_h\in\mathrm L^{q'}(\R^d)$ where $q'=q/(q-1)=d\,p/(d-2\,p)$. A simple H\"older estimate shows the Gagliardo-Nirenberg type estimate
\[
\nrm{\nabla_x\psi}{\mathrm L^2(\R^d)}^2\le\nrm{\Delta_x\psi}{\mathrm L^{p_1}(\R^d)}\nrm\psi{\mathrm L^{q_1}(\R^d)}
\]
and proves for an appropriate choice of $(p_1,q_1)\in(1,2)\times(2,+\infty)$ with $\frac1{p_1}+\frac1{q_1}=1$ that $\nabla_x\psi_h$ is bounded in $\mathrm L^2(\R^d)$.

\medskip The case $d=2$ is well known. The boundedness of $\nrm{\rho_h}{\mathrm L^p(\R^d)}$ for any $p\in(1,2]$ follows by the same argument as in the case $d\ge3$ and we learn that $|\rho_h|\,\log|\rho_h|$ is integrable by log-H\"older interpolation. The boundedness from below of $\int_{\R^2}{\rho_h\,(-\Delta_x)^{-1}\rho_h}$ is then a consequence of the logarithmic Hardy-Littlewood-Sobolev inequality, see~\cite{MR1143664,DL2019}. Using the fact that $\irdx{\rho_h}=0$, we also know from~\cite{MR2226917} that $\nabla_x\psi_h$ is bounded in $\mathrm L^2(\R^2)$.

\medskip When $d=1$, the nonnegativity of the scalar product is a consequence of~\eqref{Nonnegativity1d} and holds without additional condition by a simple density argument.\end{proof}

The condition $\irdmu h=0$ in the definition of $h$ is simply an orthogonality condition with the constant functions, with respect to the usual scalar product in $\mathrm L^2\(\R^d\times\R^d,d\mu\)$. By taking the completion of smooth compactly supported functions with zero average with respect to the norm defined by $h\mapsto\scalar hh$, we recover $\mathcal H$, which is therefore a Hilbert space. In the next sections, we shall denote by $\|\cdot\nH$ the norm on $\mathcal H$ associated with the scalar product so that
\[
\|h\nH^2=\scalar hh\quad\forall\,h\in\mathcal H\,.
\]
Whenever another norm is used, this will be explicitly specified.

\section{Proof of the main result}\label{Sec:Proof}

In this section, we prove Theorem~\ref{Thm:Main}. Our task is to check that the assumptions of Section~\ref{Sec:Abstract} hold in the functional setting of Section~\ref{Sec:FunctionalSetting}.

\subsection{Definitions and elementary properties}

On the space $\mathcal H$ defined by~\eqref{Hilbert} with scalar product given by~\eqref{Hilbert-scalar}, let us consider the transport and the collision operators respectively defined by
\be{TL}
\sfT h:=v\cdot\nabla_xh-\nabla_xW_\star\cdot\nabla_vh+v\cdot\nabla_x\psi_h\,,\quad\sfL h:=\Delta_vh-v\cdot\nabla_vh
\ee
where $W_\star=V+\phi_\star$. In the literature, $\sfL$ is known as the \emph{Ornstein-Uhlenbeck operator}.
\begin{lemma}\label{Lem:Operators} With the above notation, $\sfL$ and $\sfT$ are respectively symmetric and anti-sym\-metric operators on $(\mathcal{ H},\langle,\rangle)$.\end{lemma}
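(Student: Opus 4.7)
The symmetry of $\sfL$ is the easy part. Since $\sfL$ acts only in $v$ and $\nabla_v\mathcal M=-v\,\mathcal M$, one can write $\sfL h=\mathcal M^{-1}\nabla_v\cdot(\mathcal M\,\nabla_vh)$. A single integration by parts in $v$ (which is legitimate on the natural dense subspace of $\mathcal H$) gives
\[
\irdv{(\sfL h)\,\mathcal M}=0\,,
\]
hence $\rho_{\sfL h}=0$ and $\psi_{\sfL h}=0$. Consequently the Poisson term in~\eqref{Hilbert-scalar} drops out when testing against $\sfL$, and a second integration by parts in $v$ yields
\[
\scalar{\sfL h_1}{h_2}=-\irdmu{\nabla_vh_1\cdot\nabla_vh_2}=\scalar{h_1}{\sfL h_2}\,.
\]

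For $\sfT$ I would split $\sfT=\sfT_0+\sfT_1$ with $\sfT_0h=v\cdot\nabla_xh-\nabla_xW_\star\cdot\nabla_vh$ (the Vlasov part) and $\sfT_1h=v\cdot\nabla_x\psi_h$ (the Poisson part), and handle the $\mathrm L^2(d\mu)$ piece and the Poisson piece of $\scalar{\cdot}{\cdot}$ separately. For $\sfT_0$: since $d\mu=e^{-W_\star}\mathcal M\,dx\,dv$ and $\nabla_x(e^{-W_\star}\mathcal M)=-\nabla_xW_\star\,e^{-W_\star}\mathcal M$, while $\nabla_v(e^{-W_\star}\mathcal M)=-v\,e^{-W_\star}\mathcal M$, an integration by parts shows that the transport field $(v,-\nabla_xW_\star)$ on $\R^d\times\R^d$ is divergence-free with respect to the measure $d\mu$; hence $\sfT_0$ is antisymmetric on $\mathrm L^2(d\mu)$, and in particular on the $\mathrm L^2(d\mu)$-component of $\scalar{\cdot}{\cdot}$.

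The real point is $\sfT_1$ together with the non-local part of $\scalar{\cdot}{\cdot}$. The crucial algebraic identity to establish is the continuity-equation form
\be{KeyContinuity}
\rho_{\sfT h}=\nabla_x\cdot\mathsf J_h\,,\qquad\mathsf J_h(x):=\irdv{v\,h\,f_\star}\,,
\ee
obtained by computing $\rho_{\sfT h}=\irdv{\sfT h\,f_\star}$ term by term: the first term contributes $\nabla_x\cdot\mathsf J_h+\nabla_xW_\star\cdot\mathsf J_h$ (rewriting $v\cdot\nabla_xh\,f_\star=\nabla_x\cdot(v\,h\,f_\star)+h\,v\cdot\nabla_xW_\star\,f_\star$); the second term contributes $-\nabla_xW_\star\cdot\mathsf J_h$ (integrating by parts in $v$ using $\nabla_v f_\star=-v\,f_\star$); and the third term vanishes by $v$-parity of~$\mathcal M$. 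These combine exactly so that only $\nabla_x\cdot\mathsf J_h$ survives. From~\eqref{KeyContinuity} one deduces $-\Delta_x\psi_{\sfT h_1}=\nabla_x\cdot\mathsf J_{h_1}$, so
\[
\irdx{\nabla_x\psi_{\sfT h_1}\cdot\nabla_x\psi_{h_2}}=\irdx{\psi_{h_2}\,\nabla_x\cdot\mathsf J_{h_1}}=-\irdx{\nabla_x\psi_{h_2}\cdot\mathsf J_{h_1}}\,.
\]
On the other hand, the $\mathsf T_1$-contribution to the $\mathrm L^2(d\mu)$-piece is
\[
\irdmu{h_2\,v\cdot\nabla_x\psi_{h_1}}=\irdx{\nabla_x\psi_{h_1}\cdot\mathsf J_{h_2}}\,.
\]
Adding these and swapping the roles of $h_1,h_2$ produces the cancellation $\scalar{\sfT_1 h_1}{h_2}+\scalar{h_1}{\sfT_1 h_2}=0$, so that $\sfT_1$ is also antisymmetric for $\scalar{\cdot}{\cdot}$.

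The only real obstacle is justifying the integrations by parts: one must check that for $h_1,h_2\in\mathcal H$ the boundary terms in $v$ (exponential decay via $\mathcal M$) and in $x$ (using the growth of $W_\star$ from Lemma~\ref{Lem:Poisson-Boltzmann} and the $\mathrm L^2$ control of $\nabla_x\psi$ from Lemma~\ref{Lem:Hilbert}) vanish. This is handled in the standard way by first proving the identities on a dense subspace of smooth, compactly supported zero-average functions and then extending by continuity of $\sfL$ and $\sfT$ viewed as unbounded operators on $(\mathcal H,\scalar{\cdot}{\cdot})$. Everything else reduces to the algebraic identity~\eqref{KeyContinuity}, which is precisely the reason the non-local term $\irdx{\nabla_x\psi_{h_1}\cdot\nabla_x\psi_{h_2}}$ was added to the scalar product.
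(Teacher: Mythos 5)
Your proof is correct, and for the antisymmetry of $\sfT$ it is in fact more careful than the paper's own one-line argument, which closes the issue by asserting $\rho_{\sfT h}=0$ and $\psi_{\sfT h}=0$. That assertion does not hold: the three contributions to $\rho_{\sfT h}=\irdv{(\sfT h)\,f_\star}$ are respectively $\nabla_x\cdot j_h+\nabla_x W_\star\cdot j_h$, $-\nabla_x W_\star\cdot j_h$, and $0$ (only the $v\cdot\nabla_x\psi_h$ term has vanishing $v$-average), so what remains is exactly your continuity identity $\rho_{\sfT h}=\nabla_x\cdot j_h$, with $j_h=\irdv{v\,h\,f_\star}$. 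Consequently the Poisson part of $\scalar{\sfT h_1}{h_2}$ does \emph{not} drop out on its own; as you show, it contributes $-\irdx{j_{h_1}\cdot\nabla_x\psi_{h_2}}$, which is what cancels against the $\irdx{\nabla_x\psi_{h_1}\cdot j_{h_2}}$ coming from the $v\cdot\nabla_x\psi_{h_1}$ piece of the $\mathrm L^2(d\mu)$-integral once symmetrized in $(h_1,h_2)$. This cancellation is the real content of the lemma and is precisely why the nonlocal term was added to the scalar product; your proof supplies the step the paper's sketch elides.

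One small wording issue: you summarize the cancellation as ``$\sfT_1$ is also antisymmetric for $\scalar{\cdot}{\cdot}$''. That is not literally what you proved. Since $\rho_{\sfT_1 h}=0$ one has $\psi_{\sfT_1 h}=0$, so $\scalar{\sfT_1 h_1}{h_2}+\scalar{h_1}{\sfT_1 h_2}=\irdx{\nabla_x\psi_{h_1}\cdot j_{h_2}}+\irdx{\nabla_x\psi_{h_2}\cdot j_{h_1}}$, which does not vanish in general; likewise the $\sfT_0$ piece alone picks up the two $-\irdx{j\cdot\nabla_x\psi}$ terms from the Poisson part. Neither $\sfT_0$ nor $\sfT_1$ is individually antisymmetric on $\big(\mathcal H,\scalar{\cdot}{\cdot}\big)$; it is the cross-cancellation between the nonlocal contribution of $\sfT_0$ (via the continuity equation) and the $\mathrm L^2(d\mu)$ contribution of $\sfT_1$ that makes the full $\sfT$ antisymmetric, exactly as your computation shows. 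Rephrasing that last sentence accordingly would make the argument airtight. The treatment of $\sfL$, and the remark about justifying integrations by parts on a dense subspace, match the paper.
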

The operator $\sfL$ is defined on continuous functions with compact support which are of class $C^2$ with respect to $v$ and it is semi-bounded. It can be extended using Friederichs extension as a self-adjoint operator on $\mathcal H$. Concerning $\sfT$, we shall define it on the space of continuous functions with compact support of class $C^1$ with respect to $x$ and $v$ and omit details concerning domain issues and extensions as we need only properties that apply to solutions of the evolution problem~\eqref{Syst:VPFPlin}.
\begin{proof} If $h_1$ and $h_2$ are two functions in $\mathrm W^{2,2}(\R^d,\mathcal M\,dv)$, then $\sfL$ is such that
\[
\irh{(\sfL h_1)\,h_2\,\mathcal M}=-\irh{\nabla_vh_1\cdot\nabla_vh_2\,\mathcal M}
\]
and as a special case corresponding to $h_1=h$, $h_2=1$, we find that $\rho_{\sfL h}=\irh{(\sfL h)\,f_\star}=0$ and also $\psi_{\sfL h}=0$ for any $h\in\mathcal H$. As a consequence, we have that
\[
\scalar{(\sfL h_1)}{h_2}=-\irdmu{\nabla_vh_1\cdot\nabla_vh_2}=\scalar{h_1}{(\sfL h_2)}\,.
\]
Concerning the transport operator, we know that $\sfT f_\star=0$. Hence an integration by parts shows that
\[
\scalar{(\sfT h_1)}{h_2}=\irdmu{\(v\cdot\nabla_xh_1-\nabla_xW_\star\cdot\nabla_vh_1\)h_2}=-\,\scalar{h_1}{(\sfT h_2)}
\]
for function $h_1$ and $h_2$ in $\mathcal M$ which are smooth enough, because $\rho_{\sfT h}=\irh{(\sfT h)\,f_\star}=\nabla_x\psi_h\cdot\irh{v\,f_\star}=0$ and $\psi_{\sfT h}=0$.\end{proof}

\subsection{Microscopic coercivity}\label{Sec:Micro}

By the Gaussian Poincar\'e inequality, we know that
\[
\irdv{|\nabla_vg|^2\,\mathcal M}\ge\irdv{\left|g-\sfPi g\right|^2\,\mathcal M}\quad\forall\,g\in\mathrm H^1\(\R^d,\,\mathcal M\,dv\)\,,
\]
where $\sfPi g=\irdv{g\,\mathcal M}$ denotes the average of $g$ with respect to the Gaussian probability measure $\mathcal M\,dv$. By extension, we shall consider $\sfPi$ as an operator on $\mathcal H$ and observe that
\be{uh}
\sfPi h=u_h:=\frac{\rho_h}{\rho_\star}=\frac{\irh{h\,f_\star}}{\irh{f_\star}}=\irdv{h\,\mathcal M}\quad\forall\,h\in\mathcal H\,.
\ee
Let us notice first that $\sfPi$ is an orthogonal projector.
\begin{lemma}\label{Lem:Projection}$\sfPi$ is a self-adjoint operator and $\sfPi\circ\sfPi=\sfPi$.\end{lemma}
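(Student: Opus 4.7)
The plan is to verify both properties by direct computation, exploiting the fact that $\sfPi h$ depends only on $x$ and that the Poisson density $\rho_{\sfPi h}$ coincides with $\rho_h$.

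For idempotence, I would first observe that, by definition~\eqref{uh}, $\sfPi h = u_h(x) = \rho_h(x)/\rho_\star(x)$ is a function of $x$ alone. Since $\mathcal M$ is a probability measure on $\R^d_v$, we immediately get
\[
\sfPi(\sfPi h) = \irdv{u_h(x)\,\mathcal M} = u_h(x)\,\irdv{\mathcal M} = u_h(x) = \sfPi h\,,
\]
which gives $\sfPi\circ\sfPi=\sfPi$.

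For self-adjointness, the key identity I would establish first is $\rho_{\sfPi h}=\rho_h$: indeed,
\[
\rho_{\sfPi h}=\irdv{(\sfPi h)\,f_\star}=u_h(x)\irdv{f_\star}=u_h(x)\,\rho_\star(x)=\rho_h(x)\,.
\]
Consequently the nonlocal part of $\scalar{\sfPi h_1}{h_2}$ equals $\irdx{\rho_{h_1}\,(-\Delta_x)^{-1}\rho_{h_2}}$, which is already symmetric in $h_1,h_2$ by symmetry of $(-\Delta_x)^{-1}$ (cf.\ the discussion in Section~\ref{Sec:Prelim}). For the local part, I would compute
\[
\irdmu{(\sfPi h_1)\,h_2}=\irdx{u_{h_1}(x)\,\rho_{h_2}(x)}=\irdx{\frac{\rho_{h_1}\,\rho_{h_2}}{\rho_\star}}\,,
\]
which is manifestly symmetric in $h_1$ and $h_2$. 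Combining the two contributions yields $\scalar{\sfPi h_1}{h_2}=\scalar{h_1}{\sfPi h_2}$, so $\sfPi$ is self-adjoint on $(\mathcal H,\scalar\cdot\cdot)$.

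There is no real obstacle here: both claims reduce to the observation that $\sfPi$ picks out the $v$-average, that $u_h$ depends only on $x$, and that the density of $\sfPi h$ equals that of $h$, so the extra nonlocal term in~\eqref{Hilbert-scalar} does not spoil the standard $\mathrm L^2(d\mu)$ symmetry. The only mild care needed is to ensure that the integrals appearing above make sense for $h_1,h_2\in\mathcal H$, which is guaranteed by Lemma~\ref{Lem:Hilbert}.
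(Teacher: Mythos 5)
Your proof is correct and follows essentially the same route as the paper: idempotence via averaging a $v$-independent function against the probability measure $\mathcal M\,dv$, and self-adjointness by checking that both the local part (rewritten as $\irdx{u_{h_1}u_{h_2}\rho_\star}=\irdx{\rho_{h_1}\rho_{h_2}/\rho_\star}$) and the nonlocal part (using $\rho_{\sfPi h}=\rho_h$) are symmetric in $h_1,h_2$. The only difference is cosmetic — you spell out a couple of elementary steps the paper leaves implicit.
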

\begin{proof} It is elementary to check that
\[
(\sfPi\circ\sfPi)\,h=\sfPi u_h=u_h\,,\quad\irdmu{(\sfPi h_1)\,h_2}=\ird{u_{h_1}\,u_{h_2}\,\rho_\star}
\]
and
\[
\irdx{\rho_{\sfPi h_1}\,(-\Delta_x)^{-1}\rho_{h_2}}=\irdx{\rho_{h_1}\,(-\Delta_x)^{-1}\rho_{h_2}}
\]
because $\rho_{h_1}=\rho_\star\,u_{h_1}=\rho_\star\,u_{\sfPi h_1}=\rho_{\sfPi h_1}$.\end{proof}
\begin{lemma}\label{Lem:MicroscopicCoercivity} Microscopic coercivity~\eqref{H1} holds with $\lambda_m=1$.\end{lemma}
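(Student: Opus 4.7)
The plan is to reduce microscopic coercivity to the standard Gaussian Poincaré inequality applied fiberwise in $v$, after observing that the non-local Poisson contribution to $\|(\Id-\sfPi)h\|^2$ automatically vanishes.

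First I would compute $-\scalar{\sfL h}{h}$ using the integration-by-parts identity already established in the proof of Lemma~\ref{Lem:Operators}: since $\rho_{\sfL h}=0$ and hence $\psi_{\sfL h}=0$, the Poisson piece of the scalar product drops out and one is left with
\[
-\scalar{\sfL h}{h}=\irdmu{|\nabla_vh|^2}\,.
\]

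Next I would evaluate $\|(\Id-\sfPi)h\|^2$. The key observation is that $\sfPi h=u_h$ depends only on $x$, so that $\rho_{\sfPi h}=u_h\,\rho_\star=\rho_h$ by the definition~\eqref{uh}, and therefore $\rho_{(\Id-\sfPi)h}=0$. Consequently $\psi_{(\Id-\sfPi)h}=0$ and the non-local term in~\eqref{Hilbert-scalar} vanishes:
\[
\|(\Id-\sfPi)h\nH^2=\irdmu{(h-u_h)^2}=\irdx{\rho_\star(x)\irdvi{(h-u_h)^2\,\mathcal M}}\,.
\]

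Finally, since $u_h(x)=\irdv{h\,\mathcal M}$ is precisely the Gaussian average of $h(x,\cdot)$, the standard Gaussian Poincaré inequality stated at the beginning of Section~\ref{Sec:Micro} yields, for a.e.~$x\in\R^d$,
\[
\irdvi{(h(x,v)-u_h(x))^2\,\mathcal M}\le\irdvi{|\nabla_vh(x,v)|^2\,\mathcal M}\,.
\]
Multiplying by $\rho_\star(x)$ and integrating over $\R^d$ gives
\[
\|(\Id-\sfPi)h\nH^2\le\irdmu{|\nabla_vh|^2}=-\scalar{\sfL h}h\,,
\]
which is exactly~\eqref{H1} with $\lambda_m=1$. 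There is no genuine obstacle here: the only non-trivial point is the bookkeeping that shows the Poisson term disappears on $(\Id-\sfPi)\mathcal H$, which is what makes the adapted scalar product~\eqref{Hilbert-scalar} compatible with the microscopic coercivity estimate with the same constant as in the purely Fokker-Planck case.
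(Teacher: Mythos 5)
Your proof is correct and follows the same route as the paper: both argue that $\rho_{h-\sfPi h}=0$ kills the Poisson term, reducing $\|(\Id-\sfPi)h\|^2$ to the weighted $\mathrm L^2$ norm, and then invoke the Gaussian Poincar\'e inequality fiberwise in $v$. You merely spell out the intermediate steps (the identity $-\scalar{\sfL h}{h}=\irdmu{|\nabla_vh|^2}$ and the fiberwise integration) that the paper leaves implicit.
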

\begin{proof} We already know that $-\,\scalar{(\sfL h)}h=\irdmu{|\nabla_vh|^2}$ and $\rho_{h-\sfPi h}=\rho_h-\rho_{\sfPi h}=0$ so that
\[
\left\|h-\sfPi h\right\|^2=\nrm{h-\sfPi h}{\mathrm L^2(\R^d\times\R^d,d\mu)}^2=\irdmu{\left|h-\sfPi h\right|^2}\,.
\]
The conclusion is then a consequence of the Gaussian Poincar\'e inequality.
\end{proof}

\subsection{Macroscopic coercivity}\label{Sec:Macro}

\begin{lemma}\label{Lem:MacroscopicCoercivity} Assume that $d\ge1$ and consider $V$ such that~\eqref{Hyp-V1}, \eqref{Hyp-V2}, \eqref{Hyp-V3a} and \eqref{Hyp-V4} hold. With the notations of Lemma~\ref{Lem:Poincare}, macroscopic coercivity~\eqref{H2} holds with $\lambda_M=\mathcal C_\star$.\end{lemma}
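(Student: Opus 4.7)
The plan is to reduce the macroscopic coercivity inequality to the Poincar\'e inequality of Lemma~\ref{Lem:Poincare}, exploiting the non-local term in the scalar product~\eqref{Hilbert-scalar} through a simple but crucial integration by parts against the Poisson equation.

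First I would unpack $\sfT\sfPi h$. By~\eqref{uh}, $\sfPi h=u_h(x)$ depends only on $x$, and $\rho_{\sfPi h}=\rho_\star\,u_h=\rho_h$, so $\psi_{\sfPi h}=\psi_h$. Since $\nabla_v(\sfPi h)=0$, the definition of $\sfT$ in~\eqref{TL} gives
\[
\sfT\sfPi h=v\cdot\nabla_x(u_h+\psi_h)\,.
\]
Next, as observed in the proof of Lemma~\ref{Lem:Operators}, $\rho_{\sfT g}=0$ for every admissible $g$, hence $\psi_{\sfT\sfPi h}=0$ and the non-local part of the norm drops out of $\nH\sfT\sfPi h\nH^2$. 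Using $\irdvi{(v\cdot\mathsf a)^2\,\mathcal M}=|\mathsf a|^2$ for every $\mathsf a\in\R^d$, I obtain
\[
\nH\sfT\sfPi h\nH^2=\irdx{\big|\nabla_x(u_h+\psi_h)\big|^2\,\rho_\star}\,.
\]

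The second step is to apply the Poincar\'e inequality~\eqref{Ineq:Poincare2} to $w:=u_h+\psi_h$. Since $\nabla_xw$ is unchanged under additive constants in $\psi_h$, I may normalize $\psi_h$ so that $\irdx{w\,\rho_\star}=0$; Lemma~\ref{Lem:Poincare} then yields
\[
\irdx{|\nabla_xw|^2\,\rho_\star}\ge\mathcal C_\star\irdx{w^2\,\rho_\star}=\mathcal C_\star\(\irdx{u_h^2\,\rho_\star}+2\irdx{u_h\,\psi_h\,\rho_\star}+\irdx{\psi_h^2\,\rho_\star}\)\,.
\]
The key identity is then
\[
\irdx{u_h\,\psi_h\,\rho_\star}=\irdx{\rho_h\,\psi_h}=\irdx{(-\Delta_x\psi_h)\,\psi_h}=\irdx{|\nabla_x\psi_h|^2}\,,
\]
justified as in Section~\ref{Sec:Prelim} (with the conventions explained there for $d=1,2$, using $\irdx{\rho_h}=0$). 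Dropping the nonnegative term $\irdx{\psi_h^2\,\rho_\star}$ and recalling $\nH\sfPi h\nH^2=\irdx{u_h^2\,\rho_\star}+\irdx{|\nabla_x\psi_h|^2}$, I conclude
\[
\nH\sfT\sfPi h\nH^2\ge\mathcal C_\star\(\irdx{u_h^2\,\rho_\star}+2\irdx{|\nabla_x\psi_h|^2}\)\ge\mathcal C_\star\,\nH\sfPi h\nH^2\,,
\]
which is~\eqref{H2} with $\lambda_M=\mathcal C_\star$.

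The only delicate point is the identity $\irdx{u_h\,\psi_h\,\rho_\star}=\irdx{|\nabla_x\psi_h|^2}$, which is precisely the reason why the non-local term $\irdx{|\nabla_x\psi_h|^2}$ was built into the norm~\eqref{Def:norm}: it turns the cross-term produced by expanding $|\nabla_x(u_h+\psi_h)|^2$ against $\rho_\star$ into exactly the contribution needed to close the estimate. This identity is available in all dimensions $d\ge1$ under the zero-average assumption thanks to Lemma~\ref{Lem:Hilbert} and the conventions of Section~\ref{Sec:Prelim}, so no further hypothesis beyond those already made for Lemma~\ref{Lem:Poincare} is needed.
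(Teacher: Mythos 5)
Your proof is correct and follows essentially the same route as the paper: write $\sfT\sfPi h=v\cdot\nabla_x(u_h+\psi_h)$, note the non-local part of the norm vanishes since $\rho_{\sfT\sfPi h}=0$, apply the weighted Poincar\'e inequality of Lemma~\ref{Lem:Poincare} to $u_h+\psi_h$, and use the identity $\irdx{u_h\,\psi_h\,\rho_\star}=\irdx{|\nabla_x\psi_h|^2}\ge0$ to absorb the cross term and drop $\irdx{\psi_h^2\rho_\star}$. The only cosmetic difference is how you handle the fact that $u_h+\psi_h$ need not have zero $\rho_\star$-mean: you shift $\psi_h$ by a constant (legitimate, since $\irdx{\rho_h}=0$ makes both $\irdx{\rho_h\psi_h}$ and $\nabla_x\psi_h$ invariant under the shift), whereas the paper subtracts the mean explicitly and then uses Cauchy--Schwarz to bound $\frac1M(\irdx{\psi_h\rho_\star})^2\le\irdx{\psi_h^2\rho_\star}$; both lead to the identical lower bound $\mathcal C_\star(\irdx{u_h^2\rho_\star}+2\irdx{|\nabla_x\psi_h|^2})$.
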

\begin{proof} Using $\sfT\sfPi h=v\cdot\(\nabla_xu_h+\nabla_x\psi_h\)$ with $u_h$ as in~\eqref{uh}, $\irh{(v\cdot\mathsf e)^2\,\mathcal M}=1$ for any given $\mathsf e\in\mathbb S^{d-1}$ and~\eqref{Ineq:Poincare2}, we find that
\[
\left\|\sfT\sfPi h\right\nH^2=\irdx{|\nabla_xu_h+\nabla_x\psi_h|^2\,\rho_\star}\ge\mathcal C_\star\left[\irdx{|u_h+\psi_h|^2\,\rho_\star}-\frac1M\(\irdx{\psi_h\,\rho_\star}\)^2\right]
\]
because $\irdx{u_h\,\rho_\star}=\irdx{\rho_h}=0$. We know from Lemma~\ref{Lem:Hilbert} that $\irdx{u_h\,\psi_h\,\rho_\star}=\irdx{\rho_h\,\psi_h}\ge0$ and by the Cauchy-Schwarz inequality, we get that
\[
\(\irdx{\psi_h\,\rho_\star}\)^2\le M\,\irdx{|\psi_h|^2\,\rho_\star}\,.
\]
Altogether, we collect these estimates into
\[
\irdx{|\nabla_xu_h+\nabla_x\psi_h|^2\,\rho_\star}\ge\mathcal C_\star\left[\irdx{|u_h|^2\,\rho_\star}+\irdx{\rho_h\,\psi_h}\right]=\mathcal C_\star\,M\,\|u_h\nH^2\,,
\]
which concludes the proof.\end{proof}

\subsection{Parabolic macroscopic dynamics}\label{Sec:ParabolicMacroDyn}

\begin{lemma}\label{Lem:ParabolicMacroDyn} The transport operator $\sfT$ satisfies the parabolic macroscopic dynamics~\eqref{H3}.\end{lemma}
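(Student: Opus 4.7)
The plan is a direct computation using the three structural facts built into the setup: (i) $\sfPi$ projects onto functions depending only on $x$; (ii) the spatial density $\rho_h$, and hence the Poisson potential $\psi_h$, is preserved by $\sfPi$; (iii) the centered Gaussian $\mathcal M$ has vanishing first moment.

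First, I would unpack the projection. By~\eqref{uh}, $\sfPi h = u_h(x) = \rho_h(x)/\rho_\star(x)$ is a function of $x$ alone, so $\nabla_v \sfPi h \equiv 0$. Moreover
\[
\rho_{\sfPi h}(x) = \irdv{u_h(x)\,f_\star(x,v)} = u_h(x)\,\rho_\star(x)\irdv{\mathcal M} = \rho_h(x),
\]
so $\psi_{\sfPi h} = (-\Delta_x)^{-1}\rho_{\sfPi h} = \psi_h$. Plugging into the definition~\eqref{TL} of $\sfT$, the term $\nabla_x W_\star\cdot\nabla_v\sfPi h$ vanishes and we obtain
\[
\sfT \sfPi h = v\cdot\nabla_x u_h + v\cdot\nabla_x\psi_h = v\cdot\nabla_x\bigl(u_h + \psi_h\bigr).
\]

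Next I would apply $\sfPi$ once more. Since $u_h+\psi_h$ depends only on $x$, it factors out of the velocity integration, leaving
\[
\sfPi \sfT \sfPi h = \irdv{\bigl(v\cdot\nabla_x(u_h+\psi_h)\bigr)\mathcal M} = \nabla_x(u_h+\psi_h)\cdot\irdv{v\,\mathcal M} = 0,
\]
because $\irdv{v\,\mathcal M}=0$. This establishes~\eqref{H3}.

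There is no real obstacle here: the only thing worth emphasizing, for later use in Section~\ref{Sec:Macro}, is the identity $\psi_{\sfPi h}=\psi_h$, which is what makes the Poisson coupling compatible with the splitting $h = \sfPi h + (\Id-\sfPi)h$. The computation is otherwise a one-line consequence of the zero-first-moment of $\mathcal M$.
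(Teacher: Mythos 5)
Your proof is correct and follows essentially the same path as the paper: establish $\sfT\sfPi h = v\cdot\nabla_x(u_h+\psi_h)$ and conclude by the vanishing first moment of $\mathcal M$. The only difference is that you spell out the intermediate identity $\psi_{\sfPi h}=\psi_h$ (which the paper leaves implicit, as it was already noted in the proof of Lemma~\ref{Lem:Projection}); this is a harmless and arguably clarifying addition.
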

\begin{proof} Since $\sfT\sfPi h=v\cdot\(\nabla_xu_h+\nabla_x\psi_h\)$, we obtain that
\[
\sfPi\sfT\sfPi h\,\rho_\star=\(\nabla_xu_h+\nabla_x\psi_h\)\cdot\irdv{v\,f_\star}=0\,.
\]\end{proof}

\subsection{Bounded auxiliary operators}\label{Sec:BoundedAuxiliaryOperators}

The point is to prove that~\eqref{H4} holds, \emph{i.e.}, that for any $F\in\mathcal H$, $\|\sfA\sfT(\Id-\sfPi)F\nH$ and $\|\sfA\sfL F\nH$ are bounded up to a constant by $\|(\Id-\sfPi)F\nH$. This is the purpose of Lemma~\ref{Lem:BoundedAuxiliaryOperators} and Lemma~\ref{T(1moinsPi)}. The two quantities, $\|\sfA\sfT(\Id-\sfPi)F\nH$ and $\|\sfA\sfL F\nH$, are needed to control the bad terms in the expression of $\mathsf D_\delta$, in the abstract formulation of Proposition~\ref{DMS}, namely $\scalar{\sfT\sfA F}F$, $\scalar{\sfA\sfT(\Id-\sfPi)F}F$ and $\scalar{\sfA\sfL F}F$ (which have no definite sign), by the two good terms, $-\,\scalar{\sfL F}F$ and $\scalar{\sfA\sfT\sfPi F}F$ (which are both positive).
\begin{lemma}\label{Lem:BoundedAuxiliaryOperators} The operators $\sfT\sfA$ and $\sfA\sfL$ satisfy: for all $h\in\mathrm L^2\big(\R^d\times\R^d,d\mu\big)$,
\[
\|\sfA\sfL\,h\nH\le\frac12\,\|(\Id-\sfPi)h\nH\,.
\]\end{lemma}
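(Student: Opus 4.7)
My plan is to reduce $\sfA\sfL$ to $-\sfA$ via an eigenfunction identity for the Ornstein--Uhlenbeck operator, and then apply the universal bound $\|\sfA F\nH\le\tfrac12\,\|(\Id-\sfPi)F\nH$ already established in~\eqref{Eqn:AF-TAF} during the proof of Proposition~\ref{DMS}.

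First, I would observe that for every $g\in\mathcal H$, the function $\sfT\sfPi g$ is an eigenfunction of $\sfL$ with eigenvalue $-1$. Indeed, as computed in the proof of Lemma~\ref{Lem:MacroscopicCoercivity}, $\sfT\sfPi g=v\cdot(\nabla_xu_g+\nabla_x\psi_g)$ is a linear function of $v$ with $x$-dependent coefficients, and the elementary identities $\Delta_v v_i=0$ and $v\cdot\nabla_v v_i=v_i$ give $\sfL\sfT\sfPi g=-\sfT\sfPi g$.

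Second, I would transfer this identity to the adjoint $(\sfT\sfPi)^*$ computed with respect to the scalar product~\eqref{Hilbert-scalar}. The key observation is that $\rho_{\sfT\sfPi g}=0$, as already used in Lemmas~\ref{Lem:Operators} and~\ref{Lem:ParabolicMacroDyn}, so that the nonlocal Poisson contribution in $\scalar\cdot\cdot$ drops out whenever $\sfT\sfPi g$ appears on one side of the pairing. Combining this with the self-adjointness of $\sfL$ in $L^2(d\mu)$ (Lemma~\ref{Lem:Operators}) and the previous eigenfunction identity, I would compute, for all $h,g\in\mathcal H$,
\[
\scalar{(\sfT\sfPi)^*\sfL h}g=\scalar{\sfL h}{\sfT\sfPi g}=\scalar{h}{\sfL\sfT\sfPi g}=-\scalar{h}{\sfT\sfPi g}=-\scalar{(\sfT\sfPi)^*h}g,
\]
whence $(\sfT\sfPi)^*\sfL=-(\sfT\sfPi)^*$ and, after applying $\bigl(\Id+(\sfT\sfPi)^*\sfT\sfPi\bigr)^{-1}$, the operator identity $\sfA\sfL=-\sfA$.

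To conclude, I would invoke~\eqref{Eqn:AF-TAF}, applied with $F=h$, to get $\|\sfA h\nH\le\tfrac12\,\|(\Id-\sfPi)h\nH$, and deduce $\|\sfA\sfL h\nH=\|\sfA h\nH\le\tfrac12\,\|(\Id-\sfPi)h\nH$. I do not anticipate any serious obstacle: the only delicate point is to verify that the nonlocal Poisson correction in $\scalar\cdot\cdot$ does not spoil the adjoint computation, and this is precisely what $\rho_{\sfT\sfPi g}=0$ secures.
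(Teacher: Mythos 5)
Your proposal is correct and follows essentially the same strategy as the paper: both reduce the lemma to the operator identity $\sfA\sfL=-\sfA$ and then invoke the universal bound $\|\sfA F\nH\le\tfrac12\,\|(\Id-\sfPi)F\nH$ from~\eqref{Eqn:AF-TAF}. The paper establishes $(\sfT\sfPi)^*\sfL=-(\sfT\sfPi)^*$ by directly computing the flux reversal $j_{\sfL h}=-j_h$ in the explicit expression $(\sfT\sfPi)^*h=-\sfPi\sfT h$, whereas you dualize the eigenfunction identity $\sfL\,\sfT\sfPi=-\,\sfT\sfPi$ (using the self-adjointness of $\sfL$ and the fact that $\rho_{\sfT\sfPi g}=0$ makes the Poisson term inert), which is the same observation — the flux is the first Hermite coefficient in $v$, sitting in the eigenvalue $-1$ eigenspace of the Ornstein--Uhlenbeck operator — phrased spectrally rather than computationally.
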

\begin{proof} If we denote the flux by $j_h:=\irdv{v\,h\,f_\star}$, we remark that $j_{\sfL h}=-j_h$ and
\[
\sfPi\sfT h=\nabla_x\cdot j_h-\(\nabla_xV+\nabla_x\phi_\star\)\cdot j_h\,.
\]
Since $\sfA h=g$ means $g+(\sfT\sfPi)^\ast(\sfT\sfPi)g=(\sfT\sfPi)^*h=-\,\sfPi\sfT h$, this implies that
\[
\sfA\sfL h=-\,\sfA h\,.
\]
The same computation as for~\eqref{Eqn:AF-TAF} shows that $\|\sfA\sfL h\nH^2=\|\sfA h\nH^2=\|g\nH^2\le\frac14\|(\Id-\sfPi)h\nH^2$, which completes the proof.\end{proof}

\begin{lemma}\label{T(1moinsPi)} Assume that $d\ge1$ and consider $V$ such that~\eqref{Hyp-V1}, \eqref{Hyp-V2}, \eqref{Hyp-V3b}, \eqref{Hyp-V4}, \eqref{Hyp-V5}, \eqref{Hyp-V6}, \eqref{Hyp-V7} and \eqref{Hyp-V8} hold. There exists a constant $\mathcal C>0$ such that
\[
\|\sfA\sfT(\Id-\sfPi)\,h\nH\le\mathcal C\,\|(\Id-\sfPi)\,h\nH\quad\forall\,h\in\mathcal H\,.
\]
\end{lemma}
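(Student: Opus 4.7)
The plan is to apply the formula $\sfA=(\Id+(\sfT\sfPi)^*\sfT\sfPi)^{-1}(\sfT\sfPi)^*$ and exploit the Hilbert-space structure of $\mathcal H$. Set $h_\perp:=(\Id-\sfPi)h$ and $g:=\sfA\sfT h_\perp\in\sfPi\mathcal H$; then $g$ depends only on $x$, and writing $u_g:=g$ and $w_g:=u_g+\psi_g$ with $-\Delta_x\psi_g=\rho_\star u_g$, one has $\sfT\sfPi g=v\cdot\nabla_x w_g$. By construction $g$ satisfies the resolvent equation $(\Id+(\sfT\sfPi)^*\sfT\sfPi)g=(\sfT\sfPi)^*\sfT h_\perp$. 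Pairing it with $g$ under $\scalar\cdot\cdot$, using the antisymmetry of $\sfT$ (Lemma~\ref{Lem:Operators}) and the orthogonality $\sfPi\mathcal H\perp(\Id-\sfPi)\mathcal H$, I obtain the energy identity
\[
\|g\nH^2+\|\sfT\sfPi g\nH^2=\scalar{\sfT h_\perp}{\sfT\sfPi g}.
\]

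I would then integrate the right-hand side by parts to transfer all derivatives from $\sfT h_\perp$ onto $w_g$. Since $\rho_{h_\perp}=0$, one has $\sfT h_\perp=v\cdot\nabla_x h_\perp-\nabla_x W_\star\cdot\nabla_v h_\perp$. Integrating the first piece by parts in $x$ (using $\nabla_x f_\star=-\nabla_x W_\star\,f_\star$) and the second in $v$ (using $\nabla_v f_\star=-v\,f_\star$), the two contributions proportional to $v\otimes v:\nabla_x W_\star\otimes\nabla_x w_g$ cancel exactly, and the residual first-order term $\int h_\perp(\nabla_x W_\star\cdot\nabla_x w_g)\,f_\star\,dx\,dv$ vanishes because its integrand factors over $v$ and $\sfPi h_\perp=0$. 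What survives is
\[
\scalar{\sfT h_\perp}{\sfT\sfPi g}=-\int_{\R^d}K_{h_\perp}(x):\mathrm{Hess}(w_g)(x)\,\rho_\star\,dx,\qquad K_{h_\perp}(x):=\int_{\R^d}(v\otimes v-\Id)\,h_\perp(x,v)\,\mathcal M\,dv,
\]
the subtraction of $\Id$ being permitted because $\int h_\perp\,\mathcal M\,dv=0$. A Cauchy-Schwarz in $v$ together with Gaussian moments gives $\|K_{h_\perp}\|_{L^2(\rho_\star dx)}\le C_d\,\|h_\perp\nH$ (the Poisson contribution to $\|h_\perp\nH$ is zero since $\rho_{h_\perp}=0$), so a further Cauchy-Schwarz in $L^2(\rho_\star dx)$ reduces the whole problem to
\[
\|g\nH^2+\|\sfT\sfPi g\nH^2\le C_d\,\|h_\perp\nH\,\|\mathrm{Hess}(w_g)\|_{L^2(\rho_\star dx)}.
\]

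The crux is then to bound $\|\mathrm{Hess}(w_g)\|_{L^2(\rho_\star dx)}^2$ by a constant multiple of $\|g\nH^2+\|\sfT\sfPi g\nH^2$ up to a term of order $\|h_\perp\nH^2$, so that a Young inequality closes the estimate. The Bochner-Lichnerowicz-Weitzenb\"ock identity of Lemma~\ref{Lem:BLW} reduces this to controlling the two quantities $\int\rho_\star\,|(\sfT\sfPi)^*\sfT\sfPi g|^2\,dx$ and $\int(\nabla_x W_\star\cdot\nabla_x w_g)^2\rho_\star\,dx$. The first equals $\|(\sfT\sfPi)^*\sfT\sfPi g\|_{L^2(\rho_\star dx)}^2$ and, using the direct identity $(\sfT\sfPi)^*\sfT\sfPi g=-\rho_\star^{-1}\nabla_x\cdot(\rho_\star\nabla_x w_g)=\Delta_x w_g-\nabla_x W_\star\cdot\nabla_x w_g$ together with the resolvent equation $(\sfT\sfPi)^*\sfT\sfPi g=(\sfT\sfPi)^*\sfT h_\perp-g$, reduces to controlling $\|g\nH$ and $\|(\sfT\sfPi)^*\sfT h_\perp\nH$; the latter is re-estimated by iterating the same IBP identity with the test function $\xi=(\sfT\sfPi)^*\sfT h_\perp$. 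The second quantity is the genuine obstacle: handling it requires the full strength of (V3b) and (V5)--(V8), which provide the pointwise estimates of Lemmas~\ref{Lem:PoincareLambda}, \ref{Lem:PoincareLambda2} and~\ref{Lem:PoincareLambda3} on $|\nabla_xW_\star|^2\rho_\star$, on $|\nabla_x(|\nabla_xW_\star|^2)|$ and on the Hessian of $\rho_\star$. Together with an integration by parts against $\nabla_x\rho_\star=-\rho_\star\nabla_xW_\star$, these allow one to dominate $\int(\nabla_xW_\star\cdot\nabla_xw_g)^2\rho_\star\,dx$ by a small multiple of $\int|\mathrm{Hess}(w_g)|^2\rho_\star\,dx$ (to be absorbed on the left) plus $\|\sfT\sfPi g\nH^2$ and lower-order quantities already controlled.

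Combining these ingredients and absorbing the Hessian via Young's inequality delivers $\|g\nH^2+\|\sfT\sfPi g\nH^2\le\mathcal C^2\,\|h_\perp\nH^2$, which is the claim. The main obstacle throughout is the Hessian bound described in the previous paragraph: it is precisely here that the long list of technical hypotheses (V3b)--(V8) on the external potential~$V$ cannot be dispensed with, since these are what make the pointwise estimates of Section~\ref{Sec:Pointwise} available and allow the weighted gradient term to be absorbed.
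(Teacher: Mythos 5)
Your plan diverges from the paper's proof in a way that creates a genuine gap. The paper does not estimate $\sfA\sfT(\Id-\sfPi)h$ directly; it passes to the adjoint and estimates $\big(\sfA\sfT(\Id-\sfPi)\big)^*h$ (Step~1 of their proof), obtaining the operator bound by duality since $\sfA\sfT(\Id-\sfPi)=\sfPi\,\sfA\sfT(\Id-\sfPi)(\Id-\sfPi)$. This is not cosmetic. In the adjoint formulation one sets $g=(\Id+(\sfT\sfPi)^*\sfT\sfPi)^{-1}h$, so that $u_g-\frac1{\rho_\star}\nabla_x\cdot(\rho_\star\nabla_xw_g)=u_h$, and the \emph{source} $u_h$ lies in $\mathrm L^2(\rho_\star dx)$ and is controlled by $\|\sfPi h\nH$; Steps~2--4 then bound $\mathrm{Hess}(w_g)$ in $\mathrm L^2(\rho_\star dx)$ by $\|\sfPi h\nH$ via Lemma~\ref{Lem:BLW} and the weighted Poincar\'e inequalities. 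In your primal formulation $g=\sfA\sfT h_\perp$ solves the same equation but with source $\zeta=(\sfT\sfPi)^*\sfT h_\perp$, which already carries two derivatives of $h_\perp$ and is not bounded in $\mathrm L^2(\rho_\star dx)$ by $\|h_\perp\nH$. Your energy identity $\|g\nH^2+\|\sfT\sfPi g\nH^2=-\irdx{K_{h_\perp}:\mathrm{Hess}(w_g)\,\rho_\star}$ is correct (the two integrations by parts, the cancellation of the $v\otimes v:\nabla_xW_\star\otimes\nabla_xw_g$ terms, and the vanishing of the first-order remainder and of the Poisson contribution all check out), and it is a genuinely different route into the problem. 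But the closing step fails.

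Concretely, the inequality you need, $\|\mathrm{Hess}(w_g)\|_{\mathrm L^2(\rho_\star dx)}^2\le a\,(\|g\nH^2+\|\sfT\sfPi g\nH^2)+b\,\|h_\perp\nH^2$, is false. In the constant-coefficient model (torus, $\rho_\star\equiv1$, $W_\star\equiv0$, no Poisson coupling), take $h_\perp(x,v)=\cos(Nx)\,H(v)$ with $\irdv{H\,\mathcal M}=0$ and $c:=\irdv{(v^2-1)H\,\mathcal M}\neq0$. Then $K_{h_\perp}=c\cos(Nx)$, $\zeta=N^2c\cos(Nx)$, $g=\frac{N^2c}{1+N^2}\cos(Nx)$, so $\|g\nH^2+\|\sfT\sfPi g\nH^2\sim N^2$, $\|h_\perp\nH^2\sim\mathrm{const}$, but $\|\mathrm{Hess}(w_g)\|^2\sim N^4$; both ratios diverge. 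The Fourier multiplier from $h_\perp$ to $g$ is bounded (so the lemma itself is true), but the multiplier from $h_\perp$ to $\mathrm{Hess}(w_g)$ is not, and dualizing is precisely what restores the bounded multiplier to the side you can estimate. For the same reason your proposal to control $\|(\sfT\sfPi)^*\sfT h_\perp\nH$ by ``iterating the IBP identity with test function $\xi=(\sfT\sfPi)^*\sfT h_\perp$'' cannot work: integration by parts only moves derivatives between factors, it cannot remove the two excess derivatives that $(\sfT\sfPi)^*\sfT$ puts on $h_\perp$. To repair the proof, follow the paper's Step~1: compute $\big(\sfA\sfT(\Id-\sfPi)\big)^*h=\Delta_xw_g-v\otimes v:\mathrm{Hess}(w_g)$, show $\big\|\big(\sfA\sfT(\Id-\sfPi)\big)^*h\nHH=2\irdx{|\mathrm{Hess}(w_g)|^2\rho_\star}$, run the elliptic estimates of Steps~2--4 against the $\mathrm L^2$-controlled source $u_h$, and conclude by duality.
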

\begin{proof} In order to get an estimate of $\|\sfA\sfT(\Id-\sfPi)\,h\nH$, we will compute $\|\sfA\sfT(\Id-\sfPi))^*h\nH$.

\medskip\noindent\emph{Step 1: Reformulation of the inequality as an elliptic regularity estimate.} We claim that
\be{Claim0}
\big\|\big(\sfA\sfT(\Id-\sfPi)\big)^*h\nHH=\irdmu{\big|\big(\sfA\sfT(\Id-\sfPi)\big)^*h\big|^2}=2\ird{|\mathrm{Hess}(w_g)|^2\rho_\star}\,,
\ee
where $w_g:=u_g+\psi_g$ and $-\Delta_x\psi_g=\rho_g$ is computed in terms of
\[
g=\big(\Id+(\sfT\sfPi)^\ast(\sfT\sfPi)\big)^{-1}h\,,
\]
which is obtained by solving the elliptic equation
\be{Eqn:g}
g-\Delta_xw_g+\nabla_xW_\star\cdot\nabla_xw_g=h\,.
\ee
Let $u_h=\sfPi h$ and $w_h:=u_h+\psi_h$. We observe that $\sfT\sfPi h=v\cdot\nabla_xw_h$, $\rho_{\sfT\sfPi h}=0$ and, as a consequence
\[
(\sfT\sfPi)^*(\sfT\sfPi)\,h=-\,\sfPi\sfT(\sfT\sfPi\,h)=-\,\Delta_xw_h+\nabla_xW_\star\cdot\nabla_xw_h=-\,e^{W_\star}\,\nabla_x\(e^{-W_\star}\,\nabla_xw_h\)
\]
where $W_\star=V+\phi_\star$ is such that $\rho_\star=e^{-W_\star}$. With $g$ obtained from~\eqref{Eqn:g}, we compute
\begin{multline*}
\big(\sfA\sfT(\Id-\sfPi)\big)^*h=-\,(\Id-\sfPi)\sfT\sfA^*h=-\,(\Id-\sfPi)\sfT(\sfT\sfPi)\big(\Id+(\sfT\sfPi)^\ast(\sfT\sfPi)\big)^{-1}h\\
=-\,(\Id-\sfPi)\sfT(\sfT\sfPi)g=-\,(\Id-\sfPi)\,\big(v\otimes v:\mathrm{Hess}(w_g)\big)=\Delta_xw_g-v\otimes v:\mathrm{Hess}(w_g)
\end{multline*}
where $\mathrm{Hess}(w)=(\nabla_x\otimes\nabla_x)w$ denotes the Hessian of $w$. Hence, with $|\mathrm{Hess}(w)|^2=\mathrm{Hess}(w):\mathrm{Hess}(w)$, we obtain~\eqref{Claim0} using the following elementary computation

Let $\aa=(\aa_{ij})_{i,j=1}^d$ be a symmetric matrix with coefficients which do not depend on $v$. We compute $\mathsf S:=\irdM{\(\aa:v\otimes v-\mathrm{Tr}(\aa)\)^2}$ as follows. Using
\begin{align*}
\(\aa:v\otimes v-\mathrm{Tr}(\aa)\)^2&=\(\sum_{i,j=1}^d\aa_{ij}\,v_i\,v_j-\sum_{i=1}^d\aa_{ii}\)^2\\
&=\(\sum_{i,j=1}^d\aa_{ij}\,v_i\,v_j\)^2-2\(\sum_{i=1}^d\aa_{ii}\)\(\sum_{i,j=1}^d\aa_{ij}\,v_i\,v_j\)+\(\sum_{i=1}^d\aa_{ii}\)^2
\end{align*}
and $\irdM{v_i\,v_j}=\delta_{ij}$, we obtain
\[
\mathsf S=\irdM{\(\sum_{i,j=1}^d\aa_{ij}\,v_i\,v_j\)^2}-\(\sum_{i=1}^d\aa_{ii}\)^2\,.
\]
Since
\[
\(\sum_{i,j=1}^d\aa_{ij}\,v_i\,v_j\)^2=\(\sum_{i\neq j=1}^d\aa_{ij}\,v_i\,v_j\)^2+\(\sum_{i=1}^d\aa_{ii}\,v_i^2\)^2+2\sum_{i\neq j=1}^d\sum_{k=1}^d\aa_{ij}\,\aa_{kk}\,v_i\,v_j\,v_k^2\,,
\]
$\irdM{v_i^2}=1$, and $\irdM{v_i^4}=3$, the computation simplifies to
\begin{align*}
\irdM{\(\sum_{i,j=1}^d\aa_{ij}\,v_i\,v_j\)^2}&=2\sum_{i\neq j=1}^d\aa_{ij}^2+\sum_{i\neq j=1}^d\aa_{ii}\,\aa_{jj}+3\sum_{i=1}^d\aa_{ii}^2\\
&=2\sum_{i,j=1}^d\aa_{ij}^2+\(\sum_{i=1}^d\aa_{ii}\)^2\,.
\end{align*}
Altogether, this proves that
\[
\mathsf S=2\sum_{i,j=1}^d\aa_{ij}^2=2\,|\aa|^2\,.
\]
The result follows with $\aa=\mathrm{Hess}(w_g)$. A bound on $\ird{|\mathrm{Hess}(w_g)|^2\rho_\star}$ will now be obtained by elliptic regularity estimates based on~\eqref{Eqn:g}.

\medskip\noindent\emph{Step 2: Some $\mathrm H^1$-type estimates.} By integrating~\eqref{Eqn:g} against $\mathcal M(v)\,dv$, we notice that
\be{Eqn:g2}
u_g-\frac1{\rho_\star}\,\nabla_x\cdot\big(\rho_\star\,\nabla_xw_g\big)=u_h
\ee
so that
\be{ZeroAverage}
\irdx{u_g\,\rho_\star}=\irdx{u_h\,\rho_\star}=\irdmu h=0\,.
\ee
If we multiply~\eqref{Eqn:g2} by $w_g\,\rho_\star$ and integrate over $\R^d$, we get after an integration by parts that
\[
\irdx{u_g\,(u_g+\psi_g)\,\rho_\star}+\irdx{|\nabla_xw_g|^2\,\rho_\star}=\irdx{u_h\,(u_g+\psi_g)\,\rho_\star}\,.
\]
Using $\irdx{u_g\,\psi_g\,\rho_\star}=\irdx{|\nabla_x\psi_g|^2}$ and $\irdx{u_h\,\psi_g\,\rho_\star}=\irdx{\nabla_x\psi_h\cdot\nabla_x\psi_g}$ on the one hand, and the elementary estimates
\begin{align*}
&\left|\irdx{u_h\,u_g\,\rho_\star}\right|\le\frac12\irdx{\(|u_g|^2+|u_h|^2\)\rho_\star}\,,\\
&\left|\irdx{\nabla_x\psi_h\cdot\nabla_x\psi_g}\right|\le\frac12\irdx{\(|\nabla_x\psi_h|^2+|\nabla_x\psi_g|^2\)}\,,
\end{align*}
on the other hand, we obtain that
\be{Claim1}
\irdx{|u_g|^2\,\rho_\star}+\irdx{|\nabla_x\psi_g|^2}+2\irdx{|\nabla_xw_g|^2\,\rho_\star}\le\|\sfPi\,h\nH^2
\ee
where
\[
\|\sfPi\,h\nH^2=\irdx{|u_h|^2\,\rho_\star}+\irdx{|\nabla_x\psi_h|^2}\,.
\]
Using $|\nabla_xu_g|^2=|\nabla_xw_g-\nabla_x\psi_g|^2\le2\(|\nabla_xw_g|^2+|\nabla_x\psi_g|^2\)$, we deduce from~\eqref{Claim1} that
\be{Claim2}
\irdx{|\nabla_xu_g|^2\,\rho_\star}\le2\irdx{|\nabla_xw_g|^2\,\rho_\star}+2\irdx{|\nabla_x\psi_g|^2\,\rho_\star}\le\mathcal K\,\|\sfPi\,h\nH^2
\ee
with $\mathcal K=1+2\,\|\rho_\star\|_{\mathrm L^\infty(\R^d,dx)}$.

\medskip\noindent\emph{Step 3: Weighted Poincar\'e inequalities and weighted $\mathrm H^1$-type estimates.} The solution $u_g$ of~\eqref{Eqn:g2} has zero average according to~\eqref{ZeroAverage}. We deduce from Corollary~\ref{Cor:Poincare} that
\[
\irdx{|\nabla_xu_g|^2\,\rho_\star}\ge\mathcal C\irdx{|u_g|^2\,|\nabla_xW_\star|^2\,\rho_\star}\,,
\]
from which we get that
\be{ClaimP1}
X_1^2:=\irdx{|u_g|^2\,|\nabla_xW_\star|^2\,\rho_\star}\le\frac{\mathcal K}{\mathcal C}\,\|\sfPi\,h\nH^2\,.
\ee

Next, we look for a similar estimate for $\irdx{|\psi_g|^2\,|\nabla_xW_\star|^2\,\rho_\star}$. The potential $\psi_g$ has generically a non-zero average $\overline\psi_g:=\frac1M\irdx{\psi_g\,\rho_\star}$ which can be estimated by
\begin{multline*}
M^2\,|\overline\psi_g|^2=\(\irdx{\psi_g\,\rho_\star}\)^2=\(\irdx{\psi_g\,(-\Delta_x\phi_\star)}\)^2=\(\irdx{(-\Delta_x\psi_g)\,\phi_\star}\)^2\\
=\(\irdx{u_g\,\phi_\star\,\rho_\star}\)^2\le\irdx{|\phi_\star|^2\,\rho_\star}\irdx{|u_g|^2\,\rho_\star}\le\kappa_1\,\|\sfPi\,h\nH^2
\end{multline*}
with $\kappa_1:=\irdx{|\phi_\star|^2\,\rho_\star}$, using~\eqref{Claim1}. Since $\nabla_x\rho_\star=-\nabla_xW_\star\,\rho_\star$, we also have
\[
\irdx{\psi_g\,|\nabla_xW_\star|^2\,\rho_\star}=-\irdx{\psi_g\,\nabla_xW_\star\cdot\nabla_x\rho_\star}=\irdx{\(\psi_g\,\Delta_xW_\star+\nabla_x\psi_g\cdot\nabla_xW_\star\)\rho_\star}
\]
and, using the Cauchy-Schwarz inequality,
\begin{multline*}
\(\irdx{\psi_g\,|\nabla_xW_\star|^2\,\rho_\star}\)^2\le\irdx{|\psi_g|^2\,\rho_\star}\irdx{(\Delta_xW_\star)^2\,\rho_\star}\\+\irdx{|\nabla_x\psi_g|^2}\,\|\rho_\star\|_{\mathrm L^\infty(\R^d,dx)}\irdx{|\nabla_xW_\star|^2\,\rho_\star}\,.
\end{multline*}
By Lemma~\ref{Lem:Poincare} applied to $\psi_g-\overline\psi_g$, 
\[
\mathcal C_\star\irdx{|\psi_g|^2\,\rho_\star}\le\|\rho_\star\|_{\mathrm L^\infty(\R^d,dx)}\irdx{|\nabla_x\psi_g|^2}+\mathcal C_\star\,|\overline\psi_g|^2\,,
\]
and~\eqref{Claim1}, we conclude that
\[
\(\irdx{\psi_g\,|\nabla_xW_\star|^2\,\rho_\star}\)^2\le\kappa_2\,\|\sfPi\,h\nH^2
\]
where
\[
\kappa_2:=\(\frac1{\mathcal C_\star}\irdx{(\Delta_xW_\star)^2\,\rho_\star}+\irdx{|\nabla_xW_\star|^2\,\rho_\star}\)\|\rho_\star\|_{\mathrm L^\infty(\R^d,dx)}+\frac{\kappa_1}{M^2}\irdx{(\Delta_xW_\star)^2\,\rho_\star}\,.
\]
By applying Corollary~\ref{Cor:Poincare} to $\psi_g-\overline\psi_g$, we deduce from
\[
\mathcal C\irdx{|\psi_g-\overline\psi_g|^2\,|\nabla_xW_\star|^2\,\rho_\star}\le\irdx{|\nabla_x\psi_g|^2\,\rho_\star}
\]
that
\[
\mathcal C\irdx{|\psi_g|^2\,|\nabla_xW_\star|^2\,\rho_\star}\le\irdx{|\nabla_x\psi_g|^2\,\rho_\star}+2\,\mathcal C\,\overline\psi_g\irdx{\psi_g\,\rho_\star\,|\nabla_xW_\star|^2\,\rho_\star}\,.
\]
Hence
\be{ClaimP2}
\irdx{|\psi_g|^2\,|\nabla_xW_\star|^2\,\rho_\star}\le\(\frac{\|\rho_\star\|_{\mathrm L^\infty(\R^d,dx)}}{\mathcal C}+2\,\frac{\sqrt{\kappa_1\,\kappa_2}}M\)\|\sfPi\,h\nH^2\,.
\ee

Now we use~\eqref{ClaimP1} and~\eqref{ClaimP2} to estimate the weighted $\mathrm H^1$-type quantity
\[
X_2^2:=\irdx{|\nabla_xu_g|^2\,|\nabla_xW_\star|^2\,\rho_\star}\,.
\]
Let us multiply~\eqref{Eqn:g2} by $u_g\,|\nabla_xW_\star|^2\,\rho_\star$ and integrate by parts in order to obtain
\begin{multline*}
\irdx{|u_g|^2\,|\nabla_xW_\star|^2\,\rho_\star}+\irdx{|\nabla_xu_g|^2\,|\nabla_xW_\star|^2\,\rho_\star}\\
+\irdx{(\nabla_xu_g\cdot\nabla_x\psi_g)\,|\nabla_xW_\star|^2\,\rho_\star}+\irdx{u_g\,\nabla_x\(|\nabla_xW_\star|^2\)(\nabla_xu_g+\nabla_x\psi_g)\,\rho_\star}\\
=\irdx{u_h\,u_g\,|\nabla_xW_\star|^2\,\rho_\star}\,.
\end{multline*}
Using Lemma~\ref{Lem:PoincareLambda2}, we obtain
\[
\left|\int_{|x|>R}u_g\,\nabla_x\(|\nabla_xW_\star|^2\)\nabla_xu_g\,\rho_\star\,dx\right|\le\Lambda_\circ\irdx{|u_g|\,|\nabla_xW_\star|^2\,|\nabla_xu_g|\,\rho_\star}\le\Lambda_\circ\,X_1\,X_2\,.
\]
Using~\eqref{Claim1},~\eqref{Claim2}, Lemma~\ref{Lem:PoincareLambda3} and the fact that $1/\rho_\star$ is bounded on $B_R$, we obtain
\[
\left|\int_{|x|\le R}u_g\,\nabla_x\(|\nabla_xW_\star|^2\)\nabla_xu_g\,\rho_\star\,dx\right|\le\mathcal K\,\left\|\nabla_x\(|\nabla_xW_\star\)|^2\right\|_{\mathrm L^\infty(B_R)}\,\|\sfPi\,h\nH^2
\]
and, by similar arguments,
\[
\irdx{(\nabla_xu_g\cdot\nabla_x\psi_g)\,|\nabla_xW_\star|^2\,\rho_\star}\le\kappa_3\,X_2\,\|\sfPi\,h\nH\,,
\]
\[
\irdx{u_g\,\nabla_x\(|\nabla_xW_\star|^2\)\nabla_x\psi_g\,\rho_\star}\le\kappa_4\,X_1\,\|\sfPi\,h\nH\,,
\]
with
\[
\kappa_3:=\left\|\,|\nabla_xW_\star|^2\,\rho_\star\right\|_{\mathrm L^\infty(\R^d,dx)}^{1/2}\quad\mbox{and}\quad\kappa_4:=\left\|\,\big|\nabla_x\(|\nabla_xW_\star|^2\)\big|^2\rho_\star\right\|_{\mathrm L^\infty(\R^d,dx)}^{1/2}\,,
\]
because we know from~\eqref{Claim1} that $\irdx{|\nabla_x\psi_g|^2}\le\|\sfPi\,h\nH^2$. Using Corollary~\ref{Cor:Poincare2}, we obtain that
\[
\(\irdx{u_h\,u_g\,|\nabla_xW_\star|^2\,\rho_\star}\)^2\le\irdx{|u_h|^2\,\rho_\star}\irdx{|u_g|^2\,|\nabla_xW_\star|^4\,\rho_\star}\le\|\sfPi\,h\nH^2\,\frac{X_2^2}{\mathcal C_\circ}\,.
\]
Summarizing, we have shown that
\[
X_1^2+X_2^2-\kappa_3\,X_2\,\|\sfPi\,h\nH-\Lambda_\circ\,X_1\,X_2-\kappa_4\,X_1\,\|\sfPi\,h\nH\le X_2\,\frac{\|\sfPi\,h\nH}{\sqrt{\mathcal C_\circ}}+\mathcal K\,\left\|\nabla_x\(|\nabla_xW_\star\)|^2\right\|_{\mathrm L^\infty(B_R)}\,\|\sfPi\,h\nH^2\,.
\]
Since $X_1^2$ is bounded by $\|\sfPi h\nH^2$, we conclude that
\be{ClaimX}
X_2^2\le\kappa\,\|\sfPi h\nH^2
\ee
for some $\kappa>0$, which has an explicit form in terms quantities involving $\rho_\star$ and its derivatives, as well as all constants in the inequalities of Sections~\ref{Sec:Poincare} and~\ref{Sec:Pointwise}.

\medskip\noindent\emph{Step 4: Second order estimates.} After multiplying~\eqref{Eqn:g2} by $\nabla_x\cdot\big(\rho_\star\,\nabla_xw_g\big)$, we have
\begin{align*}
\irdx{\frac1{\rho_\star}\left|\nabla_x\cdot\(\rho_\star\,\nabla_xw_g\)\right|^2}&=\irdx{(u_h-u_g)\,\nabla_x\cdot\big(\rho_\star\,\nabla_xw_g\big)}\\
&=\irdx{u_h\,\sqrt{\rho_\star}\,\frac1{\sqrt{\rho_\star}}\,\nabla_x\cdot\big(\rho_\star\,\nabla_xw_g\big)}+\irdx{\nabla_xu_g\cdot\nabla_xw_g\,\rho_\star}\\
&\le\frac12\irdx{\(|u_h|^2\,\rho_\star+\frac1{\rho_\star}\left|\nabla_x\cdot\(\rho_\star\,\nabla_xw_g\)\right|^2\)}\\
&\hspace*{5cm}+\frac12\irdx{\(|\nabla_xu_g|^2+|\nabla_xw_g|^2\)\rho_\star}
\end{align*}
and after using~\eqref{Claim1} and~\eqref{Claim2}, we obtain that
\be{Claim3}
\irdx{\frac1{\rho_\star}\left|\nabla_x\cdot\(\rho_\star\,\nabla_xw_g\)\right|^2}\le\(\mathcal K+\frac32\)\|\sfPi\,h\nH^2\,.
\ee

Let $X_3:=\(\irdx{\big(\nabla_xw_g\cdot\nabla_xW_\star\big)^2\,\rho_\star}\)^{1/2}$. After multiplying~\eqref{Eqn:g2} by $\big(\nabla_xw_g\cdot\nabla_xW_\star\big)\,\rho_\star$, we have that
\[
X_3^2-\irdx{\Delta_xw_g\,\big(\nabla_xw_g\cdot\nabla_xW_\star\big)\,\rho_\star}=\irdx{(u_h-u_g)\,\big(\nabla_xw_g\cdot\nabla_xW_\star\big)\,\rho_\star}\,.
\]
Using the Cauchy-Schwarz inequality, we know that the right-hand side can be estimated by $X_3\(\irdx{|u_g|^2\,\rho_\star}\)^{1/2}+X_3\(\irdx{|u_h|^2\,\rho_\star}\)^{1/2}\le2\,X_3\,\|\sfPi\,h\nH$ according to~\eqref{Claim1} and obtain that
\[
X_3^2-2\,X_3\,\|\sfPi\,h\nH\le\irdx{\Delta_xw_g\,\big(\nabla_xw_g\cdot\nabla_xW_\star\big)\,\rho_\star}\,.
\]
Let us notice that
\begin{multline*}
\irdx{\Delta_xw_g\,\big(\nabla_xw_g\cdot\nabla_xW_\star\big)\,\rho_\star}=-\irdx{\Delta_xw_g\,\nabla_xw_g\cdot\nabla_x\rho_\star}\\
=\irdx{\(\mathrm{Hess}(\rho_\star)-\frac 12\,\Delta_x\rho_\star\,\Id\):\nabla_xw_g\otimes\nabla_xw_g}\,.
\end{multline*}
As a consequence, by Lemma~\ref{Lem:PoincareLambda} and~\eqref{Claim1}, we arrive at
\[
X_3^2-2\,X_3\,\|\sfPi\,h\nH\le\frac{\Lambda_\star}2\,\irdx{|\nabla_xw_g|^2\,|\nabla_xW_\star|^2\,\rho_\star}\le{\Lambda_\star}\,X_2^2
+{\Lambda_\star}\,\irdx{|\nabla_x\psi_g|^2\,|\nabla_xW_\star|^2\,\rho_\star}\]
where $X_2^2$ is the quantity that has been estimated in Step 4. Altogether, after taking~\eqref{Claim1} and~\eqref{ClaimX} into account and with
\[
\lambda=\Lambda_\star\(\kappa+\left\|\nabla_xW_\star|^2\,\rho_\star\right\|_{\mathrm L^\infty(\R^d)}\)\,,
\]
which is finite by Lemma~\ref{Lem:PoincareLambda3}, this proves that
\be{Claim4}
\irdx{\big(\nabla_xw_g\cdot\nabla_xW_\star\big)^2\,\rho_\star}\le\(\sqrt{1+\lambda}-1\)^2\,\|\sfPi\,h\nH^2\,.
\ee

\medskip\noindent\emph{Step 5: Conclusion of the proof.} We read from Lemma~\ref{Lem:BLW},~\eqref{Claim0} and~\eqref{Claim3}-\eqref{Claim4} that
\[
\big\|\big(\sfA\sfT(\Id-\sfPi)\big)^*h\nHH\le2\ird{|\mathrm{Hess}(w_g)|^2\rho_\star}\le2\(6\(\mathcal K+\tfrac32\)+8\,\(\sqrt{1+\lambda}-1\)^2\)\,\|\sfPi\,h\nH^2\,,
\]
which concludes the proof of Lemma~\ref{T(1moinsPi)}.\end{proof}

\subsection{Proof of Theorem~\texorpdfstring{\ref{Thm:Main}}{1}}

The potential $V(x)=|x|^\alpha$ satisfies the assumptions~\eqref{Hyp-V1}, \eqref{Hyp-V2}, \eqref{Hyp-V3b}, \eqref{Hyp-V4}, \eqref{Hyp-V5}, \eqref{Hyp-V6}, \eqref{Hyp-V7} and \eqref{Hyp-V8} if $\alpha>1$. The result is then a consequence of Proposition~\ref{DMS} and Lemmas~\ref{Lem:Operators}-\ref{T(1moinsPi)}. A slightly more general result goes as follows.
\begin{theorem}\label{Thm:MainBis} Let us assume that $d\ge1$ and $M>0$. If $V$ satisfies the assumptions~\eqref{Hyp-V1}, \eqref{Hyp-V2}, \eqref{Hyp-V3b}, \eqref{Hyp-V4}, \eqref{Hyp-V5}, \eqref{Hyp-V6}, \eqref{Hyp-V7} and \eqref{Hyp-V8}, then there exist two constants $\lambda>0$ and $\mathcal C>1$ such that any solution $h$ of~\eqref{Syst:VPFPlin} with an initial datum~$h_0$ of zero average such that $\nrm{h_0}{}^2<\infty$ satisfies
\[
\nrm{h(t,\cdot,\cdot)}{}^2\le\mathcal C\,\nrm{h_0}{}^2\,e^{-\lambda t}\quad\forall\,t\ge0\,.
\]
\end{theorem}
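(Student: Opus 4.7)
The plan is to apply the abstract hypocoercivity result of Proposition~\ref{DMS} in the Hilbert space $(\mathcal H,\scalar{\cdot}{\cdot})$ of Section~\ref{Sec:ScalarProduct}, with the operators $\sfT$ and $\sfL$ defined in~\eqref{TL}. The first step is to rewrite~\eqref{Syst:VPFPlin} in the abstract form $dh/dt+\sfT h=\sfL h$: since the non-local term $v\cdot\nabla_x\psi_h$ is part of the definition of $\sfT$, the linearized system is precisely of this form, and Lemma~\ref{Lem:Hilbert} guarantees that $\mathcal H$ is a genuine Hilbert space under Assumptions~\eqref{Hyp-V1}--\eqref{Hyp-V2}. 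Lemma~\ref{Lem:Operators} shows that $\sfL$ is symmetric and $\sfT$ is antisymmetric for the Poisson-twisted scalar product, which is the compatibility needed by the abstract framework.

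Next, I would verify the four hypocoercivity axioms~\eqref{H1}--\eqref{H4}. The orthogonal projector onto $\mathrm{Ker}(\sfL)$ is $\sfPi h=u_h=\rho_h/\rho_\star$ (Lemma~\ref{Lem:Projection}), which is essential because it defines a macroscopic quantity whose twist will carry the non-local Poisson term inside the scalar product. Microscopic coercivity~\eqref{H1} with $\lambda_m=1$ follows from the Gaussian Poincar\'e inequality applied to $v\mapsto h(x,v)$ fiberwise, as in Lemma~\ref{Lem:MicroscopicCoercivity}; this uses only~\eqref{Hyp-V1}--\eqref{Hyp-V2}. Parabolic macroscopic dynamics~\eqref{H3} is immediate from the odd parity of $v\mapsto v\cdot(\nabla_xu_h+\nabla_x\psi_h)$ under $\mathcal M\,dv$ (Lemma~\ref{Lem:ParabolicMacroDyn}). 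Macroscopic coercivity~\eqref{H2} with $\lambda_M=\mathcal C_\star$ is the Poincar\'e inequality of Lemma~\ref{Lem:Poincare} for the measure $\rho_\star\,dx$; the twist in the scalar product is precisely what transforms $\|\sfT\sfPi h\nH^2=\irdx{|\nabla_x u_h+\nabla_x\psi_h|^2\,\rho_\star}$ into a quantity controlling $\|\sfPi h\nH^2=\irdx{|u_h|^2\rho_\star}+\irdx{|\nabla_x\psi_h|^2}$, via the key nonnegativity $\irdx{u_h\,\psi_h\,\rho_\star}=\irdx{|\nabla_x\psi_h|^2}\ge0$ and a Cauchy-Schwarz argument on the mean of $\psi_h$, as already carried out in Lemma~\ref{Lem:MacroscopicCoercivity}. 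This step uses~\eqref{Hyp-V3a} (which is implied by~\eqref{Hyp-V3b}) and~\eqref{Hyp-V4}.

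The bounded auxiliary operator condition~\eqref{H4} is the deep step and where all the remaining assumptions are used: Lemma~\ref{Lem:BoundedAuxiliaryOperators} gives $\|\sfA\sfL h\nH\le\frac12\|(\Id-\sfPi)h\nH$ at once, while Lemma~\ref{T(1moinsPi)} provides the bound $\|\sfA\sfT(\Id-\sfPi)h\nH\le\mathcal C\,\|(\Id-\sfPi)h\nH$ by reducing the dual norm $\|(\sfA\sfT(\Id-\sfPi))^*h\nH^2$ to a weighted $\mathrm H^2$-norm of the auxiliary potential $w_g=u_g+\psi_g$ solving~\eqref{Eqn:g}, then closing the estimate via the Bochner-Lichnerowicz-Weitzenb\"ock identity of Lemma~\ref{Lem:BLW} and the weighted Poincar\'e and pointwise bounds of Corollaries~\ref{Cor:Poincare}, \ref{Cor:Poincare2} and Lemmas~\ref{Lem:PoincareLambda}--\ref{Lem:PoincareLambda3}. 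Taking all these pieces for granted under the full list of assumptions~\eqref{Hyp-V1}--\eqref{Hyp-V8} yields a constant $C_M$ in~\eqref{H4}.

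Having verified~\eqref{H1}--\eqref{H4} with explicit constants $\lambda_m$, $\lambda_M$, $C_M$, I would then fix any $\delta\in(0,\delta_\star)$ with $\delta_\star=\min\{2,\lambda_m,4\lambda_m\lambda_M/(4\lambda_M+C_M^2(1+\lambda_M))\}$ and invoke Proposition~\ref{DMS}: the Lyapunov functional $\sfH[h]=\tfrac12\|h\nH^2+\delta\,\scalar{\sfA h}{h}$ is equivalent to $\|h\nH^2$ with ratio $(2\pm\delta)/4$ thanks to~\eqref{H-norm}, and satisfies $-d\sfH[h]/dt\ge\lambda\,\sfH[h]$ for some explicit $\lambda>0$. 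Gronwall's lemma then yields
\[
\|h(t,\cdot,\cdot)\nH^2\le\frac{2+\delta}{2-\delta}\,\|h_0\nH^2\,e^{-\lambda t}\quad\forall\,t\ge0,
\]
which is~\eqref{Ineq:LinDecay} with $\mathcal C=(2+\delta)/(2-\delta)>1$. The main obstacle is not this final assembly but the weighted elliptic estimate hidden in Lemma~\ref{T(1moinsPi)}, whose closure requires the chain of weighted Poincar\'e inequalities with growing weights $|\nabla_xW_\star|^{2k}$ from Section~\ref{Sec:Poincare}--\ref{Sec:Pointwise}; verifying that $V(x)=|x|^\alpha$ with $\alpha>1$ (hence $W_\star$, in view of Lemma~\ref{Lem:Poisson-Boltzmann}) indeed satisfies all of~\eqref{Hyp-V3b}--\eqref{Hyp-V8} recovers Theorem~\ref{Thm:Main} as a special case.
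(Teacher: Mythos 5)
Your proposal is correct and follows exactly the route the paper takes: the paper proves Theorem~\ref{Thm:MainBis} precisely by invoking Proposition~\ref{DMS} once Lemmas~\ref{Lem:Operators}--\ref{T(1moinsPi)} have verified~\eqref{H1}--\eqref{H4} under Assumptions~\eqref{Hyp-V1}--\eqref{Hyp-V8}, and then reading off $\mathcal C=(2+\delta)/(2-\delta)$ and $\lambda$ from~\eqref{HypocoRate}. You also correctly observe that~\eqref{Hyp-V3b} implies~\eqref{Hyp-V3a}, which is the small bookkeeping point needed to see that the listed assumptions cover Lemma~\ref{Lem:MacroscopicCoercivity}.
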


\section{Uniform estimates in the diffusion limit}\label{Sec:DiffusionLimit}

The hypocoercivity method of~\cite{Dolbeault2009511,MR3324910} is directly inspired by the drift-diffusion limit, as it relies on a micro/macro decomposition in which the relaxation in the velocity direction is given by the microscopic coercivity property~\eqref{H1} while the relaxation in the position direction arises from the macroscopic coercivity property~\eqref{H2} which governs the relaxation of the solution of the drift-diffusion equation obtained as a limit.

\subsection{Formal macroscopic limit.}

Let us start with a formal analysis in the framework of Section~\ref{Sec:Abstract}, when~\eqref{EqnEvol} is replaced by the scaled evolution equation
\be{EqnEvol-ve}
\ve\,\frac{dF}{dt}+\sfT F=\frac1\ve\,\sfL F
\ee
on the Hilbert space $\mathcal H$. We assume that a solution $F_\ve$ of~\eqref{EqnEvol-ve} can be expanded as
\[
F_\ve=F_0+\ve\,F_1+\ve^2\,F_2+{\mathcal{O}(\ve^3)}
\]
in the asymptotic regime corresponding to $\ve\to0_+$ and, at formal level, that~\eqref{EqnEvol-ve} can be solved order by order:
\[
\begin{array}{ll}
\ve^{-1}:&\quad\sfL F_0=0\,,\\[4pt]
\ve^0:&\quad\sfT F_0=\sfL F_1\,,\\[4pt]
\ve^1:&\quad\frac{dF_0}{dt}+\sfT F_1=\sfL F_2\,.
\end{array}
\]
The first equation reads as $F_0=\sfPi F_0$, that is, $F_0$ is in the kernel of $\sfL$. Assume for simplicity that $\sfL^{-1}\,(\sfT\sfPi)=-\,\sfT\sfPi$ on an appropriate subspace, so that the second equation is simply solved by $F_1=-\,(\sfT\sfPi)\,F_0$. Let us consider the projection on the kernel of the $\mathcal O(\ve^1)$ equation:
\[
\frac d{dt}\(\sfPi F_0\)-\,\sfPi\sfT\,(\sfT\sfPi)\,F_0=\sfPi\sfL F_2=0\,.
\]
If we denote by $u$ the quantity $F_0=\sfPi F_0$ and use~\eqref{H3}, then $-\,(\sfPi\sfT)\,(\sfT\sfPi)=(\sfT\sfPi)^*\,(\sfT\sfPi)$ and the equation becomes
\[\label{DDlimit}
\partial_tu+(\sfT\sfPi)^*\,(\sfT\sfPi)\,u=0\,,
\]
which is our \emph{drift-diffusion} limit equation. Notice that if $u$ solves this equation, then
\[
\frac d{dt}\|u\nH^2=-\,2\,\|(\sfT\sfPi)\,u\nH^2\le-\,2\,\lambda_M\,\|u\nH^2
\]
according to~\eqref{H2}. This program applies in the case of the scaled evolution equation~\eqref{Syst:VPFPlinParab}. Let us give a few additional details.

\medskip Let us assume that a solution $h_\ve$ of~\eqref{Syst:VPFPlinParab} can be expanded as $h_\ve=h_0+\ve\,h_1+\ve^2\,h_2+{\mathcal{O}(\ve^3)}$, in the asymptotic regime as $\ve\to0_+$. Solving~\eqref{Syst:VPFPlinParab} order by order in $\ve$, we find the equations
\[
\begin{array}{ll}
\ve^{-1}:&\quad\Delta_vh_0-v\cdot\nabla_vh_0=0\,,\\[4pt]
\ve^0:&\quad v\cdot\nabla_xh_0-\nabla_xW_\star\cdot\nabla_vh_0+v\cdot\nabla_x\psi_{h_0}=\Delta_vh_1-v\cdot\nabla_vh_1\,,\\[4pt]
\ve^1:\quad&\quad\partial_th_0+v\cdot\nabla_xh_1-\nabla_xW_\star\cdot\nabla_vh_1+v\cdot\nabla_x\psi_{h_1}=\Delta_vh_2-v\cdot\nabla_vh_2\,.
\end{array}
\]
Let us define $u=\sfPi h_0$, $\psi=\psi_{h_0}$ such that $-\Delta_x\psi=u\,\rho_\star$, $w=u+\psi$ and observe that the first two equations simply mean
\[
u=h_0\,,\quad v\cdot\nabla_xw=\Delta_vh_1-v\cdot\nabla_vh_1\,,
\]
from which we deduce that $h_1=-\,v\cdot\nabla_xw$. After projecting with~$\sfPi$, the third equation is 
\[
\partial_tu-\Delta_xw+\nabla_xW_\star\cdot\nabla_xw=0\,,
\]
using $\irdv{v\otimes v\,\mathcal M(v)}=\Id$. If we define $\rho=u\,\rho_\star$, we have formally obtained that it solves
\[
\partial_t\rho=\Delta_x\rho+\nabla_x\cdot\Big(\rho\(\nabla_xV+\nabla_x\phi_\star\)\Big)+\nabla_x\cdot\(\rho_\star\,\nabla_x\psi\)\,,\quad-\,\Delta_x\psi=\rho\,.
\]
At this point, we can notice that the solution $\rho$ converges to $\rho_\star$ according to the results of, \emph{e.g.},~\cite{Nernst-Planck}, at an exponential rate which is independent of $\ve$.

\subsection{Hypocoercivity}

Let us adapt the computations of Section~\ref{Sec:Abstract} to the case $\ve\in(0,1)$ as in~\cite{BDMMS}. If $F$ solves~\eqref{EqnEvol-ve}, then
\begin{multline*}
-\,\ve\,\frac d{dt}\sfH[F]=\mathsf D_{\delta,\ve}[F]\,,\\
\mathsf D_{\delta,\ve}[F]:=-\,\frac1\ve\,\scalar{\sfL F}F+\delta\,\scalar{\sfA\sfT\sfPi F}F-\,\delta\,\scalar{\sfT\sfA F}F+\delta\,\scalar{\sfA\sfT(\Id-\sfPi)F}F-\frac\delta\ve\,\scalar{\sfA\sfL F}F\,.
\end{multline*}
The estimates are therefore exactly the same as in Proposition~\ref{DMS}, up to the replacement of~$\lambda_m$ by $\lambda_m/\ve$ and $C_M$ by $C_M/\ve$. Hence, for $\ve>0$ small enough, we have that
\[
\delta(\ve):=\min\left\{2,\,\frac{\lambda_m}\ve,\,\ve\,\lambda_\star(\ve)\right\}=\frac{4\,\lambda_m\,\lambda_M\,\ve}{4\,\lambda_M\,\ve^2+C_M^2\,(1+\lambda_M)}\,.
\]
We may notice that $\lim_{\ve\to0_+}\frac{\delta(\ve)}\ve=2\,\zeta$ with
\[
\zeta:=\frac{2\,\lambda_m\,\lambda_M}{C_M^2\,(1+\lambda_M)}
\]
and, for $\ve>0$ small enough,
\[
\frac{2-\,\zeta\,\ve}4\,\|F\nH^2\le\mathsf H_{\,\zeta\,\ve}[F]\le\frac{2+\zeta\,\ve}4\,\|F\nH^2\quad\forall F\in\mathcal H\,.
\]
By revisiting the proof of Proposition~\ref{DMS}, we find that with $\delta=\zeta\,\ve$ and $\lambda=\eta\,\ve$ with
\[
\eta:=\frac{\lambda_m\,\lambda_M^2}{C_M^2\,(1+\lambda_M)^2}\,,
\]
the quadratic form
\[
(X,Y)\mapsto\(\frac{\lambda_m}\ve-\,\delta\)X^2+\frac{\delta\,\lambda_M}{1+\lambda_M}\,Y^2-\,\delta\,\frac{C_M}\ve\,X\,Y-\frac\lambda2\,\(X^2+Y^2\)-\frac\lambda2\,\delta\,X\,Y
\]
is a nonnegative quadratic form for $\ve>0$ small enough. In the regime as $\ve\to0_+$, the result of Proposition~\ref{DMS} can be adapted as follows.
\begin{corollary}\label{DMS2} Assume that~\eqref{H1}--\eqref{H4} hold and take $\zeta$ as above. Then for $\ve>0$ small enough,
\[
\eta\,\ve\,\mathsf H_{\,\zeta\,\ve}[F]\le\mathsf D_{\,\zeta\,\ve,\ve}[F]\quad\forall F\in\mathcal H\,.
\]
\end{corollary}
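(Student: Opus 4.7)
The strategy is to reprise the proof of Proposition~\ref{DMS} \emph{verbatim}, treating $\ve$ as a formal parameter. Inspection of $\mathsf D_{\delta,\ve}[F]$ shows that the only difference with $\mathsf D_\delta[F]$ is the factor $1/\ve$ in front of the two contributions involving the collision operator $\sfL$, namely $-\scalar{\sfL F}F$ and $-\scalar{\sfA\sfL F}F$. In the abstract scheme leading to~\eqref{Decay:H}, this is exactly equivalent to replacing $\lambda_m$ by $\lambda_m/\ve$ in the microscopic coercivity estimate~\eqref{H1} and $C_M$ by $C_M/\ve$ in the use of~\eqref{H4} to control $\scalar{\sfA\sfL F}F$. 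The norm equivalence~\eqref{H-norm}, the bounds~\eqref{Eqn:AF-TAF} on $\|\sfA F\nH$ and $\|\sfT\sfA F\nH$, and the estimate of $\scalar{\sfA\sfT(\Id-\sfPi)F}F$ are all $\ve$-independent since the operators $\sfA$, $\sfT$, $\sfPi$ are unchanged.

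Setting $X:=\|(\Id-\sfPi)F\nH$ and $Y:=\|\sfPi F\nH$, the same chain of manipulations as in Proposition~\ref{DMS} would yield
\[
\mathsf D_{\delta,\ve}[F]\ge\(\frac{\lambda_m}{\ve}-\delta\)X^2+\frac{\delta\,\lambda_M}{1+\lambda_M}\,Y^2-\delta\,\frac{C_M}{\ve}\,X\,Y,
\]
together with the upper bound $\mathsf H_\delta[F]\le\tfrac12\(X^2+Y^2\)+\tfrac\delta2\,X\,Y$. The target inequality $\lambda\,\mathsf H_\delta[F]\le\mathsf D_{\delta,\ve}[F]$ thus reduces to the nonnegativity of the $2\times2$ quadratic form displayed just before the statement in the text, that is, to the sign condition $\lambda_m/\ve-\delta-\lambda/2>0$ together with the discriminant inequality
\[
h_\ve(\delta,\lambda):=\delta^2\(\frac{C_M}{\ve}+\frac{\lambda}{2}\)^2-4\(\frac{\lambda_m}{\ve}-\delta-\frac{\lambda}{2}\)\(\frac{\delta\,\lambda_M}{1+\lambda_M}-\frac{\lambda}{2}\)\le0.
\]

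The main obstacle — and really the only one, since everything else is copied from Proposition~\ref{DMS} — is the algebraic verification that the calibrated choice $\delta=\zeta\,\ve$, $\lambda=\eta\,\ve$ satisfies both conditions uniformly for $\ve$ small. The sign condition $\lambda_m/\ve-(\zeta+\eta/2)\,\ve>0$ is immediate. For the discriminant, I expand $h_\ve(\zeta\ve,\eta\ve)$ in powers of $\ve$ and substitute the stated values of $\zeta$ and $\eta$ to obtain, at leading order,
\[
\zeta^2\,C_M^2-4\,\lambda_m\(\frac{\zeta\,\lambda_M}{1+\lambda_M}-\frac{\eta}{2}\)=\frac{4\,\lambda_m^2\,\lambda_M^2}{C_M^2\,(1+\lambda_M)^2}-\frac{6\,\lambda_m^2\,\lambda_M^2}{C_M^2\,(1+\lambda_M)^2}=-\frac{2\,\lambda_m^2\,\lambda_M^2}{C_M^2\,(1+\lambda_M)^2}<0,
\]
with an $\mathcal O(\ve^2)$ remainder. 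Hence $h_\ve(\zeta\ve,\eta\ve)<0$ for all sufficiently small $\ve>0$, which delivers the announced estimate. The scaling $\delta,\lambda\propto\ve$ is forced: any slower decay of $\delta$ would violate the discriminant at leading order as $\ve\to0_+$, while any faster decay would produce a suboptimal rate, so the values of $\zeta$ and $\eta$ are precisely those that balance the two leading contributions in $h_\ve$.
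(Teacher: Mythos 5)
Your proof is correct and follows essentially the same route as the paper: substitute $\delta=\zeta\ve$, $\lambda=\eta\ve$ into the discriminant inherited from Proposition~\ref{DMS} (with $\lambda_m\to\lambda_m/\ve$, $C_M\to C_M/\ve$), expand in powers of $\ve$, and observe that the leading-order constant term is strictly negative. The paper states the resulting polynomial condition in $\ve$ directly (using a compressed shorthand) and concludes it holds for $\ve$ small, whereas you make the leading coefficient $-\tfrac{2\,\lambda_m^2\,\lambda_M^2}{C_M^2\,(1+\lambda_M)^2}$ explicit, which is a slightly more transparent presentation of the same argument.
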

\begin{proof} The range for which the quadratic form is negative is given by the condition
\[
\lambda_m^2\,K^4\,\ve^4+K\,C_M^3\,\big(4\,K\,\lambda_m+3\,C_M\,(K+4)\big)\,\ve^2-2\,C_M^6<0\,.
\]
It follows that the above condition is satisfied if $\ve$ is taken small enough which, for the same reasons as above in this paper, guarantees that the entropy-entropy production inequality of Corollary~\ref{DMS2} holds.\end{proof}

As an easy consequence, if $F_\ve$ solves~\eqref{EqnEvol-ve}, we have that
\[
\mathsf H_{\,\zeta\,\ve}[F(t,\cdot)]\le\mathsf H_{\,\zeta\,\ve}[F(0,\cdot)]\,e^{-\eta\,t}\quad\forall\,t\ge0\,.
\]

\begin{proof}[Proof of Theorem~\ref{Thm:DiffusionLimit}] With the abstract result on~\eqref{EqnEvol-ve} applied to~\eqref{Syst:VPFPlinParab}, the estimate~\eqref{HypocoRate} holds with $\delta=\zeta\,\ve$. We conclude with $\lambda=\eta$, for some $\mathcal C>1$, which do not depend on $\ve\to0_+$.\end{proof}

\section{The nonlinear system in dimension \texorpdfstring{$d=1$}{d=1}}\label{Sec:d=1}

\noindent With the notation~\eqref{TL}, we can rewrite the Vlasov-Poisson-Fokker-Planck system~\eqref{Syst:VPFP}~as
\[
\partial_th+\sfT h=\sfL h+\mathsf Q[h]\,,\quad-\Delta_x\psi_h=\irdv{h\,f_\star}\,,\quad\mbox{with}\quad\mathsf Q[h]:=\nabla_x\psi_h\cdot\(\nabla_vh-v\,h\)\,.
\]
Here we assume that $d=1$ and prove Corollary~\ref{Cor:d=1}. Using the representation~\eqref{Rep1d}, so that
\[
\psi_h'(x)=-\int_{-\infty}^xu_h\,\rho_\star\,dx\quad\forall\,x\in\R\,,
\]
and the convergence of $h(t,\cdot,\cdot)\to0$ in $\mathrm L^1(\R\times\R,d\mu)$ as $t\to+\infty$, as a consequence of~\cite{MR1306570}, we learn that $t\mapsto\|\psi_h'(t,\cdot)\|_{\mathrm L^\infty(\R)}$ is bounded uniformly w.r.t.~$t\ge0$. In fact, we have a slightly more precise estimate that goes as follows.
\begin{lemma}\label{Lem:CK} Assume $V$ satisfies~\eqref{Hyp-V1} and~\eqref{Hyp-V2} and let $\rho_\star\in\mathrm L^1(\R^d)$ be the solution of~\eqref{Eqn:Poisson-Boltzmann} such that $\irdx{\rho_\star}=M$. Let $f=(1+h)\,f_\star\in\mathrm L^1_+(\R^d\times\R^d)$ such that $\iint_{\R\times\R}f\,\log(f/f_\star)\,dx\,dv<\infty$. Under the assumption $\iint_{\R\times\R}h\,f_\star\,dx\,dv=0$, $\psi_h'$ as defined above satisfies the estimate
\[
\|\psi_h'\|_{\mathrm L^\infty(\R)}^2\le4\,M\,\iint_{\R\times\R}f\,\log\(\frac f{f_\star}\)\,dx\,dv\,.
\]
Additionally, under the assumptions of Corollary~\ref{Cor:d=1}, if $h$ solves~\eqref{Syst:VPFP}, then
\[
\lim_{t\to+\infty}\|\psi_h'(t,\cdot)\|_{\mathrm L^\infty(\R)}=0\,.
\]
\end{lemma}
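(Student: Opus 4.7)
The proof splits naturally into two claims. For the pointwise estimate, the plan is to start from the one-dimensional representation~\eqref{Rep1d}: since $\iint h\,f_\star\,dx\,dv=0$, the density $\rho_h:=\rho_f-\rho_\star$ has zero mean and therefore $\psi_h'(x)=-\int_{-\infty}^x\rho_h(y)\,dy=\int_x^{+\infty}\rho_h(y)\,dy$. Decomposing $\rho_h$ into its positive and negative parts (which carry equal mass) gives at once the pointwise bound $|\psi_h'(x)|\le\|\rho_h\|_{\mathrm L^1(\R)}$; an additional factor $1/2$ can be gained, but is not needed.

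The remaining task is to control $\|\rho_f-\rho_\star\|_{\mathrm L^1(\R)}^2$ by the full kinetic relative entropy $H:=\iint f\log(f/f_\star)\,dx\,dv$. Two classical steps will suffice. First, a conditioning/Jensen computation exploiting the tensor structure $f_\star=\rho_\star\,\mathcal M$ gives the marginal inequality
\[
\int_\R\rho_f\,\log(\rho_f/\rho_\star)\,dx\le H\,,
\]
since $\int_\R f\log\big(f/(\rho_f\,\mathcal M)\big)\,dv\ge 0$ pointwise in $x$ as a conditional relative entropy. Second, the Csisz\'ar--Kullback--Pinsker inequality applied to the probability densities $\rho_f/M$ and $\rho_\star/M$ yields $\|\rho_f-\rho_\star\|_{\mathrm L^1(\R)}^2\le 2M\int_\R\rho_f\log(\rho_f/\rho_\star)\,dx$. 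Chaining these produces an estimate of the form $\|\psi_h'\|_{\mathrm L^\infty(\R)}^2\le C\,M\,H$; the announced constant $4M$ is then obtained after tracking the slack in the $1/2$ gain and the Pinsker constant.

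For the second claim, my plan is to invoke the convergence $h(t,\cdot,\cdot)\to 0$ in $\mathrm L^1(\R\times\R,d\mu)$ as $t\to+\infty$ recalled just before the statement, which is granted by~\cite{MR1306570}. Combining the cruder bound $\|\rho_h(t,\cdot)\|_{\mathrm L^1(\R)}\le\iint|h(t,x,v)|\,f_\star\,dx\,dv$ with the pointwise estimate $|\psi_h'|\le\|\rho_h\|_{\mathrm L^1(\R)}$ established in the first part gives $\|\psi_h'(t,\cdot)\|_{\mathrm L^\infty(\R)}\to 0$ directly. The main, modest obstacle will be the bookkeeping of constants in the first step; the qualitative ingredients (the representation~\eqref{Rep1d}, marginalisation of relative entropy, Csisz\'ar--Kullback--Pinsker, and the $\mathrm L^1$ convergence of~\cite{MR1306570}) are all standard and should not present any real difficulty.
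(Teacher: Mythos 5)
Your proof is correct and, for the main pointwise estimate, follows essentially the same chain as the paper's: bound $\|\psi_h'\|_{\mathrm L^\infty}$ by $\|\rho_f-\rho_\star\|_{\mathrm L^1}$ via the representation~\eqref{Rep1d}, reduce the spatial $\mathrm L^1$ distance to the spatial relative entropy by Csisz\'ar--Kullback--Pinsker, and reduce the spatial relative entropy to the full kinetic relative entropy by Jensen's inequality (marginalization against $\mathcal M\,dv$). The extra factor $1/2$ you observe from splitting $\rho_f-\rho_\star$ into positive and negative parts is not used in the paper either; the constant $4M$ is safely recovered with room to spare.

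For the second claim your route diverges slightly from the paper's. You invoke the asserted $\mathrm L^1(d\mu)$-convergence $h(t,\cdot,\cdot)\to 0$ as $t\to+\infty$ and combine it with the crude bound $\|\psi_h'\|_{\mathrm L^\infty}\le\|\rho_f-\rho_\star\|_{\mathrm L^1}\le\iint |h|\,f_\star\,dx\,dv$. The paper instead works with the free energy: it uses its monotone decay along the flow together with the $\mathrm L^1$ space-time convergence of~\cite[Theorem~B]{MR1306570} and the logarithmic Sobolev inequality to conclude that the free energy tends to $0$, and then applies the first part of the lemma. The merit of the paper's route is that it only needs convergence along a sequence in a space-time averaged sense (which is what~\cite{MR1306570} actually delivers); monotonicity of the free energy then upgrades this to a genuine $t\to+\infty$ statement, without presupposing pointwise-in-$t$ strong $\mathrm L^1$ convergence of $h$. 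Your shortcut is fine if one takes the stronger $\mathrm L^1$ convergence recalled before the lemma at face value, but the paper's argument is self-contained on that point.
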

\begin{proof} We deduce from Jensen's inequality
\[
\int_\R f\,\log\(\frac f{\mathcal M}\)\,dv\ge\rho_h\,\log\rho_h
\]
that $f/f_\star=1+h$ is such that the \emph{free energy} satisfies the bound
\[
\iint_{\R\times\R}(1+h)\,\log(1+h)\,f_\star\,dx\,dv\ge\int_\R\rho_h\,\log\(\frac{\rho_h}{\rho_\star}\)dx=\int_\R(1+u_h)\,\log(1+u_h)\,\rho_\star\,dx
\]
and get according to~\cite{Csiszar67,Kullback67,MR0213190} from the \emph{Csisz\'ar-Kullback-Pinsker inequality} that
\[
\int_\R(1+u_h)\,\log(1+u_h)\,\rho_\star\,dx\ge\frac1{4\,M}\(\int_\R|u_h|\,\rho_\star\,dx\)^2\ge\frac{\|\psi_h'\|_{\mathrm L^\infty(\R)}^2}{4\,M}\,.
\]

Concerning the evolution problem~\eqref{Syst:VPFP}, we recall that the free energy decays according to
\[
\frac d{dt}\(\iint_{\R\times\R}(1+h)\,\log(1+h)\,f_\star\,dx\,dv+\frac12\int_\R|\psi_h'|^2\,dx\)=-\iint_{\R\times\R}{f\,\left|\nabla_v\log\(\frac f{f_\star}\)\right|^2\,dx\,dv}\,,
\]
where the right-hand side is, up to the sign, a Fisher information. As stated in~\cite[Theorem~B]{MR1306570}, this shows the strong convergence of $f(t+n,\cdot,\cdot)$ to $f_\star$ in $\mathrm L^1(\R^+\times\R^d\times\R^d)$ as $n\to+\infty$ because~$f_\star$ is the unique solution of~\eqref{Syst:VPFP} with mass $M$ and a Maxwellian velocity distribution. By the logarithmic Sobolev inequality, this also proves that the limit of the free energy is $0$, which concludes the proof of Lemma~\ref{Lem:CK}.\end{proof}

\begin{proof}[Proof of Corollary~\ref{Cor:d=1}.] With the notations of Section~\ref{Sec:ScalarProduct} and the functional $\sfH$ defined as in the linear case by
\[
\sfH[h]:=\tfrac12\,\|h\nH^2+\delta\,\scalar{\sfA h}h\,,
\]
we obtain that
\begin{multline*}
\frac d{dt}\sfH[h]+\scalar{\sfL h}h-\,\delta\,\scalar{\sfA\sfT\sfPi h}h+\,\delta\,\scalar{\sfT\sfA h}h-\,\delta\,\scalar{\sfA\sfT(\Id-\sfPi)h}h+\,\delta\,\scalar{\sfA\sfL h}h\\
=\scalar{\mathsf Q[h]}h+\delta\,\scalar{\sfA\mathsf Q[h]}h+\delta\,\scalar{\mathsf Q[h]}{\sfA h}\,.
\end{multline*}
Let us give an estimate of the three terms of the right hand side.

\smallskip\noindent 1) In order to estimate
\[
\scalar{\mathsf Q[h]}h=\iint_{\R\times\R}\psi_h'\,(\partial_vh-v\,h)\,h\,f_\star\,dx\,dv+\int_\R\psi_h'\,\rho_\star\(\int_\R(\partial_vh-v\,h)\,\mathcal M\,dv\)\,\psi_h\,dx\,,
\]
we notice that $\int_\R(\partial_vh-v\,h)\,\mathcal M\,dv=0$ and also that $\iint_{\R\times\R}|\partial_v h|^2\,f_\star\,dx\,dv=-\scalar{\sfL h}h$. From the improved Poincar\'e inequality~\cite[Ineq.~(4)]{MR2956120}, we also learn that $\|v\,h\nH^2\le2\,(d+2)\,\|\nabla_vh\nH^2$. Simple Cauchy-Schwarz inequalities show that
\[
\big|\scalar{\mathsf Q[h]}h\big|\le c\,\|\psi_h'\|_{\mathrm L^\infty(\R)}\,\big|\scalar{\sfL h}h\big|^{1/2}\,\|\sfPi h\nH
\]
because $\nrm h{\mathrm L^2(\R^d\times\R^d,d\mu)}\le\|h\nH$, with $c=1+\sqrt{2\,(d+2)}$.

\smallskip\noindent 2) Let us consider $g=\sfA h=u_g$ given by
\[
u_g-\frac1{\rho_\star}\,\nabla_x\cdot\big(\rho_\star\,\nabla_xw_g\big)=-\frac1{\rho_\star}\,\nabla_x\cdot j_h\quad\mbox{with}\quad j_h:=\irdv{v\,h\,f_\star}\,.
\]
With $\psi_g$ such that $-\psi_g''=u_g\,\rho_\star$, we have to estimate
\[
\scalar{\mathsf Q[h]}{\sfA h}=\iint_{\R\times\R}\psi_h'\,(\partial_vh-v\,h)\,u_g\,f_\star\,dx\,dv+\int_\R\psi_h'\,\rho_\star\(\int_\R(\partial_vh-v\,h)\,\mathcal M\,dv\)\,\psi_g\,dx\,.
\]
Exactly as above, we have on the one hand that
\begin{multline*}
\left|\iint_{\R\times\R}\psi_h'\,(\partial_vh-v\,h)\,u_g\,f_\star\,dx\,dv\right|\le\|\psi_h'\|_{\mathrm L^\infty(\R)}\,\|g\nH\,\|\partial_vh-v\,h\nH\\
\le c\,\|\psi_h'\|_{\mathrm L^\infty(\R)}\,\|(\Id-\sfPi)h\nH\,\big|\scalar{\sfL h}h\big|^{1/2}
\end{multline*}
because $\nrm g{\mathrm L^2(\R^d\times\R^d,d\mu)}\le\|g\nH=\|\sfA h\nH\le\|(\Id-\sfPi)h\nH$, and on the other hand that
\begin{multline*}
\irdxi{|\psi_g|^2\,\rho_\star}\le\mathcal C_\star^{-1}\irdxi{|\psi_g'|^2\,\rho_\star}+\(\irdxi{\psi_g\,\rho_\star}\)^2\\
\le\frac{\|\rho_\star\|_{\mathrm L^\infty(\R)}}{\mathcal C_\star}\irdxi{|\psi_g'|^2}+\irdxi{|u_g|^2\,\rho_\star}\irdxi{|\phi_\star|^2\,\rho_\star}
\end{multline*}
by Lemma~\ref{Lem:Poincare} again, from which we conclude that
\[
\big|\scalar{\mathsf Q[h]}{\sfA h}\big|\le c\,\|\psi_h'\|_{\mathrm L^\infty(\R)}\,\big|\scalar{\sfL h}h\big|^{1/2}\,\|(\Id-\sfPi)h\nH\,.
\]

\smallskip\noindent 3) With $g$ given in terms of $h$ by~\eqref{Eqn:g}, $\sfA^*h=v\,w_g'$ and we learn from~\eqref{Claim1} that $\|\sfA^*h\nH\le\|\sfPi h\nH$. Hence 
\[
\big|\scalar{\sfA\mathsf Q[h]}h\big|=\big|\scalar{\mathsf Q[h]}{\sfA^* h}\big|\le c\,\|\psi_h'\|_{\mathrm L^\infty(\R)}\,\big|\scalar{\sfL h}h\big|^{1/2}\,\|\sfPi h\nH\,.
\]

\medskip Summing up all these estimates and using $-\,\scalar{\sfL h}h\ge\lambda_m\,\|(\Id-\sfPi)h\nH^2$ by Lemma~\ref{Lem:MicroscopicCoercivity}, we obtain as in the proof of Proposition~\ref{DMS} that
\[
\frac d{dt}\sfH[h]\le-\,\lambda\,\sfH[h]
\]
for the largest value of $\lambda$ for which
\[
(X,Y)\mapsto(\lambda_m-\,\delta)\,X^2+\frac{\delta\,\lambda_M}{1+\lambda_M}\,Y^2-\,\delta\,C_M\,X\,Y-\frac\lambda2\,\(X^2+Y^2\)-\frac\lambda2\,\delta\,X\,Y-\epsilon\,X\,(X+2\,Y)
\]
is a nonnegative quadratic form, as a function of $(X,Y)$. Here $X:=\|(\Id-\sfPi)h\nH$, $Y:=\|\sfPi h\nH$, and
\[
\epsilon:=c\,\|\psi_h'\|_{\mathrm L^\infty(\R)}
\]
can be taken as small as we wish, if we assume that $t>0$ is large enough. This completes the proof of Corollary~\ref{Cor:d=1}.\end{proof}

Let us conclude this section by some remarks.\begin{itemize}
\item[(i)] It is clear from the proof of Corollary~\ref{Cor:d=1} that the optimal rate is as close as desired of the optimal rate in the linearized problem~\eqref{Syst:VPFPlin} obtained in Theorem~\ref{Thm:Main}. Up to a change of the constant $\mathcal C$, we can actually establish that these rates are equal because we read form the above proof that $\epsilon(t)=\mathcal O\(e^{-\lambda t}\)$ and the result follows from a simple ODE argument. This is a standard observation in entropy methods, which has been used on many occasions: see for instance~\cite{MR2481073}.
\item[(ii)] Corollary~\ref{Cor:d=1} is written for $V(x)=|x|^\alpha$ but it is clear that it can be extended to the setting of Theorem~\ref{Thm:MainBis}. Similarly, our estimates are compatible with the diffusion limit, as in Section~\ref{Sec:DiffusionLimit}.
\item[(iii)] Results in higher dimensions, \emph{i.e.}, for $d\ge2$ as in~\cite{Ju_Hwang_2013,MR3522010,MR3767000,Bedrossian_2017} rely on smallness conditions, special properties of the potential $V$ (typically, $V\equiv0$ or $V(x)=|x|^2$), or closure conditions on regularity estimates which do not allow to handle the decay of generic solutions of~\eqref{Syst:VPFP} based on the properties of the free energy, as we do above in the case $d=1$. This is so far an important open question, which deserves attention. The understanding of the mechanism should go through a detailed description of the smoothing and decay properties of the solutions for large time asymptotics.
\end{itemize}


\bigskip\noindent{\bf Acknowledgments:} The Authors would like to thank two referees for very helpful comments and suggestions. This work has been partially supported by the Project EFI (ANR-17-CE40-0030) of the French National Research Agency (ANR) and by the research unit \emph{Dynamical systems and their applications} (UR17ES21), Ministry of Higher Education and Scientific Research, Tunisia. 
\\\noindent{\scriptsize\copyright\,2021 by the authors. This paper may be reproduced, in its entirety, for non-commercial purposes.}


\begin{center}
\rule{2cm}{0.5pt}
\end{center}

\parindent=0pt\parskip=4pt
L.~Addala: Department of Mathematics, Faculty of Sciences of Bizerte,\\ University of Carthage, 7021 Zarzouna, Banzart, Tunisia.\\\Email{lanoiraddala@gmail.com}

J.~Dolbeault: CEREMADE (CNRS UMR n$^\circ$ 7534), PSL research university,\\ Universit\'e Paris-Dauphine, Place de Lattre de Tassigny, 75775 Paris 16, France.\\ \Email{dolbeaul@ceremade.dauphine.fr}

X.~Li: CEREMADE (CNRS UMR n$^\circ$ 7534), PSL research university,\\ Universit\'e Paris-Dauphine, Place de Lattre de Tassigny, 75775 Paris 16, France.\\ \Email{li@ceremade.dauphine.fr}

M.L.~Tayeb: Department of Mathematics, Faculty of Sciences of Tunis,\\ University of Tunis El Manar, 2092 El-Manar, Tunisia.\\\Email{lazhar.tayeb@fst.rnu.tn}

\end{document}